\numberwithin{equation}{section}
\newcommand{\pa}{\partial}
\newcommand{\eps}{\varepsilon}
\newcommand{\bra}[1]{\left(#1\right)}
\newcommand{\R}{\mathbb{R}}
\newcommand{\intR}{\int_{\R^N}}
\newcommand{\intQT}{\int_{\tau}^T\intR}
\newcommand{\LO}[1]{L^{#1}(\R^N)}
\newcommand{\LQ}[1]{L^{#1}(Q_{\tau,T})}
\newcommand{\LQT}[1]{L^{#1}(Q_T)}
\newcommand{\Lxt}[2]{L^{#1}(\tau,T;L^{#2}(\R^N))}
\newcommand{\abs}[1]{\left|#1\right|}
\newcommand{\norm}[1]{\left\|#1\right\|}
\newcommand{\wt}{\widetilde}
\newcommand{\CF}{{\mathcal F}}
\newcommand{\CS}{ {\mathcal S}}
\newcommand{\N}{{\mathbb N}}
\newcommand{\ub}{ {\bm{u}}}
\newcommand{\Bnorm}[1]{\Big\|#1\Big\|}
\newtheorem{theorem}{Theorem}[section]
\newtheorem{definition}[theorem]{Definition}
\newtheorem{lemma}[theorem]{Lemma}
\newtheorem{proposition}[theorem]{Proposition}
\newtheorem{remark}[theorem]{Remark}
\newtheorem{corollary}[theorem]{Corollary}
\author[1]{Phuoc-Tai Nguyen}
\author[2]{Bao Quoc Tang\footnote{Corresponding author.}}
\affil[1]{\small Department of Mathematics and Statistics, Masaryk University, Brno, Czech Republic. \break
	\href{mailto:ptnguyen@math.muni.cz}{ptnguyen@math.muni.cz}}
\affil[2]{\small Department of Mathematics and Scientific Computing, University of Graz, Graz, Austria\break  \href{mailto:quoc.tang@uni-graz.at}{quoc.tang@uni-graz.at}}
\title{Well-Posedness for Fractional Reaction-Diffusion Systems \\ with Mass Dissipation in $\mathbb R^N$}
\date{}
\begin{document}
	\maketitle
	\begin{abstract} \noindent
		The global existence of bounded solutions to reaction-diffusion systems with fractional diffusion in the whole space $\mathbb R^N$ is investigated. The systems are assumed to preserve the non-negativity of initial data and to dissipate total mass. We first show that if the nonlinearities are at most quadratic then there exists a unique global bounded solution regardless of the fractional order. This result is achieved by combining a regularizing effect of the fractional diffusion operator and the H\"older continuity of a non-local inhomogeneous parabolic equation. When the nonlinearities might be super-quadratic, but satisfy some intermediate sum conditions, we prove the global existence of bounded solutions by adapting the well-known duality methods to the case of fractional diffusion. In this case, the order of the intermediate sum conditions depends on the fractional order. These results extend the existing theory for mass dissipated local reaction-diffusion systems to the case of fractional diffusion and unbounded domains.
	\end{abstract}
	
	\noindent{\scriptsize
		Key words:  Reaction-diffusion systems; Fractional diffusion; Duality methods; Quadratic growth; Intermediate sum conditions}
	\\{\scriptsize{MSC. 35A01, 35B65, 35K57, 35Q92, 35R11}}

	\tableofcontents
	
	\section{Introduction}\label{sec:intro}
	Reaction-diffusion systems that preserve non-negativity of initial data and dissipate (or conserve) the total mass have been investigated extensively in the last decades due to their wide applications as well as interesting mathematical structures. The recent development of novel techniques, including {the} duality method or entropy method, has enabled one to obtain the global well-posedness for a large class of such systems. Yet most of the existing works have dealt with the case of local diffusion, i.e. the diffusion represented by the classical Laplacian, and {have been considered in} bounded domains. In this paper, we extend the existing theory to reaction-diffusion systems with fractional diffusion in the entire space $\mathbb R^N$. Our work provides techniques to study systems with quadratic nonlinearities, and develop a duality method for fractional diffusion problem in $\mathbb R^N$, which can be of independent interest. 
	\subsection{Problem setting and state of the art}
	We study in this work the following reaction-diffusion system with non-local diffusion for the concentrations (or densities) $\ub = (u_1, \ldots, u_m): \R^N\times [0,\infty) \to \R^m$
	\begin{equation}\label{sys}
		\begin{cases}
			\pa_t u_i + d_i(-\Delta)^{\alpha}u_i = f_i(x,t,\ub), &x\in\R^N, t>0, i=1,\ldots, m,\\
			u_i(x,0) =u_{i,0}(x), &x\in\R^N, i=1,\ldots, m.
		\end{cases}
	\end{equation}
	Here $d_i>0$ are the diffusion coefficients; $u_{i,0}$ are given non-negative initial data; the operator $(-\Delta)^{\alpha}$, $\alpha \in (0,1)$, is the fractional Laplace operator defined as
	\begin{equation}\label{frac_lap}
		(-\Delta)^{\alpha}u(x):= C_{N,\alpha} \mathrm{P.V.}\int_{\mathbb R^N}\frac{u(x) - u(y)}{|x-y|^{N+2\alpha}}dy,\qquad  x\in \mathbb R^N,
	\end{equation}
	where $\mathrm{P.V.}$ stands for the principle value and $C_{N,s}$ is a positive constant depending only on $N,\alpha$. The nonlinearities $f_i: \R^N\times \R_+ \times \R^m \to \R$, with $\R_+:= [0,\infty)$, $i=1,\ldots,m$, satisfy
	\begin{enumerate}[ref=\textbf{F}, label=(\textbf{F}), align=left]
		\item\label{F} (continuity) for any $i=\{1,\ldots, m\}$, $f_i(\cdot,\cdot,\ub)$ is $C^{\beta}$ in $x$ ($\beta\in (0,1)$)  and continuous in $t$, for fixed $\ub$, and $f_i$ is locally Lipschitz continuous in $\ub$ uniformly in $(x,t)\in \R^N\times \R_+$,
	\end{enumerate}
	\begin{enumerate}[ref=\textbf{P}, label=(\textbf{P}), align=left]
		\item\label{P} (quasi-positivity) for any $i\in \{1,\ldots, m\}$,
		\begin{equation*}
			f_i(x,t,\ub)\ge 0 \quad \text{ for all } \quad x\in\R^N, t\ge 0, \ub\in \R_+^m\; \text{ with }\; u_i = 0,
		\end{equation*}
	\end{enumerate}
	\begin{enumerate}[ref=\textbf{M}, label=(\textbf{M}), align=left]
		\item\label{M} (mass dissipation) for any $i\in \{1,\ldots, m\}$,
		\begin{equation*}	
			\sum_{i=1}^mf_i(x,t,\ub)\le 0, \quad \forall x\in \R^N, t\ge 0, \ub\in \mathbb R_+^m.
		\end{equation*}
	\end{enumerate}
	
	The moderate continuous assumption \eqref{F} is to ensure the existence of a local solution. In case where $f_i$ depends explicitly only on $\ub$, the regularity assumptions on $x$ and $t$ are automatically satisfied (see Remark \ref{remark1}). The quasi-positivity assumption \eqref{P} guarantees the non-negativity of solutions, as long as the initial data are non-negative. The simple physical interpretation of this is that if a concentration is zero then it cannot not be consumed/used in a reaction/interaction. The third condition \eqref{M} means that the total mass is non-increasing in time, which can be seen easily by summing the equations in \eqref{sys} and integrating over $\R^N$ (provided a suitable solution which decays as $|x|\to\infty$). These assumptions are natural and appear in many models ranging from natural sciences including physics, chemistry, biology to social and life sciences. Therefore the study of reaction-diffusion systems under these assumptions has attracted a lot of attention, especially in the last decades. When there is no diffusion, i.e. \eqref{sys} reduces to a system of differential equations $\dot u_i = f_i(\ub)$, it is straightforward that, under \eqref{F}, \eqref{P} and \eqref{M}, there exists a global solution which is bounded uniformly in time. This becomes more challenging when the diffusion is taken into account, partially because the maximum and/or comparison principle no longer holds for systems in general. In the case { where} the diffusion operator is local, i.e. it is represented by the classical Laplacian, global existence for systems of type \eqref{sys} has been addressed in the early works \cite{alikakos1979lp,masuda1983global,haraux1988result} for $m = 2$ and later extended in \cite{hollis1987global,morgan1989global,Mor90} to arbitrary $m$ under additional assumptions on the nonlinearities, besides \eqref{P} and \eqref{M}. Interestingly, it was pointed out in \cite{PS00} that in general conditions \eqref{P} and \eqref{M} are not sufficient to prevent solutions from blowing up, in sup-norm, at finite time. When \eqref{sys} satisfies an additional entropy condition, i.e. $\sum_{i=1}^m f_i(\ub)\ln u_i \le 0$, \cite{GV10,CV09} showed that global bounded solutions exist for quadratic nonlinearities if $N\le 2$ and for strictly sub-quadratic nonlinearities for all $N\ge 3$. The case of quadratic nonlinearities in all dimensions has been recently solved in the recent works \cite{Sou18,CGV19,fellner2020globalA}, {the last of which does not impose} the entropy condition. It should be also remarked that the case of mass conservation in the {entire space} was treated in \cite{kanel2000global}. This quadratic growth turns out to be sharp, i.e. for any $\eps>0$ there are systems with nonlinearities growing as $|\ub|^{2+\eps}$ satisfying \eqref{P} and \eqref{M} to which the solution becomes unbounded at a finite time. Therefore, when the nonlinearities are super-quadratic, additional assumptions such as intermediate sum conditions should be imposed, see e.g. \cite{morgan2020boundedness,fitzgibbon2021reaction}. While most of these works have dealt with the case of bounded domains, some have also considered the case of the entire space $\R^N$ including \cite{kanel1984cauchy,Kan90,herrero1998global,GV10,CV09,CGV19,la2024uniform}. In many situations, the diffusion might be of non-local nature, for instance random walks with long jumps, to which the associated operator is the fraction Laplacian $(-\Delta)^{\alpha}$, for some $\alpha \in (0,1)$, defined in \eqref{frac_lap}, instead of the classical Laplacian $-\Delta$. Due to its intriguing mathematical properties, the study of PDE models with fractional, or in general non-local, diffusion has flourished with numerous fundamental works, see e.g. \cite{FLS2016,caffarelli2010drift,caffarelli2011regularity} and many others. The effect of the non-local diffusion in reaction-diffusion systems has also been investigated in e.g. \cite{berestycki2015effect,han2023propagation,zhang2024bistable}. These works focus on propagation phenomenon and the systems therein satisfy comparison principles, which makes global existence of bounded solutions easy to obtain. The well-posedness for general fractional reaction-diffusion systems satisfying \eqref{F}, \eqref{P} and \eqref{M} has been under-explored, except for the recent works \cite{alsaedi2018triangular, daoud2024class, laurenccot2023nonlocal}. More precisely,  \cite{alsaedi2018triangular} studied a $2\times 2$ system with triangular fractional diffusion, \cite{daoud2024class} considered the case of bounded domains and some linear intermediate sum condition (i.e. \eqref{ISC} below with $\rho = 1$), and \cite{laurenccot2023nonlocal} dealt with a special $2\times 2$ Gray-Scott system and investigated the diffusive limit $\alpha \to 1$.
	
	\medskip
	In this paper, we develop an adequate theory for reaction-diffusion systems with fractional diffusion \eqref{sys} satisfying the natural assumptions \eqref{F}, \eqref{P} and \eqref{M}. More precisely, we show that systems with quadratic nonlinearities have global bounded solutions regardless of the fractional order $\alpha \in (0,1)$. When the nonlinearities might be super-quadratic, we impose some nonlinear intermediate sum conditions and obtain global bounded solutions by developing {a duality method} for fractional diffusion in the entire space. Note that in both cases, we also prove that the solution is bounded uniformly in time, which would be helpful in studying the large time dynamics of the system.

	\subsection{Main results and key ideas}
	We start with the notion of solutions that we consider in this work. In the sequel of this paper, we will frequently consider the initial time $\tau\ge 0$, which is convenient for proving the uniform-in-time bounds of solutions later on. 
		
		\begin{definition}\label{def:solution}
			Assume $p \geq  1$, $0 \leq \tau<T$ and $\ub_\tau=(u_{1,\tau},\ldots,u_{m,\tau}) \in (L^p(\R^N))^m$.	A vector of functions $\ub = (u_1,\ldots, u_m): \R^N\times [\tau,T)$ is called a mild solution to system 
			\begin{equation}\label{sys-tau}
				\begin{cases}
					\pa_t u_i + d_i(-\Delta)^{\alpha}u_i = f_i(x,t,\ub), &(x,t) \in {Q_{\tau,T}:=\R^N \times (\tau,T)}, \; i=1,\ldots, m,\\
					u_i(x,\tau) =u_{i,\tau}(x), &x\in\R^N, i=1,\ldots, m.
				\end{cases}
			\end{equation}
			if $\ub \in (C([\tau,T];L^p(\R^N)))^m$, $f_i(x,t,\ub(x,t))\in L^1((\tau,T);L^p(\R^N))$ and
			\begin{equation} \label{sys:Duhamel}
				u_i(t) = S_{i,\alpha}(t-\tau)u_{i,\tau} + \int_\tau^tS_{i,\alpha}(t-s)f_i(x,s,\ub(s))ds, \quad t \in (\tau,T), \quad i=1,\ldots,m,
			\end{equation}
			where $\{S_{i,\alpha}(t)\}_{t\geq \tau}$ is the semigroup generated by the species-dependent fractional operator $d_i(-\Delta)^{\alpha}$. Here in formula \eqref{sys:Duhamel}, we ignore the dependence of $u_i$ on $x$. We say that this solution is non-negative in $Q_{\tau,T}$ if $u_i(x,t)\ge 0$ for a.e. $x\in \R^N$, $t\geq \tau$ and all $i=1,\ldots, m$.
	\end{definition}
	
	Our first main result is the global existence and boundedness of solutions when the nonlinearities are of quadratic growth.
	\begin{theorem}[Quadratic growth rates]\label{thm:quadratic}
		Assume \eqref{F}, \eqref{P}, \eqref{M} and nonlinearities have at most quadratic growth rates, i.e. there exists a constant $C>0$ such that
		\begin{equation}\label{qgr} 
			|f_i(x,t,\ub)| \le C\bra{1+|\ub|^2} \quad  \forall (x,t)\in \R^N\times \R_+,\;\forall \ub\in \R_+^m, \quad \forall i=1,\ldots, m.
		\end{equation}
		Then for any non-negative initial data $\ub_0 = (u_{1,0},\ldots,u_{m,0}) \in (\LO{1}\cap \LO{\infty})^m$, there exists a unique global non-negative mild solution to \eqref{sys} which is bounded uniformly in time, i.e.
		\begin{equation*} 
			\limsup_{t\to\infty}\sup_{i=1,\ldots, m}\|u_i(t)\|_{\LO{\infty}} < +\infty.
		\end{equation*}
	\end{theorem}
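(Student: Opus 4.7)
My plan is to combine the $L^p$-$L^q$ smoothing estimates for the fractional heat semigroups $S_{i,\alpha}(t)$ with a bootstrap that upgrades the $L^1(\R^N)$ bound coming from \eqref{M} to higher Lebesgue integrability, and close the argument with a H\"older regularity estimate for the non-local inhomogeneous parabolic equation. First I would establish local well-posedness: a Banach fixed-point argument applied to the Duhamel formula \eqref{sys:Duhamel} in a ball of $C([\tau,\tau+T_0];(\LO{1}\cap\LO{\infty})^m)$ with $T_0>0$ small, using the locally Lipschitz property in \eqref{F} together with the semigroup bound $\norm{S_{i,\alpha}(t)v}_{\LO{q}}\le c\,t^{-\frac{N}{2\alpha}(\frac{1}{p}-\frac{1}{q})}\norm{v}_{\LO{p}}$, yields a unique local mild solution on a maximal interval. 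Non-negativity on this interval follows from the quasi-positivity \eqref{P} via a truncation of the nonlinearities combined with the positivity of the fractional heat kernel, and uniqueness is standard by Gr\"onwall.

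Next I would derive a time-uniform $L^1$ bound: summing \eqref{sys-tau} over $i$ and integrating in $x$ gives, thanks to \eqref{M},
$$ \frac{d}{dt}\sum_{i=1}^m\norm{u_i(t)}_{\LO{1}} = \sum_{i=1}^m\intR f_i(x,t,\ub(x,t))\,dx \le 0, $$
hence $\sum_i\norm{u_i(t)}_{\LO{1}}\le\sum_i\norm{u_{i,0}}_{\LO{1}}$ throughout the maximal interval. The heart of the proof is then to upgrade this $L^1$ bound to $L^\infty$. Setting $U:=\sum_i u_i\ge 0$, condition \eqref{M} produces the scalar inequality
$$ \pa_t U + (-\Delta)^{\alpha}(DU) \le 0, \qquad D(x,t):=\frac{1}{U(x,t)}\sum_{i=1}^m d_i u_i(x,t)\in[\min_i d_i,\max_i d_i]. $$
A Kato/duality argument against a dual fractional parabolic equation with bounded coefficient $D$ extracts an $L^{p_0}(Q_{\tau,T})$ bound on $U$ for some $p_0>1$; inserted into \eqref{qgr} this yields $f_i\in L^{p_0/2}(Q_{\tau,T})$, which fed back into \eqref{sys:Duhamel} together with the semigroup smoothing estimate improves the space-time integrability of each $u_i$. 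A finite number of iterations delivers $f_i\in L^r(Q_{\tau,T})$ for some $r>\max\{1,\tfrac{N}{2\alpha}\}$.

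At that level of integrability, the H\"older regularity theory for the non-local parabolic problem $\pa_t w + d_i(-\Delta)^{\alpha} w = f_i$ (of Caffarelli-Silvestre-Vasseur type) produces $C^\gamma$, and hence $L^\infty$, bounds on $u_i$, excluding blow-up in sup-norm and giving global existence. The uniform-in-time boundedness is then obtained by running the same scheme on every unit time slab $[t_0,t_0+1]$ with $t_0\ge 0$: the smoothing estimate $\norm{S_{i,\alpha}(1)v}_{\LO{\infty}}\le c\norm{v}_{\LO{1}}$ reduces the bound on the slab to control of the $L^1$ mass at its left endpoint, which is bounded uniformly in $t_0$ by the non-increasing property of the total mass, so the resulting $L^\infty$ bound is $t_0$-independent.

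The principal obstacle, compared to the bounded-domain local-diffusion literature, is the combination of an unbounded spatial domain with a non-local operator: there is no Poincar\'e inequality to convert the $L^1$ mass estimate into higher $L^p$ control through energy dissipation, the classical Moser iteration does not close up in $\R^N$, and the fractional operator couples the equation globally in space. The argument therefore leans on (i) the duality structure that \eqref{M} grants to the scalar quantity $U$, which compensates for the missing Poincar\'e inequality, and (ii) whole-space H\"older regularity for non-local parabolic equations, which serves as the substitute for the $L^\infty$ endpoint estimates that would close a Moser iteration; getting the constants in both to be translation-invariant in time is what allows the bound to be uniform in $t$.
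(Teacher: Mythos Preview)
Your bootstrap does not close for quadratic nonlinearities when $\alpha$ is small relative to $N$, and this is precisely the regime the theorem is claiming to cover. The improved duality estimate on $\partial_t U+(-\Delta)^\alpha(DU)\le 0$ gives at best $U\in L^{p_0}(Q_{\tau,T})$ with $p_0=2+\eps_0$ for some small $\eps_0>0$. With the quadratic bound \eqref{qgr} this yields only $f_i\in L^{p_0/2}(Q_{\tau,T})$. Feeding this into the fractional heat regularisation $L^p\to L^q$ with $q=\frac{(N+2\alpha)p}{N+2\alpha-2\alpha p}$ and $p=p_0/2$ gives
\[
q_1=\frac{(N+2\alpha)p_0/2}{N+2\alpha-\alpha p_0},
\]
and $q_1>p_0$ is equivalent to $p_0>\frac{N}{2\alpha}+1$. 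Since $p_0$ is only slightly above $2$, this fails whenever $\alpha\le N/2$, i.e.\ for all $N\ge 2$ and every $\alpha\in(0,1)$, and for $N=1$ with $\alpha\le 1/2$. In those cases the iteration moves \emph{downward}, not upward, and you never reach an $L^r$ level with $r>N/(2\alpha)$ for the source term. This is exactly why, in the paper, the duality/bootstrap route is reserved for the sub-quadratic intermediate sum condition (Theorem~\ref{thm:ISC}) and is \emph{not} used to prove Theorem~\ref{thm:quadratic}.

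The paper's proof avoids the loss of integrability by working not with $U=\sum_i u_i$ but with its time integral
\[
v(x,t)=\int_\tau^t\sum_{i=1}^m d_i u_i(x,s)\,ds,
\]
which, after summing the equations and integrating only in $t$, solves the \emph{non-divergence} fractional parabolic equation
\[
b(x,t)\,\partial_t v+(-\Delta)^\alpha v=\sum_i u_{i,\tau},\qquad b=\frac{\sum_i u_i}{\sum_i d_i u_i}\in\Big[\tfrac{1}{\max d_i},\tfrac{1}{\min d_i}\Big].
\]
The crucial gain is that the quadratic nonlinearities have disappeared from the right-hand side: $v$ sees only the initial data. A De~Giorgi argument (adapted from Caffarelli--Chan--Vasseur to the non-divergence setting with bounded inhomogeneity) then gives $v\in C^\gamma_{x,t}$ with a constant \emph{independent of} $\|u_i\|_{L^\infty}$. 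This H\"older bound is fed into an interpolation scheme: writing $\sum_i u_i \le U_0+|(-\Delta)^\alpha v|$ and estimating $\|(-\Delta)^\alpha v\|_{L^\infty}$ through $\|(-\Delta)^{\alpha/2}v\|_{L^\infty}^{1/2}\|(-\Delta)^{\alpha/2}(\text{RHS of the eq.\ for }v)\|_{L^\infty}^{1/2}$, one obtains
\[
\sum_i\|u_i\|_{L^\infty(Q_{\tau,T})}\le C+C\Big(\sum_i\|u_i\|_{L^\infty(Q_{\tau,T})}\Big)^{1-\delta}
\]
for some $\delta>0$, which closes by Young's inequality. The H\"older regularity is therefore applied to the time-primitive $v$, not to $u_i$, and the source term in that equation is data, not $f_i$; this is the missing idea in your scheme.
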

	\begin{remark}\label{remark1}\hfill
		\begin{itemize}
			\item Thanks to the regularity of mild solutions (see Lemma \ref{lem:reg-sys}) and the boundedness, the solution obtained in Theorem \ref{thm:quadratic} is in fact a strong solution in the sense that $\partial_t u_i(t)$, $(-\Delta)^{\alpha}u_i$, $f_i(\cdot, t, u_i(t)) \in C^{\beta}(\R^N)$ for some $\beta\in (0,1)$ for a.e. $t\in (0,\infty)$ and the equations in \eqref{sys} are satisfied pointwise.
			\item The quadratic growth rate \eqref{qgr} can be slightly improved in the sense that there exists a small positive constant $\eps$ depending on $\alpha$, $N$, $m$, and diffusion coefficients $d_i$ such that the {result in Theorem \ref{thm:quadratic} still holds} if \eqref{qgr} is replaced by
			\begin{equation*} 
				|f_i(x,t,\ub)| \le C(1+|\ub|^{2+\eps}), \quad  \forall (x,t)\in \R^N\times \R_+,\; \forall \ub \in \R_+^m, \quad \forall i=1,\ldots, m.
			\end{equation*}
			\item As mentioned earlier, it is interesting that the quadratic growth of nonlinearities is independent of the fractional order $\alpha \in (0,1)$. Considering that the case {$\alpha = 0$, namely the case of ODEs}, is straightforward and the case of local diffusion $\alpha = 1$ has been proved in \cite{fellner2020globalA}, with some abuse of notation, we can say that result in Theorem \ref{thm:quadratic} is true for all $\alpha \in [0,1]$.
			\item {We also remark that the condition $u_{i,0}\in \LO{1} \cap \LO{\infty}$ ensures a uniform bound of the $L^1$-norm of the mild solution, which plays a role in the proof of the uniform-in-time bound in Theorem \ref{thm:quadratic}. If only the condition $u_{i,0}\in \LO{\infty}$ is imposed}, the global boundedness in time is in fact a delicate issue since there might be some infinite supply of fuel from infinity. We refer the interested reader to the recent work \cite{la2024uniform} for more details. Note also that the large time behavior of solutions in the entire space $\R^N$ can be also very different compared to the case of bounded domains, see e.g. \cite{mielke2024convergence}. The boundedness of mild solutions to \eqref{sys} with bounded initial data remains an interesting open problem.
			\item The results in Theorem \ref{thm:quadratic} are applicable to obtain the global existence and boundedness of solutions to bimolecular reactions $\mathsf S_1 + \mathsf S_3 \leftrightarrows \mathsf S_2 + \mathsf S_4$ with fractional diffusion
			\begin{equation*}
				\begin{cases}
					\pa_t u_i + d_i(-\Delta)^{\alpha}u_i = f_i(\ub):= (-1)^i(u_1u_3 - u_2u_4), &x\in\R^N, t>0, i=1,\ldots, 4,\\
					u_i(x,0) = u_{i,0}(x), &x\in\R^N, i=1,\ldots, 4.
				\end{cases}
			\end{equation*}
			Note that, due to the quadratic growth and symmetry, the global existence of bounded mild solutions to this system, even in the case of local diffusion $\alpha = 1$, is difficult and has been solved only recently in \cite{CGV19,fellner2020globalA,Sou18}. 
		\end{itemize}
	\end{remark}
	
	We now sketch the main ideas to prove Theorem \ref{thm:quadratic}. We first consider the case of mass conservation, i.e. \eqref{M} is satisfied with an equality sign. The case of mass dissipation can be easily transformed into the conservation case by adding an equation to the system, see Lemma \ref{lem:equivalence}. By summing up the equations in \eqref{sys} then \textit{integrating only in time}, one obtains
	\begin{equation*} 
		\sum_{i=1}^mu_i(x,t) + (-\Delta)^{\alpha}\int_0^t\sum_{i=1}^md_iu_i(x,s)ds = \sum_{i=1}^mu_{i,0}(x), \quad x\in \R^N, t>0.
	\end{equation*}
	Denoting $v(x,t):= \int_0^t\sum_{i=1}^md_iu_i(x,s)ds$, we see that the $\LO{\infty}$-norm of $u_i$ can be estimated from above by the $\LO{\infty}$-norm of $(-\Delta)^{\alpha}v$. To estimate the latter, we notice that $v$ solves the following parabolic equation with non-local diffusion
	\begin{equation*} 
		b(x,t)\partial_t v + (-\Delta)^\alpha v = \sum_{i=1}^mu_{i,0}
	\end{equation*}
	where the function $b(x,t)$ is defined as
	\begin{equation*} 
		\frac{1}{\max_{1 \leq i \leq m}\{d_i\}} \le b(x,t):= \frac{\sum_{i=1}^m u_i(x,t)}{\sum_{i=1}^md_iu_i(x,t)} \le \frac{1}{\min_{1 \leq i \leq m} \{d_i\}} \quad \forall (x,t)\in \R^N\times\R_+.
	\end{equation*}
	At this point, we refine the De Giorgi machinery for inhomogeneous non-local parabolic equation, combining \cite{caffarelli2011regularity} and \cite{vasseur2016giorgi}, to show that $v$ is H\"older continuous of order $\gamma \in (0,1)$. We can always choose $\gamma < \alpha$ if necessary. This is the starting point of the following series of estimates, with $U = \sup_{Q_T}\sup_{i=1,\ldots, m}u_i(x,t)$ and $Q_T = \R^N\times (0,T)$,
	\begin{align*}
		U \lesssim 1 + \sup_{Q_T}|(-\Delta)^\alpha v| \lesssim (\sup_{Q_T} |(-\Delta)^{\frac{\alpha}{2}}v|)^{\frac{1}{2}}\bra{1 + \sum_{i=1}^m\sup_{Q_T} |(-\Delta)^{\frac{\alpha}{2}}u_i|}^{\frac{1}{2}}\\
		\lesssim \bra{1 + U^{\frac{\alpha-\gamma}{2\alpha - \gamma}}}^{\frac{1}{2}}\bra{1 + {U^{\frac{3}{2}}}}^{\frac{1}{2}} \lesssim 1 + U^{\frac{\alpha-\gamma}{2(2\alpha-\gamma)} + \frac{3}{4}}.
	\end{align*}
	Since the exponent of $U$ on the right hand side is smaller than one, by Young's inequality, we derive $U\lesssim 1$, which guarantees the global existence of the local bounded solution. The uniform boundedness in time is then obtained by using a time cut-off function to show that $\|u_i\|_{L^{\infty}(\R^N\times(\tau,\tau+1))} \le C$ for a constant $C$ independent of $\tau$.
	
	\medskip
	When the nonlinearities are of super-quadratic growth rates, results in \cite{pierre2023examples} suggest that further assumptions need to be imposed. One {of} such assumptions, which is quite natural and appears frequently in applications, is the following
	\begin{enumerate}[ref=\textbf{ISC}, label=(\textbf{ISC}), align=left] 
		\item\label{ISC}({$\rho$}-order intermediate sum condition) for each $i\in \{1,\ldots, m-1\}$ there are $a_{ij}\ge 0$, $j=1,\ldots, i$ with $a_{ii}>0$ such that 
		\begin{equation*}
			\sum_{j=1}^{i}a_{ij}f_j(x,t,\ub)\le C(\Phi(x,t)+|\ub|^{\rho})\quad \forall x \in \R^N, \, t \ge 0, \, \ub\in \R_+^m,
		\end{equation*}
		for  constants $C>0$ and {$\rho>0$} independent of $i, j$, $\ub$, and $0\le \Phi \in L^1(Q_T)\cap L^\infty(Q_T)$ for any $T>0$. 
		
		By dividing by $a_{ii}>0$ for each $i=1,\ldots, m$, we can assume, without loss of generality, that $a_{ii}=1$ for all $i=1,\ldots, m$.
	\end{enumerate}
	We also assume that the nonlinearities are \textit{bounded from above} by a polynomial of {\textit{arbitrary order}}
	\begin{enumerate}[ref=\textbf{Pol}, label = (\textbf{Pol}), align=left]
		\item\label{Pol} there exist $C, \nu > 0$ such that
		\begin{equation*} 
			f_i(x,t,\ub)\le C\bra{\Phi(x,t)+|\ub|^{\nu}}, \quad \forall x\in \R^N, \;t\ge 0,\; \ub\in \R_+^m, \quad \forall i=1,\ldots, m,
		\end{equation*}
		where we take the same function $\Phi$ as in \eqref{ISC} for simplicity.
	\end{enumerate}
	It is remarked that there is no restriction on the exponent $\nu>0$. It can be seen from \eqref{ISC} and \eqref{Pol} that except for the first nonlinearity\footnote{Due to permutation, it could be any of the nonlinearities.} all other nonlinearities can have arbitrary growth rates provided some good ``cancellation'' through linear combination as in \eqref{ISC}. The following second result, therefore, covers a large class of systems that cannot be handled by Theorem \ref{thm:quadratic}.
	\begin{theorem}[Intermediate sum condition]\label{thm:ISC}
		Assume \eqref{F}, \eqref{P}, \eqref{M}, \eqref{Pol}, and \eqref{ISC}. Then there exists a constant $\eps_* > 0$ depending only on $N,\alpha$, $d_i$, $i=1,\ldots,m$, such that if ${\rho}$ satisfies
		\begin{equation} \label{cond:mu}
			1\le {\rho} \le \min\left\{1 + \frac{2\alpha(2+\eps_*)}{N+2\alpha},2\right\},
		\end{equation}
		then for any non-negative initial data $\ub_0\in (\LO{1}\cap \LO{\infty})^m$, there exists a global mild solution to \eqref{sys} according to Definition \ref{def:solution}. Moreover, assume that $\sup_{T>0}\|\Phi\|_{L^1(Q_T)\cap L^{\infty}(Q_T)} < +\infty$, then the solution is bounded uniformly in time, i.e.
		\begin{equation}\label{uniform-bound-2}
			\limsup_{t\to +\infty}\sup_{i=1,\ldots, m}\|u_i(t)\|_{\LO{\infty}} < +\infty.
		\end{equation}
	\end{theorem}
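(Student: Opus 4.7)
The plan is to combine an approximation scheme with a duality method adapted to the fractional Laplacian on $\R^N$ and an iterated use of \eqref{ISC}. First I would truncate the nonlinearities, setting $f_i^n := \psi_n(|\ub|)\, f_i$ with a smooth cut-off $\psi_n$ equal to $1$ on $\{|\ub|\le n\}$ and vanishing on $\{|\ub|\ge 2n\}$. Since $\psi_n\ge 0$, all of \eqref{F}, \eqref{P}, \eqref{M}, \eqref{Pol}, \eqref{ISC} are preserved with the same constants, while $f_i^n$ becomes globally bounded; Theorem~\ref{thm:quadratic} then produces a unique global bounded mild solution $\ub^n$ to the truncated system. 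Mass dissipation yields the $n$-uniform control $\sup_{t\ge \tau}\sum_i \|u_i^n(t)\|_{\LO{1}}\le \sum_i \|u_{i,\tau}\|_{\LO{1}}$.

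The heart of the proof is an $L^{2+\eps_*}$-duality estimate on $\R^N$ for the fractional parabolic inequality with bounded variable coefficient: if $w\ge 0$ satisfies
\begin{equation*}
	\partial_t w + (-\Delta)^{\alpha}(bw) \le h, \qquad w(\tau)=w_\tau,
\end{equation*}
with $\underline b \le b \le \overline b$, then
\begin{equation*}
	\|w\|_{\LQ{2+\eps_*}} \le C\bigl(\|h\|_{\LQ{2+\eps_*}} + \|w_\tau\|_{\LO{2+\eps_*}}\bigr),
\end{equation*}
for some $\eps_*>0$ depending only on $N$, $\alpha$, and the ratio $\overline b/\underline b$. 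The $L^2$ case follows from Plancherel applied to the backward dual equation in $\R^N$, while the $\eps_*$-improvement is a Gehring-type self-improving integrability for fractional parabolic equations---equivalently, a Calder\'on--Zygmund estimate for the fractional heat kernel---that substitutes for the spectral expansion used in bounded domains.

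Given this lemma, I would iterate it through \eqref{ISC}. For each $i$, setting $w_i^n := \sum_{j=1}^i a_{ij} u_j^n$ and combining the first $i$ equations of \eqref{sys-tau} gives
\begin{equation*}
	\partial_t w_i^n + (-\Delta)^\alpha (b_i^n w_i^n) \le C(\Phi + |\ub^n|^{\rho}), \qquad b_i^n := \frac{\sum_{j\le i} a_{ij} d_j u_j^n}{\sum_{j\le i} a_{ij} u_j^n} \in [\min_j d_j,\max_j d_j].
\end{equation*}
The base case uses \eqref{M} with $W^n=\sum_i u_i^n$ in place of $w_i^n$, which by the duality lemma gives $\|\ub^n\|_{\LQT{2+\eps_*}}\le C_T$. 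A bootstrap alternating the duality lemma with the smoothing effect of the fractional heat equation then produces successively higher integrability. The quantitative restriction \eqref{cond:mu} enters at exactly this stage: interpolating between the $\Lxt{\infty}{1}$ bound from mass and the $L^{2+\eps_*}$ bound from duality, and invoking the parabolic gain of the fractional heat equation (whose natural scaling is $(N+2\alpha)$-dimensional), allows one to place $|\ub^n|^\rho$ in $L^{2+\eps_*}(Q_T)$ precisely when $\rho\le 1+2\alpha(2+\eps_*)/(N+2\alpha)$; the cap $\rho\le 2$ reflects that nothing more than what Theorem~\ref{thm:quadratic} already gives can be squeezed from this interpolation.

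Once uniform $L^{2+\eps_*}$ control on $\ub^n$ is in hand, assumption \eqref{Pol} places $f_i^n$ in a good Lebesgue space, and Duhamel's formula together with the smoothing estimate $\|S_{i,\alpha}(t)g\|_{\LO{\infty}}\le Ct^{-N/(2\alpha p)}\|g\|_{\LO{p}}$ upgrades this to uniform-in-$n$ $L^\infty$ bounds on finite time intervals; standard compactness and stability then let me pass to the limit $n\to\infty$ and recover a mild solution to \eqref{sys-tau}. The uniform-in-time bound \eqref{uniform-bound-2} follows by repeating the construction on sliding windows $(\tau,\tau+1)$ with constants depending only on $\sup_T\|\Phi\|_{L^1(Q_T)\cap L^\infty(Q_T)}$ and on $\|\ub(\tau)\|_{\LO{1}\cap\LO{\infty}}$, mirroring the time cut-off argument sketched after Theorem~\ref{thm:quadratic}. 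The main technical obstacle is the $\eps_*$-improved duality lemma on $\R^N$: with neither eigenfunction expansions nor a compact exhaustion available, one has to extract $\eps_*$ directly from a Gehring/Calder\'on--Zygmund analysis of the fractional heat kernel and track its dependence on $N,\alpha$ and on the diffusion ratio finely enough that the closure \eqref{cond:mu} holds with the stated exponent.
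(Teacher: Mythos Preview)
Your overall architecture---improved $L^{2+\eps_*}$ duality for the mass-dissipation inequality, then a bootstrap through the triangular structure \eqref{ISC}, then \eqref{Pol} for the final $L^\infty$ step, then a time cut-off for uniformity---matches the paper exactly. Two points where your route diverges from the paper are worth flagging.

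First, the mechanism for $\eps_*$. You propose a Gehring/Calder\'on--Zygmund self-improvement for the non-divergence fractional equation $\partial_t w + (-\Delta)^\alpha(bw)\le h$ on $\R^N$, which would be technically heavy and whose availability in this exact setting is not obvious. The paper instead uses the much more elementary trick of \cite{canizo2014improved}: the optimal constant $C_{p',N}$ in the $L^{p'}$-maximal regularity (Lemma~\ref{Lp-regularity}) satisfies $C_{2,N}\le 1$, and Riesz--Thorin shows $\liminf_{p'\to 2^-}C_{p',N}\le 1$, so for $p_*$ close enough to $2$ one has $\frac{\overline a-\underline a}{\overline a+\underline a}\,C_{p_*',N}<1$. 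A direct duality against the constant-coefficient backward problem with $\omega=(\overline a+\underline a)/2$ then absorbs the variable-coefficient error and yields the $L^{p_*}$ bound (Lemma~\ref{lem:improved_duality}). This is the sharp and simple source of $\eps_*$.

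Second, and more substantively, your bootstrap has a gap. Applying your duality lemma to $w_i^n=\sum_{j\le i}a_{ij}u_j^n$ only returns $L^{2+\eps_*}$---it does not gain integrability, because the variable-coefficient absorption only works near $p=2$. Heat smoothing (Lemma~\ref{lem:heat_unsigned}) does gain integrability but requires a \emph{constant} coefficient, so it applies directly only to the individual equation for $u_1$ (where \eqref{ISC} with $i=1$ gives $f_1\le C(\Phi+|\ub|^\rho)$). To pass the gain from $u_1,\dots,u_i$ to $u_{i+1}$, the paper uses a second, different duality estimate (Lemma~\ref{lem:propagation}): one writes $\partial_t(u_{i+1}+v)+(-\Delta)^\alpha(d_{i+1}u_{i+1}+k(x,t)v)\le f_*$ with $v=\sum_{j\le i}a_{ij}u_j$ already in $L^{p_1}$, tests against the \emph{constant-coefficient} dual $\partial_t\psi-d_{i+1}(-\Delta)^\alpha\psi=-\theta$, and moves the variable-coefficient term $k(-\Delta)^\alpha\psi\cdot v$ to the right using the known $L^{p_1}$ bound on $v$ and the $L^{p_1'}$ maximal regularity for $\psi$. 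This works at every $p_1$, not just near $2$, and is what makes the iteration $p_0\to p_1\to\cdots$ close. Your sketch does not isolate this propagation step, and without it the alternation you describe does not actually raise the integrability. The approximation-and-limit wrapper is fine but unnecessary: the paper works directly on the local mild solution and invokes the blow-up criterion of Proposition~\ref{prop:local-existence}.
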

	\begin{remark}\hfill
		\begin{itemize}
			\item Similarly to Remark \ref{remark1}, the solution obtained in Theorem \ref{thm:ISC} is in fact a strong solution.
			\item Unlike Theorem \ref{thm:quadratic}, the order {$\rho$} of the intermediate sum condition in Theorem \ref{thm:ISC} depends on $\alpha$, and we see that ${\rho} \to 1$ as $\alpha \to 0$.
			\item When $N\ge 3$, we see that ${\rho}<2$ (unless $\eps_*$ is large, which is still unclear), so we have sub-quadratic intermediate sum condition in three or higher dimensions. When $N = 2$, if $\alpha\in (0,1)$ but close enough $1$, we still can have ${ \rho} = 2$, which extends the results in \cite{morgan2020boundedness}. 
		\end{itemize}
	\end{remark}
	To prove Theorem \ref{thm:ISC}, we develop duality methods, see e.g. \cite{pierre2010global}, for the case of the fractional Laplacian in the whole space $\R^N$. More precisely, we first prove an $L^p-L^q$ regularizing effect of the fractional heat operator. Then by showing an improved duality estimate, cf. \cite{canizo2014improved}, we get $u_i\in \LQT{2+\eps_0}$ for some $\eps_0>0$. Now, combining this with the fact that $f_1(x,t,\ub)$ is bounded from above by $\Phi + |\ub|^{{\rho}}$, we can use the above $L^p-L^q$ regularizing effect to get $u_1 \in \LQT{p_1}$ for some $p_1 > p_0$. Then, by proving another duality estimate, we show that this $\LQT{p_1}$-bound can be propagated to $u_2$, $u_3$, $\ldots$, $u_m$. Repeating this bootstrap argument we get $u_i\in \LQT{p}$ for any $p<\infty$, and ultimately $u_i \in \LQT{\infty}$ thanks to \eqref{Pol}, which proves the global existence. The uniform-in-time boundedness is then again shown by using the time cut-off function.
	
	\medskip
	\textbf{The paper is organized as follows.} In the next section, we provide the preliminaries of \eqref{sys} including the local existence, blow-up criterion and non-negativity of mild solutions. Section \ref{sec:quadratic} provides the proof of Theorem \ref{thm:quadratic}, started with a regularization of fractional diffusion equation with respect to some a-priori H\"older continuity in subsection \ref{subsec:reg_frac_diff}, followed by some interpolation technique in subsection \ref{subsec:interp} to show that the $L^{\infty}$-bound of the mild solution is bounded, hence the global existence. In Section \ref{sec:isc}, we first show some $L^p-L^q$ regularizing effect of the fractional diffusion in subsection \ref{subsec:reg_frac_diff_LpLq}, then develop some duality method for non-local diffusion problems in subsection \ref{subsec:duality}, and finally present the proof of Theorem \ref{thm:ISC} using {a} bootstrap argument in subsection \ref{subsec:bootstrap}. The Appendix \ref{appendix} is devoted to some technical results.
	
	\medskip
	\textbf{Notation.} We denote by $C$ a generic constant, which can be different from line to line, or even in the same line. Occasionally we write $C_{T,\gamma,\text{etc}}$ to emphasize the dependence of $C$ on $T$, $\gamma$, etc. { The notation $A \gtrsim B$ (resp. $A \lesssim B$) means $A \geq c\,B$ (resp. $A \leq c\,B$) where the implicit $c$ is a positive constant depending on some initial parameters but independent of $A$ and $B$. If $A \gtrsim B$ and $A \lesssim B$, we write $A \asymp B$.}
	
	\section{Preliminaries}\label{preliminaries} 
	Let $K_\alpha$ be the fundamental solution to the fractional heat equation 
	$$\partial_t u + (-\Delta)^\alpha u = 0$$
	in $\R^N \times (0,+\infty)$, which can be defined via the Fourier transform by
	\begin{equation}\label{def:Kalpha}
	K_\alpha(x,t) = \CF^{-1}(e^{-t |\cdot|^{2\alpha}})(x).
	\end{equation}
	{where $\CF$ denote the Fourier transform}.
	By \cite[estimate (2.4)]{Vaz2018}, we have
	\begin{equation} \label{selfsim} K_\alpha(x,t) \asymp t(t^{\frac{1}{\alpha}}+|x|^2)^{-\frac{N+2\alpha}{2}} \quad \forall x \in \R^N,\, t >0.
	\end{equation}
	Moreover, $K_\alpha$ has the self-similar form
	$$ K_\alpha(x,t) = t^{-\frac{N}{2\alpha}}\tilde K_\alpha(t^{-\frac{1}{2\alpha}}x) \quad \forall x \in \R^N, \, t>0,
	$$
	where $\tilde K_\alpha$ is a radially symmetric function. It is known (see e.g. \cite[Lemma 2.1]{MYZ2008}) that
	\begin{align} \label{est:tilde-funda-1} |\tilde K_\alpha(x)| &\leq C(1+|x|)^{-N-2\alpha} \quad \forall x \in \R^N, \\ \label{est:grad-tilde-funda-1}
		|\nabla \tilde K_\alpha (x)| &\leq C(1+|x|)^{-(N+1)}\quad \forall x \in \R^N.	
	\end{align}
	As a consequence of the above estimates, $\tilde K_\alpha \in L^p(\R^N)$ for any $p \in [1,\infty]$. 
	Moreover, from \cite[Lemma 2.2 and Remark 2.1 (iii)]{MYZ2008}, we have
	\begin{equation} \label{est:fracfunda}
		|(-\Delta)^{\frac{\alpha}{2}} K_\alpha(x,t)| \leq C (t^{\frac{1}{2\alpha}}+|x|)^{-(N+\alpha)} \quad \forall x \in \R^N,\, t>0.	
	\end{equation}
	Let $S_\alpha(t)=e^{-t(-\Delta)^\alpha}$ be the semigroup generated by $(-\Delta)^\alpha$, namely $S_\alpha(t) \varphi = K_\alpha(\cdot,t)* \varphi$  for all $t>0$. By \cite[Lemma 3.1]{MYZ2008}, for any $1 \leq r \leq p \leq \infty$, $\varphi \in L^r(\R^N)$ and $\beta>0$,
	\begin{align} \label{est:Lp-S}
		\| S_\alpha(t) \varphi \|_{L^p(\R^N)} &\leq C(N,\alpha,r,p)t^{-\frac{N}{2\alpha}(\frac{1}{r} - \frac{1}{p}) }\| \varphi \|_{L^r(\R^N)}, \quad \forall t >0,	\\ \label{est:Lp-DS}
		\| (-\Delta)^{\beta} (S_\alpha(t) \varphi) \|_{L^p(\R^N)} &\leq C(N,\alpha,\beta,r,p) t^{-\frac{\beta}{\alpha} -  \frac{N}{2\alpha}(\frac{1}{r} - \frac{1}{p})} \| \varphi \|_{L^r(\R^N)}, \quad \forall t >0. 
	\end{align}
	
	Next we state basic results regarding the local existence, regularity and blowup criteria for problem \eqref{sys}.
	
	\begin{proposition} \label{prop:local-existence}
			Assume \eqref{F} holds. Let $\tau \geq 0$ and  $\ub_\tau=(u_{1,\tau},\ldots,u_{m,\tau}) \in (L^1(\R^N) \cap L^\infty(\R^N))^m$. 
			\begin{enumerate}
				\item  Local existence. There exists $T>\tau$ with $T-\tau$ depending only on $\| \ub_\tau \|_{L^1(\R^N) \cap L^\infty(\R^N)}$, $d_i$, and $f_i$, $1 \leq i \leq m$, such that system \eqref{sys-tau} admits a unique mild solution $\ub$ in $Q_{\tau,T}$ in the sense of Definition \ref{def:solution}.

				\item Blow-up criterion. Let $T_*$ be the maximal time for the existence of the solution $\ub$. If $T_*<\infty$ then 
				$$ \lim_{t \nearrow T_*}\sum_{i=1}^m \| u_i(t)\|_{L^\infty(\R^N)} = +\infty.
				$$
				\item {Non-negativity}. Assume that \eqref{P} holds. Then from $u_{i,\tau}\ge 0$ a.e. in $\R^m$ for all $i=1,\ldots, m$, it follows $u_{i}(\cdot, t)\ge 0$ a.e. in $\R^m$ for a.e. $t\in (0,T_{*})$.
			\end{enumerate}
	\end{proposition}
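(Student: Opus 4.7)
The local existence in part (1) I would prove by a standard Banach fixed point argument on the Duhamel map
\[ \Psi_i(\ub)(t) := S_{i,\alpha}(t-\tau)\, u_{i,\tau} + \int_\tau^t S_{i,\alpha}(t-s)\, f_i(\cdot, s, \ub(s))\, ds, \qquad 1 \leq i \leq m, \]
set on the closed ball of radius $R := 2\,\|\ub_\tau\|_{(L^1 \cap L^\infty)^m}$ in the space $X_{T'} := C([\tau, \tau+T']; (L^1(\R^N) \cap L^\infty(\R^N))^m)$. Because $K_\alpha \geq 0$ with total mass one, each $S_{i,\alpha}(t)$ is an $L^p(\R^N)$-contraction for every $p \in [1,\infty]$; combining this with the uniform-in-$(x,t)$ local Lipschitz control of $f_i$ from \eqref{F}, one obtains
\[ \|\Psi(\ub) - \Psi(\ub')\|_{X_{T'}} \leq T'\, L(R)\, \|\ub - \ub'\|_{X_{T'}}, \]
so for $T'$ small enough (depending only on $R$, the $d_i$, and the local Lipschitz constants of $f_i$), $\Psi$ becomes a contraction self-map. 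Its unique fixed point is the desired mild solution, and uniqueness in $X_{T'}$ follows from the same estimate.

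Part (2) I would prove by the usual continuation-versus-maximality argument. Assume by contradiction that $T_* < \infty$ yet $M := \limsup_{t \nearrow T_*}\sum_i \|u_i(t)\|_{L^\infty(\R^N)} < \infty$. Pick $t_0$ close enough to $T_*$ that $\sum_i \|u_i(t_0)\|_{L^\infty} \leq M+1$; taking the $L^1(\R^N)$-norm in the Duhamel identity and using $L^1$-contractivity of $S_{i,\alpha}$ together with the local Lipschitzness of $f_i$ and the already available $L^\infty$-bound on $[\tau,t_0]$ yields, via a Gr\"onwall estimate, a finite bound on $\|\ub(t_0)\|_{L^1}$ depending only on $\|\ub_\tau\|_{L^1}$, $M$ and $T_*$. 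Applying part (1) with initial time $t_0$ therefore extends the solution to some $t_0 + T' > T_*$, contradicting the maximality of $T_*$.

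For part (3), I would combine a truncation trick with a Kato-type energy estimate. Define the modified nonlinearity $\tilde f_i(x,t,\ub) := f_i(x,t,\ub_+)$, where $\ub_+$ denotes the componentwise positive part; it still satisfies \eqref{F} by composition with the Lipschitz map $\ub \mapsto \ub_+$, and \eqref{P} now gives $\tilde f_i(x,t,\ub) \geq 0$ whenever $u_i \leq 0$. Let $\tilde\ub$ be the local mild solution of the modified system furnished by part (1). After approximating the initial datum by a mollification (and transferring the conclusion back using the stability estimate built into the fixed-point step above), I test the $i$-th modified equation with $-(\tilde u_i)_-$ and use the bilinear representation of $(-\Delta)^\alpha$ to derive the Kato-type inequality
\[ \int_{\R^N} (-\Delta)^\alpha \tilde u_i \cdot (-(\tilde u_i)_-)\, dx \;\geq\; \|(-\Delta)^{\alpha/2}(\tilde u_i)_-\|_{L^2(\R^N)}^2, \]
while the reaction integral $-\int_{\R^N} \tilde f_i(\cdot,\cdot,\tilde\ub)\,(\tilde u_i)_-\, dx$ is nonpositive by the sign property of $\tilde f_i$ discussed above. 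Hence $t \mapsto \|(\tilde u_i)_-(t)\|_{L^2}^2$ is nonincreasing and vanishes at $t = \tau$, so $\tilde u_i \geq 0$; consequently $\tilde f_i(\cdot,\cdot,\tilde\ub) = f_i(\cdot,\cdot,\tilde\ub)$ on the solution, $\tilde\ub$ is also a mild solution of the original system, and the uniqueness from (1) gives $\ub = \tilde\ub \geq 0$. The main obstacle I anticipate is the rigorous justification of the testing: a mild solution carries no a priori $L^2_t H^\alpha_x$-regularity, so the energy identity must be established on smooth approximants and propagated to the limit, and coordinating this approximation with the nonlocal nature of $(-\Delta)^\alpha$ and the locally Lipschitz reaction terms is the technical heart of the argument.
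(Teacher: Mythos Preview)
Your proposal is correct and follows essentially the same route as the paper: a Banach fixed-point argument on the ball of radius $2\|\ub_\tau\|_{L^1\cap L^\infty}$ in $C([\tau,T];(L^1\cap L^\infty)^m)$ for (1), a continuation/contradiction argument for (2), and for (3) the same truncation $f_i(x,t,\ub_+)$ followed by testing with the negative part and the Kato-type inequality, then invoking uniqueness. The paper's proof is terser---it omits the $L^1$-control step in (2) and does not flag the regularity needed to justify the energy identity in (3)---so your explicit acknowledgment of the approximation issue for mild solutions is, if anything, more careful than the original.
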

	\begin{proof}
		The proof of this proposition should be standard. For the sake of completeness, we sketch the main steps here.
		
		\medskip
		\noindent (1)\textit{ Local existence}. Let $ T > \tau$ to be determined later and define  
		$$ \mathscr{E}_T:= \{ \ub =(u_1,\ldots,u_m): \| \bm{u} \|_{C([\tau,T];L^1(\R^N) \cap L^\infty(\R^N))} \leq 2\| \ub_\tau \|_{L^1(\R^N) \cap L^\infty(\R^N)}  \}.
		$$		
		Define the solution mapping $\Phi(\ub)=(\Phi_1(\ub),\ldots,\Phi_m(\ub))$ by
		$$ \Phi_i(\ub)(x,t) := (S_{i,\alpha}(t-\tau)u_{i,\tau})(x) + \int_{\tau}^t (S_{i,\alpha}(t-s)f_i(\cdot,s,\ub(s)))(x)ds, \quad x \in \R^N, \, t \in (\tau,T),
		$$
		where $\{S_{i,\alpha}(t)\}_{t\ge 0}$ is the semigroup generated by  $d_i(-\Delta)^{\alpha}$.
		By the contraction property of $S_{i,\alpha}$ on $L^p(\R^N)$, for any $1 \leq p \leq \infty$, and the local Lipschitz continuity of $f_i(x,t,\cdot)$, we can choose $T-\tau$ sufficiently small such that $\Phi$ is a contraction mapping from $\mathscr E_T$ to $\mathscr E_T$. This gives the existence and uniqueness of the local mild solution. 

		\medskip
		\noindent (2) \textit{Blow-up criteria}. This follows from a straightforward contradiction argument.
		
		\medskip
		\noindent (3) \textit{Non-negativity.} Denote $u_{i}^{+} = \max\{0, u_i\}$ and $\ub^+ = (u_1^+,\ldots, u_m^+)$. Consider the  system
		\begin{equation}\label{aux_sys}
			\begin{cases}
				\partial_t u_i + (-\Delta)^{\alpha}u_i = f_i(x,t,\ub^+), &x\in\R^N, \; t\in (\tau,T_*),\\
				u_i(x,\tau) = u_{i,\tau}(x), &x\in \R^N.
			\end{cases}
		\end{equation}
		By using the same argument in (1), we can obtain a unique local solution. By multiplying the equation of $u_i$ by $u_i^-:= \min\{0, u_i\}$, then integrating over $\R^N$, we obtain
		\begin{equation*}
			\frac 12 \frac{d}{dt}\int_{\R^N}|u_i^{-}|^2dx + \int_{\R^N}|(-\Delta)^{\frac{\alpha}{2}}u_i^{-}|^2dx \le \int_{\R^N}f_i(x,t,\ub^+)u_i^{-}dx \le 0,
		\end{equation*}
		where at the last inequality, we use the quasi-positivity assumption \eqref{P}. Therefore, for $t\in (\tau,T_*)$, $\|u_i^-(t)\|_{\LO{2}} \le \|u_{i,\tau}^{-}\|_{\LO{2}} = 0$,
		since $u_{i,\tau}\ge 0$. This implies that $u_i(t)\ge 0$ for all $t\in (\tau,T_*)$ and all $i=1,\ldots, m$. It follows that $u^+ = u$, and hence the solution to \eqref{aux_sys} is also a solution to \eqref{sys-tau}. By the uniqueness, we obtain the non-negativity of solution to \eqref{sys-tau}, which finishes the proof.
	\end{proof}
	
	\begin{lemma} \label{lem:reg-sys}
	Assume \eqref{F} holds. Let $0 \leq \tau < T$, $\ub_\tau=(u_{1,\tau},\ldots,u_{m,\tau}) \in (L^1(\R^N) \cap L^\infty(\R^N))^m$ and $\ub$ is a mild solution of system \eqref{sys-tau} in $Q_{\tau,T}$. Then for any $1 \leq p \leq \infty$, $1<q,r<\infty$, $\beta \in (0,\min\{1,2\alpha\})$, $i=1,\ldots,m$, and $t_0 \in (\tau,T)$,
	\begin{equation} \label{lem:reg-ui} \begin{aligned}  
	&\| u_i \|_{L^\infty((t_0,T);C^\beta(\R^N))} +  \| \partial_t u_i \|_{L^\infty((t_0,T);C^\beta(\R^N))} +	\| (-\Delta)^{\frac{\alpha}{2}}u_i \|_{L^\infty((t_0,T);L^p(\R^N))} \\
	&\quad \quad + \| (-\Delta)^{\alpha} u_i \|_{L^q((t_0,T);L^r(\R^N))}	 + \| (-\Delta)^\alpha u_i \|_{L^\infty((t_0,T);C^\beta(\R^N))} \leq C.
	\end{aligned} \end{equation}
	The constant $C$ depends on $N,\alpha,\beta,t_0-\tau,T-\tau,f_i,\| u_{i,\tau}\|_{L^\infty(\R^N)},\| \bm{u}\|_{L^\infty((\tau,T);L^1(\R^N) \cap L^\infty(\R^N))})$. In particular, for a.e. $t \in (\tau,T)$,
	$u(t), \partial_tu(t), (-\Delta)^\alpha u(t) \in C^{\beta}(\R^N)$.
	\end{lemma}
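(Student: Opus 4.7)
My plan is to combine the Duhamel representation \eqref{sys:Duhamel} with the semigroup estimates \eqref{est:Lp-S}--\eqref{est:Lp-DS} and the kernel bounds \eqref{selfsim}--\eqref{est:fracfunda}. Proposition \ref{prop:local-existence} provides $\ub\in(C([\tau,T];\LO{1}\cap\LO{\infty}))^m$, and combining this with the local Lipschitz continuity from \eqref{F} yields
\begin{equation*}
\sup_{t\in(\tau,T)}\norm{f_i(\cdot,t,\ub(t))}_{L^p(\R^N)}\le C\quad\text{for every }p\in[1,\infty],\ i=1,\dots,m.
\end{equation*}
This uniform control on the inhomogeneity is the input for every subsequent bound.

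For the $L^p$-type estimates, I apply \eqref{est:Lp-DS} with $\beta=\alpha/2$ and $r=p$ termwise in \eqref{sys:Duhamel}: the time singularity $(t-s)^{-1/2}$ is integrable and, for $t\ge t_0>\tau$, the initial piece $(t-\tau)^{-1/2}\norm{u_{i,\tau}}_{L^p(\R^N)}$ is uniformly bounded, giving $(-\Delta)^{\alpha/2}u_i\in L^\infty((t_0,T);L^p(\R^N))$ for every $p\in[1,\infty]$. The estimate on $(-\Delta)^\alpha u_i$ in $L^q((t_0,T);L^r(\R^N))$ with $1<q,r<\infty$ then follows from maximal $L^q$-$L^r$ regularity for the operator $\pa_t+d_i(-\Delta)^\alpha$ in $\R^N\times(t_0,T)$, applied to the bounded inhomogeneity $f_i(\cdot,\cdot,\ub)$ and to the (already smoother) initial datum $u_i(t_0)$.

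For the $C^\beta$-regularity in $x$ of $u_i$ itself, I exploit the smoothing of $K_\alpha$. From \eqref{est:grad-tilde-funda-1} and the self-similar scaling one has $\norm{\nabla K_\alpha(\cdot,t)}_{L^1(\R^N)}\le Ct^{-1/(2\alpha)}$, and interpolating with $\norm{K_\alpha(\cdot,t)}_{L^1(\R^N)}=1$ gives, for every $\beta\in(0,1)$ and every $g\in\LO{\infty}$,
\begin{equation*}
\abs{(K_\alpha(\cdot,t)*g)(x)-(K_\alpha(\cdot,t)*g)(y)}\le Ct^{-\beta/(2\alpha)}\abs{x-y}^\beta\norm{g}_{\LO{\infty}}.
\end{equation*}
Inserting this into both terms of \eqref{sys:Duhamel}, the assumption $\beta<2\alpha$ makes the singularity $(t-s)^{-\beta/(2\alpha)}$ time-integrable, so $u_i\in L^\infty((t_0,T);C^\beta(\R^N))$. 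Combined with the $C^\beta$-in-$x$ hypothesis on $f_i$ in \eqref{F} and the uniform-in-$(x,t)$ Lipschitz continuity in $\ub$, this promotes the inhomogeneity to $f_i(\cdot,\cdot,\ub)\in L^\infty((t_0,T);C^\beta(\R^N))$.

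The final step, which I expect to be the main obstacle, is to upgrade $(-\Delta)^\alpha u_i$ from the $L^q$-bound above to $L^\infty((t_0,T);C^\beta(\R^N))$. I would split the Duhamel integral at an intermediate time $t_0'\in(\tau,t_0)$: the piece $S_{i,\alpha}(t-t_0')u_i(t_0')$ is handled by the same kernel-interpolation argument now applied to $(-\Delta)^\alpha K_\alpha$, whose Hölder seminorm in $x$ follows from \eqref{est:fracfunda} together with an analogous gradient estimate; the remaining convolution integral is controlled by the $C^\beta$-in-$x$ regularity of $f_i$ established above, via a Schauder-type estimate for the fractional heat operator. The $C^\beta$ bound on $\pa_t u_i$ is then immediate from the equation $\pa_t u_i=-d_i(-\Delta)^\alpha u_i+f_i(\cdot,t,\ub)$. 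The delicate part is precisely this Hölder control of the top-order term, which sits at the Schauder threshold for non-local parabolic equations and requires either invoking an off-the-shelf non-local Schauder estimate or performing a careful direct computation of the weighted Hölder seminorm of $(-\Delta)^\alpha K_\alpha(\cdot,t-s)$ followed by the splitting argument above.
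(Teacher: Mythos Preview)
Your proposal is correct and follows essentially the same route as the paper: semigroup estimate \eqref{est:Lp-DS} for $(-\Delta)^{\alpha/2}u_i$, maximal $L^q$--$L^r$ regularity (the paper's Theorem \ref{thm:LpLq-maximal}) for $(-\Delta)^\alpha u_i$ in $L^q_tL^r_x$, kernel interpolation for the $C^\beta$-regularity of $u_i$ (this is exactly the content of the paper's Remark \ref{est:Holder-u}), and finally a Schauder-type estimate for the top-order term, after which $\partial_t u_i$ is read off from the equation.

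The one technical difference is in how the initial-data singularity is removed before applying the Schauder estimate. You propose to split the Duhamel integral at an intermediate time $t_0'\in(\tau,t_0)$ and handle $S_{i,\alpha}(t-t_0')u_i(t_0')$ by direct kernel bounds. The paper instead multiplies $u_i$ by a smooth temporal cut-off $\varphi$ with $\varphi\equiv 0$ near $\tau$ and $\varphi\equiv 1$ on $[t_0,T]$, so that $v_i=\varphi u_i$ solves a fractional heat equation with \emph{zero} initial data and right-hand side $\varphi' u_i+\varphi f_i(\cdot,\cdot,\ub)\in L^\infty((\frac{\tau+t_0}{2},T);C^\gamma(\R^N))$ for an intermediate exponent $\gamma=\frac{1}{2}(\beta+\min\{1,2\alpha\})>\beta$; it then invokes the off-the-shelf Schauder estimate \cite[(A.7)]{caffarelli2013regularity} once. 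Both manoeuvres serve the same purpose and are standard; the cut-off version is slightly cleaner because it avoids a separate kernel computation for the homogeneous piece and reduces everything to a single citation.
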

	\begin{proof}
	Let $p \in [1,\infty]$. By \eqref{est:Lp-DS} with $p=r$ and $\beta=\alpha/2$, we have, for any $i=1,\ldots,m$, 
	$$ \| (-\Delta)^{\frac{\alpha}{2}}(S_{i,\alpha}(t-\tau)u_{i,\tau})\|_{L^p(\R^N)} \leq C(t-\tau)^{-\frac{1}{2}}\| u_{i,\tau}\|_{L^1(\R^N) \cap L^\infty(\R^N)}.
	$$
	Next, by Lemma \ref{lem:interchange-2} (for $p<+\infty$) and Lemma \ref{lem:interchange-3} (for $p=+\infty$), Young's convolution inequality and \eqref{est:tilde-funda-1}, \eqref{est:grad-tilde-funda-1}, we see that
	\begin{align*} \left\| (-\Delta)^{\frac{\alpha}{2}} \int_{\tau}^tS_{i,\alpha}(t-s)f_i(\cdot,s,\bm{u}(s))ds \right \|_{L^p(\R^N)} &\leq (t-\tau)^{\frac{1}{2}}\| f_i(\cdot,\cdot,\ub)\|_{L^\infty((\tau,t);L^1(\R^N) \cap L^\infty(\R^N))} \\
		&\leq L_i(t-\tau)^{\frac{1}{2}}\| \bm{u} \|_{L^\infty((\tau,t);L^1(\R^N) \cap L^\infty(\R^N))},
	\end{align*}
	where $L_i$ is the Lipschitz constant of $f_i$ with respect to the argument $\ub$ in the ball of center zero and radius $\| \ub \|_{L^\infty((\tau,t);L^1(\R^N) \cap L^\infty(\R^N))}$.
	Combining formulation \eqref{sys:Duhamel} and the above estimates yields 
	\begin{equation} \label{est:frac-u-sys} \begin{aligned} 
		\| (-\Delta)^{\frac{\alpha}{2}} u_i(t) \|_{L^p(\R^N)} &\leq (t-\tau)^{-\frac{1}{2}}\| u_{i,\tau}\|_{L^1(\R^N) \cap L^\infty(\R^N)} \\
		&+ L_i(t-\tau)^{\frac{1}{2}}\| \bm{u} \|_{L^\infty((\tau,t);L^1(\R^N) \cap L^\infty(\R^N))}.
	\end{aligned}	\end{equation}
	Let $r \in (1,\infty)$ and $t_0 \in (\tau,T)$. By \eqref{est:Lp-DS}, we have, for any $i=1,\ldots,m$, and $t \in (t_0,T)$,
	\begin{equation} \label{est:frac-alpha-far-1} \begin{aligned}\| (-\Delta)^{\alpha}(S_{i,\alpha}(t-\tau)u_{i,\tau})\|_{L^r(\R^N)} 
		\leq C(t-\tau)^{-1}\| u_{i,\tau}\|_{L^1(\R^N) \cap L^\infty(\R^N)},
		\end{aligned} 
	\end{equation} 
	where $C$ is a positive constant depending on $N,r,\alpha$. Now let $\phi_i$ be the solution to problem 
	\begin{equation}\label{sys-phi}
		\pa_t \phi_i + d_i(-\Delta)^{\alpha}\phi_i = f_i(x,t,\bm{u}), \; (x,t) \in Q_{\tau,T},\quad 
		\phi_i(x,\tau) = 0, \; x\in\R^N.
	\end{equation}
Let $q \in (1,\infty)$. By the $L^p-L^q$ maximal regularity in Theorem \ref{thm:LpLq-maximal}, we have
	\begin{equation} \label{est:frac-alpha-far-2} \begin{aligned} \| (-\Delta)^\alpha \phi_i \|_{L^q((\tau,T);L^r(\R^N))} 
			&\leq C\| f_i(x,t,\bm{u})\|_{L^q((\tau,T);L^r(\R^N))} \\
			& \leq C(T-\tau)^{\frac{1}{q}}\| f_i(x,t,\bm{u})\|_{L^\infty((\tau,T);L^1(\R^N) \cap L^\infty(\R^N))} \\
			&\leq CL_i(T-\tau)^{\frac{1}{q}} \| \bm{u} \|_{L^\infty((\tau,T);L^1(\R^N) \cap L^\infty(\R^N))},
		\end{aligned}
	\end{equation}
	where $C$ is a positive constant depending only on $N,r,q,\alpha$.
	Using the relation $u_i(t) = S_{i,\alpha}(t-\tau)u_{i,\tau} + \phi_i(t)$, together with \eqref{est:frac-alpha-far-1} and \eqref{est:frac-alpha-far-2}, we derive
	\begin{equation} \label{est:frac-alp} \begin{aligned}
		\| (-\Delta)^\alpha u_i \|_{L^q((t_0,T);L^r(\R^N))} &\leq C(t_0-\tau)^{-1}\| u_{i,\tau}\|_{L^1(\R^N) \cap L^\infty(\R^N)} \\
		&+ CL_i(T-\tau)^{\frac{1}{q}} \| \bm{u} \|_{L^\infty((\tau,T);L^1(\R^N) \cap L^\infty(\R^N))}.
	\end{aligned} \end{equation}
	Let $0<\beta < \min\{1,2\alpha\}$. From Remark \ref{est:Holder-u}, we deduce that, for any $t \in (\tau,T)$, 
	\begin{equation} \label{est:u-Holder-x}
		\| u_i(t) \|_{C^\beta(\R^N)} \leq C (t-\tau)^{-\frac{\beta}{2\alpha}}\| u_{i,\tau}\|_{L^\infty(\R^N)} + CL_i(t-\tau)^{1-\frac{\beta}{2\alpha}}\| \bm{u}\|_{L^\infty(Q_{\tau,T})}.
	\end{equation}
	Consequently, for any $\alpha<\beta<\min\{1,2\alpha\}$ and any $t_0 \in (\tau,T)$, we have
	\begin{align} \label{est:u-Holder-far}
		\| u_i \|_{L^\infty((t_0,T);C^\beta(\R^N))} \leq C (t_0-\tau)^{-\frac{\beta}{2\alpha}}\| u_{i,\tau}\|_{L^\infty(\R^N)} + CL_i(T-\tau)^{1-\frac{\beta}{2\alpha}}\| \bm{u}\|_{L^\infty(Q_{\tau,T})},	
	\end{align}
	where $C$ depends on $N,\alpha,\beta$. Let $\varphi:[\tau,T]$ be a smooth function such that $\varphi = 0$ in $[\tau,\frac{\tau+t_0}{2}]$, $\varphi = 1$ in $[t_0,T]$ and put $v_i=\varphi u_i$. Then  
	\begin{equation*}
		\begin{cases}
			\pa_t v_i + d_i(-\Delta)^{\alpha} v_i = \varphi' u_i + \varphi f_i(x,t,u), &(x,t) \in Q_{\frac{\tau+t_0}{2},T},\\
			v_i(x,\frac{\tau+t_0}{2}) = 0, &x\in\R^N,
		\end{cases}
	\end{equation*}
	Now let $\gamma=\frac{1}{2}(\beta +\min\{1,2\alpha\})$. By \cite[(A.7)]{caffarelli2013regularity}, we have
	\begin{align*}
		&\| \partial_ t v_i \|_{L^\infty((\frac{\tau+t_0}{2},T);C^\beta(\R^N))} + \| (-\Delta)^\alpha v_i \|_{L^\infty((\frac{\tau+t_0}{2},T);C^\beta(\R^N))} \\
		&	\leq C(1+ \| \varphi' u_i + \varphi f_i(x,t,\bm{u}) \|_{L^\infty((\frac{\tau+t_0}{2},T);C^\gamma(\R^N))}) \\
		&\leq C(1 +  \| \bm{u} \|_{L^\infty((\frac{\tau+t_0}{2},T);C^\gamma(\R^N))}) \\
		&\leq 
		C(1 + (t_0-\tau)^{-\frac{\beta}{2\alpha}}\| u_{i,\tau}\|_{L^\infty(\R^N)} + L_i(T-\tau)^{1-\frac{\beta}{2\alpha}}\| \bm{u}\|_{L^\infty(Q_{\tau,T})}) 
	\end{align*}
	where $C$ depends on $N,\alpha,\beta$. This implies that
	\begin{align*} 
		&\| \partial_ t u_i \|_{L^\infty(t_0,T);C^\beta(\R^N))} + \| (-\Delta)^\alpha u_i \|_{L^\infty((t_0,T);C^\beta(\R^N))} \\
		&\leq C(1 + (t_0-\tau)^{-\frac{\beta}{2\alpha}}\| u_{i,\tau}\|_{L^\infty(\R^N)} + L_i(T-\tau)^{1-\frac{\beta}{2\alpha}}\| \bm{u}\|_{L^\infty(Q_{\tau,T})}), 
	\end{align*}
	where $C$ depends on $N,\alpha,\beta$.	This, together with \eqref{est:frac-u-sys}, \eqref{est:frac-alp} and \eqref{est:u-Holder-far},  implies \eqref{lem:reg-ui}.
	\end{proof}
	
	\section{Systems with quadratic growth rates}\label{sec:quadratic}
	\subsection{Regularization with H\"older continuity}\label{subsec:reg_frac_diff}
		Let $1 \leq p < \infty$, $\mu>0$, $0 \leq \tau<T$, $u_\tau \in L^p(\R^N)$ and $f \in L^1((\tau,T);L^p(\R^N))$. Let $\{S_{\alpha,\mu}(t)\}_{t \geq 0}$ is the semigroup generated by $\mu (-\Delta)^{\alpha}$. Since $\{S_{\alpha,\mu}(t)\}_{t \geq \tau}$ is a strongly continuous semigroup of contractions on $L^p(\R^N)$ (see e.g. \cite{Kwa2017}), the initial value problem
		\begin{equation} \label{prob:linear}
			\left\{  \begin{aligned}
				\partial_t u + \mu(-\Delta)^\alpha u &= f \quad &&\text{in } Q_{\tau,T}, \\
				u(\cdot,\tau) &= u_\tau \quad &&\text{in } \R^N, 
			\end{aligned} \right.	
		\end{equation}
		admits a unique mild solution $u$ in the sense that $u \in C([\tau,T];L^p(\R^N))$ and $u$ satisfies the Duhamel's formula
		\begin{equation} \label{eq:Duhamel-1} 
			u(t) = S_{\alpha,\mu}(t-\tau)u_\tau + \int_\tau^t S_{\alpha,\mu}(t-s)f(s)ds \quad \forall t \in [\tau,T),
		\end{equation}
		 (see e.g. \cite[Page 106]{Pazy1983}). Here we ignore the $x$-dependence in the notation. Moreover, since $-(-\Delta)^\alpha$ is $m$-accretive with dense domain (see e.g. \cite[Theorem 1.3.12]{BrezisCazenave1993}), the solution $u$ defined by \eqref{eq:Duhamel-1} satisfies  (see e.g. \cite[Lemma 1.5.3]{BrezisCazenave1993})
		 \begin{equation} \label{est:Lp-Lp} 
		 	\| u \|_{C([\tau,T];L^p(\R^N))} \leq \| u_\tau \|_{L^p(\R^N)} + \| f \|_{L^1((\tau,T);L^p(\R^N))}.
		 \end{equation}

		 \begin{remark}
		 	When $p=\infty$, we consider the space 
		 	$C_0(\R^N):=\{v \in C(\R^N): \lim_{|x| \to \infty}v(x)=0\}$.  
		 	Since $\{S_{\alpha,\mu}(t)\}_{t \geq \tau}$ is a strongly continuous semigroup of contractions on $C_0(\R^N)$, we see that, for any $u_\tau \in C_0(\R^N)$ and $f \in L^1((\tau,T);C_0(\R^N))$, the function $u$ defined by \eqref{eq:Duhamel-1} is the unique weak solution of \eqref{prob:linear}. Moreover, by \cite[Lemma 1.5.3]{BrezisCazenave1993}, $u \in C([\tau,T];C_0(\R^N))$ and
		 	$$ \| u \|_{C([\tau,T];C_0(\R^N))} \leq \| u_\tau \|_{C_0(\R^N)} + \| f \|_{L^1((\tau,T);C_0(\R^N))}.
		 	$$
		 \end{remark}

		 \begin{remark}
		 	It follows from \eqref{eq:Duhamel-1} and \eqref{est:Lp-S} that if $u_\tau \in L^p(\R^N)$ and $f \in L^p(Q_{\tau,T})$ for $1 \leq p < \infty$, then the unique solution $u$ of \eqref{prob:linear} satisfies
		 	\begin{equation} \label{est:mild-1}
		 		\| u(t) \|_{L^p(\R^N)} \leq C( \| u_\tau \|_{L^p(\R^N)} + (t-\tau)^{\frac{p-1}{p}}\| f \|_{L^p(Q_{\tau,t})}) \quad \forall t \in (\tau,T),
		 	\end{equation}
		 	where $C$ depends only on $N,\alpha,\mu,p$. 
		 	Similarly, if $u_0 \in C_0(\R^N)$ and $f \in C([\tau,T];C_0(\R^N))$ then the unique solution $u$ of \eqref{prob:linear} satisfies
		 	\begin{equation*} 
		 		\| u(t) \|_{L^\infty(\R^N)} \leq C( \| u_\tau \|_{L^\infty(\R^N)} + (t-\tau)\| f \|_{L^\infty(Q_{\tau,t} )} ) \quad \forall t \in (\tau,T)
		 	\end{equation*}
		 	where $C$ depends only on $N,\alpha,\mu$. 
		 	Consequently, for $1 \leq p \leq \infty$,
		 	\begin{equation} \label{est:mild-1b}
		 		\| u \|_{L^p(Q_{\tau,T})} \leq C_{T-\tau}( \| u_\tau \|_{L^p(\R^N)} +\| f \|_{L^p(Q_{\tau,T})} ),
		 	\end{equation}
		 	where $C_{T-\tau}$ depends only on $N,\alpha,p$ and $T-\tau$.
		 \end{remark}
		 
		 Moreover, by the same argument as in the proof of Lemma \ref{lem:reg-sys}, if $f\in L^\infty((\tau,T);C^{\gamma}(\R^N))$ for some $\gamma \in (0,1)$ then $u$ enjoys the regularity $u(t), \partial_tu(t), (-\Delta)^\alpha u(t) \in C^{\beta}(\R^N)$ for each $t\in (\tau,T)$ any $\beta \in (0,\min\{\gamma,2\alpha\})$.
%

	\begin{theorem} \label{th:regular-1}
		Assume $\mu >0$, $\alpha<\theta<1$,  $\frac{N}{\alpha}<p \leq \infty$, $\tau \in [0,T)$, $u_\tau \in C^{\theta}(\R^N) \cap L^p(\R^N)$ and $f \in L^\infty((\tau,T);L^p(\R^N))$. Let $u$ be the mild solution to problem
		\begin{equation}\label{prob:QtauT}
			\begin{cases}
				\partial_t u + \mu(-\Delta)^\alpha u = f \quad \text{in }  Q_{\tau,T},\\
				u(x,\tau) = u_\tau(x), \quad x\in\R^N.
			\end{cases}
		\end{equation}
		Assume that
		\begin{equation} \label{assump:Holder} |u(x,t) - u(y,t)| \leq H|x-y|^{\gamma} \quad \forall x,y \in \R^N, \, t \in (\tau,T),
		\end{equation}
		for some $H > 0$ and $\gamma \in [0,\alpha)$. Then 
		\begin{equation} \label{est:interpolation-1}
			\|(-\Delta)^{\frac{\alpha}{2}} u \|_{L^\infty(Q_{\tau,T})} \lesssim \|u_\tau\|_{C^{\theta}(\R^N)} +  \| f \|_{L^\infty((\tau,T);L^p(\R^N))}^{\frac{(\alpha-\gamma)p}{2\alpha p - N -\gamma p} }H^{\frac{\alpha p - N}{2\alpha p - N - \gamma p}}.
		\end{equation}
	\end{theorem}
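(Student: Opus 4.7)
Fix $(x_0, t_0) \in Q_{\tau, T}$ and, for each $\delta \in (0, t_0-\tau]$ to be chosen, apply the Duhamel formula on the shorter interval $[t_0-\delta, t_0]$:
\[
u(x_0, t_0) = [S_{\alpha,\mu}(\delta) u(\cdot, t_0-\delta)](x_0) + \int_{t_0-\delta}^{t_0}[S_{\alpha,\mu}(t_0-s) f(\cdot, s)](x_0)\,ds,
\]
so that $(-\Delta)^{\alpha/2} u(x_0, t_0) = J_1(\delta) + J_2(\delta)$. The strategy is to bound each piece, optimize the resulting two-term estimate in $\delta$, and fall back to the initial datum in the regime when $t_0-\tau$ is too small for the optimal $\delta$ to fit in.

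For $J_1$, transfer $(-\Delta)^{\alpha/2}$ onto the kernel: $J_1 = [(-\Delta)^{\alpha/2} K_\alpha(\cdot, \mu\delta)] * u(\cdot, t_0-\delta)\,(x_0)$. Since $(-\Delta)^{\alpha/2}$ is a Fourier multiplier vanishing at the origin, the kernel $(-\Delta)^{\alpha/2}K_\alpha(\cdot, \mu\delta)$ has zero average, so I can subtract the constant $u(x_0, t_0-\delta)$ freely inside the convolution; combining \eqref{assump:Holder} with the kernel estimate \eqref{est:fracfunda} and the scaling substitution $z = (\mu\delta)^{1/(2\alpha)}w$,
\[
|J_1(\delta)| \lesssim H \int_{\R^N}\bigl((\mu\delta)^{\tfrac{1}{2\alpha}} + |z|\bigr)^{-(N+\alpha)} |z|^\gamma \,dz \asymp H\, \delta^{\frac{\gamma-\alpha}{2\alpha}},
\]
the integral converging at infinity precisely because $\gamma < \alpha$. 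For $J_2$, H\"older's inequality in $x$ with conjugate exponents $p$ and $q = p/(p-1)$ and the same kernel bound give $\|(-\Delta)^{\alpha/2}K_\alpha(\cdot, T)\|_{L^q(\R^N)} \asymp T^{-(N+\alpha p)/(2\alpha p)}$; the assumption $p > N/\alpha$ makes the ensuing time integral convergent, yielding
\[
|J_2(\delta)| \lesssim \|f\|_{L^\infty((\tau,T);L^p(\R^N))} \, \delta^{\frac{\alpha p - N}{2\alpha p}}.
\]

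The two-term sum $H\,\delta^{-(\alpha-\gamma)/(2\alpha)} + F\,\delta^{(\alpha p - N)/(2\alpha p)}$, with $F := \|f\|_{L^\infty((\tau,T);L^p(\R^N))}$, is minimized at $\delta^\ast = (H/F)^{2\alpha p / [(2\alpha-\gamma)p - N]}$ and its common optimal value is exactly $H^a F^b$ with $a = (\alpha p - N)/(2\alpha p - N - \gamma p)$, $b = (\alpha-\gamma)p/(2\alpha p - N - \gamma p)$, reproducing the exponents in \eqref{est:interpolation-1} (the identity $a+b = 1$ matches the natural parabolic scaling of the inequality). If $\delta^\ast \le t_0-\tau$ I take $\delta = \delta^\ast$ and conclude. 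Otherwise I apply Duhamel from $\tau$ itself: the initial-data term becomes $[S_{\alpha,\mu}(t_0-\tau)(-\Delta)^{\alpha/2} u_\tau](x_0)$, and a direct pointwise splitting of the singular integral defining $(-\Delta)^{\alpha/2} u_\tau$, using $\theta > \alpha$, gives $\|(-\Delta)^{\alpha/2} u_\tau\|_{L^\infty(\R^N)} \lesssim \|u_\tau\|_{C^\theta(\R^N)}$, which combined with $L^\infty$-contractivity of the semigroup provides the $\|u_\tau\|_{C^\theta}$ contribution; meanwhile, since $t_0-\tau \le \delta^\ast$, the Duhamel integral in $f$ is $\lesssim F(t_0-\tau)^{(\alpha p - N)/(2\alpha p)} \le F(\delta^\ast)^{(\alpha p - N)/(2\alpha p)} = H^a F^b$. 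The delicate step is the zero-mean cancellation in $J_1$: with only $\gamma < \alpha$ H\"older regularity, the principal-value integral defining $(-\Delta)^{\alpha/2} u(\cdot, t_0-\delta)$ need not converge absolutely, so one must place the fractional derivative onto the smoothed heat kernel rather than on $u$ itself; the remaining work is a one-parameter optimization layered on top of the kernel estimates already gathered in the preliminaries.
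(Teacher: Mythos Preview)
Your argument is correct and the exponents check out. The route, however, differs from the paper's. You introduce a \emph{time-window} parameter $\delta\in(0,t_0-\tau]$, restart Duhamel at $t_0-\delta$, and balance the smoothing term $J_1\lesssim H\delta^{-(\alpha-\gamma)/(2\alpha)}$ against the forcing term $J_2\lesssim F\delta^{(\alpha p-N)/(2\alpha p)}$, with a separate fallback to the full interval when $\delta^\ast>t_0-\tau$. The paper instead keeps the Duhamel formula on all of $[\tau,t]$ but inserts an artificial \emph{damping} parameter $k>0$: rewriting the equation as $\partial_t u+\mu(-\Delta)^\alpha u+ku=f+ku$ gives
\[
u(t)=e^{-k(t-\tau)}S_{\alpha,\mu}(t-\tau)u_\tau+\int_\tau^t e^{-k(t-s)}S_{\alpha,\mu}(t-s)\bigl(f(s)+ku(s)\bigr)\,ds,
\]
and after the same zero-mean subtraction one obtains two terms scaling like $Fk^{N/(2\alpha p)-1/2}$ and $Hk^{(\alpha-\gamma)/(2\alpha)}$, to be minimized over $k>0$. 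The two parametrizations are dual ($\delta\sim 1/k$): the exponential weight effectively localizes the time integral to a window of size $\sim 1/k$. The paper's version is slightly cleaner in that the optimization over $k\in(0,\infty)$ requires no case distinction for short times, while your version is more geometrically transparent (literal short-time smoothing) and avoids rewriting the equation. Both arguments hinge on the same kernel estimate \eqref{est:fracfunda} and on placing $(-\Delta)^{\alpha/2}$ on the heat kernel rather than on the merely $\gamma$-H\"older $u$, which you correctly flag as the delicate point.
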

	\begin{proof} 
		Denote by $K_{\alpha,\mu}$ the heat kernel associated to the operator $\mu (-\Delta)^{\alpha}$. Then we have $K_{\alpha,\mu}(x,t) =K_\alpha(x,\mu t)$ with $K_\alpha$ defined in \eqref{def:Kalpha}.  Moreover,
		$S_{\alpha,\mu}(t)\varphi := K_{\alpha,\mu} (\cdot,t) * \varphi$ for $t>0$. 
		Let $k>0$ to be made precisely later on. Since $u$ is a solution of \eqref{prob:QtauT}, it is also a solution of 
		\begin{equation} \label{prob:linear-kappa}
			\left\{  \begin{aligned}
				\partial_t u + \mu (-\Delta)^\alpha u + k u &= f + k u \quad &&\text{in } Q_{\tau,T}, \\
				u(\cdot,\tau) &= u_\tau \quad &&\text{in } \R^N,  
			\end{aligned} \right.	
		\end{equation}	
		namely 
		\begin{equation} \label{eq:Duhamel-k} u(x,t) = e^{-k t}S_{\alpha,\mu}(t-\tau)u_\tau(x) + \int_\tau^t e^{-k(t-s)} S_{\alpha,\mu}(t-s)(f(x,s) + ku(x,s))ds, \quad \forall (x,t) \in Q_{\tau,T}.
		\end{equation}

		Since $u_\tau \in C^{\theta}(\R^N)$ with $\alpha<\theta<1$, by Lemma \ref{lem:fracDeltaSphi-Holder} with $\beta=\alpha/2$, we deduce that 
		\begin{equation}  \label{est:frac-sol-homogeneous} \| (-\Delta)^{\frac{\alpha}{2} }S_{\alpha,\mu}(t-\tau) u_\tau\|_{C^{\theta-\alpha}(\R^N)} \leq C \| u_\tau \|_{C^{\theta}(\R^N)}, \quad \forall t \in (\tau,T).
		\end{equation}
		
{Next, by Lemmata  \ref{lem:interchange-2}, \ref{lem:interchange-3}, \ref{lem:HolderHolder} (if $p=\infty$) and Lemmata \ref{lem:interchange}--\ref{lem:interchange-2} (if $p<\infty$), and  the fact that 
$\int_{\R^N}(-\Delta)^{\frac{\alpha}{2}} K_{\alpha,\mu}(x-y,t-s)dy=0$, we obtain
		\begin{equation} \label{est:reg-alpha/2-1} \begin{split}
				&(-\Delta)^{\frac{\alpha}{2}}\int_\tau^t e^{-k(t-s)} [S_{\alpha,\mu}(t-s)(f(\cdot,s)+ku(\cdot,s))](x)ds \\
				&= \int_\tau^t e^{-k(t-s)} (-\Delta)^{\frac{\alpha}{2}}[S_{\alpha,\mu}(t-s)(f(\cdot,s)+ku(\cdot,s))](x)ds  \\
				&=\int_\tau^t e^{-k(t-s)} \int_{\R^N} (-\Delta)^{\frac{\alpha}{2}} K_{\alpha,\mu}(x-y,t-s) [f(y,s)+k(u(y,s)-u(x,s))]dyds. 
		\end{split}\end{equation}	
		This, together with \eqref{est:fracfunda} and the H\"older continuity assumption \eqref{assump:Holder} on $u$, implies 
		\begin{equation} \label{interchange-2}  \begin{split}
				&\left|(-\Delta)^{\frac{\alpha}{2}}\int_\tau^t e^{-k(t-s)} S_{\alpha,\mu}(t-s)[f(x,s) + k u(x,s)]ds\right|\\
				& \leq C\| f \|_{L^\infty((\tau,T);L^p(\R^N))}\int_\tau^t e^{-k(t-s)} \left(\int_{\R^N} [(t-s)^{\frac{1}{2\alpha}} + |x-y| ]^{-(N+\alpha)p'}dy\right)^{\frac{1}{p'} } ds \\
				& + CkH\int_\tau^t e^{-k(t-s)} \int_{\R^N} [(t-s)^{\frac{1}{2\alpha}} + |x-y| ]^{-(N+\alpha)}|x-y|^{\gamma}dyds \\
				&=: I_1 + I_2,
		\end{split}\end{equation}	 		
where $1/p + 1/p' = 1$. By the change of variable $z=(t-s)^{-\frac{1}{2\alpha}}(y-x)$, we have
		\begin{align*}
			I_1 &= C\| f \|_{L^\infty((\tau,T);L^p(\R^N))}\int_\tau^t e^{-k(t-s)}(t-s)^{\frac{-(N+\alpha)p'+N}{2\alpha p'}} \left(\int_{\R^N}(1+|z|)^{-(N+\alpha)p'}dz\right)^{\frac{1}{p'}}ds \\
			&\leq \tilde A_1\| f \|_{L^\infty((\tau,T);L^p(\R^N))}k^{\frac{N}{2\alpha p} -  \frac{1}{2}}.
		\end{align*}
		Similarly, we have
		$$
		I_2 \leq CkH\int_\tau^t e^{-k(t-s)} (t-s)^{-\frac{\alpha-\gamma}{2\alpha}} \int_{\R^N} (1 + |z|)^{-(N+\alpha-\gamma)}dzds \leq \tilde A_2 H k^{\frac{\alpha-\gamma}{2\alpha}}.
		$$
		Combining the above estimates, we derive
		\begin{align*}
			&\left|(-\Delta)^{\frac{\alpha}{2}}\int_\tau^t e^{-k(t-s)} S_{\alpha,\mu}(t-s)[f(x,s) + k u(x,s)]ds\right| \leq \tilde A_1\| f \|_{L^\infty((\tau,T);L^p(\R^N))}k^{\frac{N}{2\alpha p}-\frac{1}{2}} + \tilde A_2 H k^{\frac{\alpha-\gamma}{2\alpha}}.
		\end{align*}	
		By minimizing the right hand side of the above estimate over $k>0$, 
		we derive that
		\begin{equation} \label{est:frac-sol-nonho-Lp}
			\left|(-\Delta)^{\frac{\alpha}{2}}\int_\tau^t e^{-k(t-s)} S_\alpha(t-s)[f(x,s) + k u(x,s)]ds\right| \leq \tilde A_3 \| f \|_{L^\infty((\tau,T);L^p(\R^N))}^{\frac{(\alpha-\gamma)p}{2\alpha p - N -\gamma p} }H^{\frac{\alpha p - N}{2\alpha p - N - \gamma p}}.
		\end{equation}
		From formula \eqref{eq:Duhamel-k} and estimates \eqref{est:frac-sol-homogeneous}, \eqref{est:frac-sol-nonho-Lp}, we derive \eqref{est:interpolation-1}. }
	\end{proof}

	\begin{theorem} \label{th:inter-2}
		Let $u$ be a mild solution of \eqref{prob:QtauT} in $Q_{\tau,T}$ with $u_\tau=0$ and assume $u$ satisfies \eqref{assump:Holder} for some $\gamma \in [0,\alpha)$. 
		
		\noindent	(i) Assume $f \in L^\infty((\tau,T);C^{\theta}(\R^N))$ for $\alpha<\theta<1$. Then
		\begin{equation}\label{est:1}
			\| (-\Delta)^\alpha u\|_{L^\infty(Q_{\tau,T})} \leq C	\| (-\Delta)^{\frac \alpha 2}u\|_{L^\infty(Q_{\tau,T})}^{\frac{1}{2}} \|(-\Delta)^{\frac\alpha 2}f\|_{L^\infty(Q_{\tau,T})}^{\frac{1}{2}},
		\end{equation}
		\begin{equation} \label{est:regular-1-0} 
			\| (-\Delta)^\alpha u\|_{L^\infty(Q_{\tau,T})} \leq C\| f \|_{L^\infty((\tau,T);C^{\theta}(\R^N))}^{\frac{1}{2}+\frac{\alpha-\gamma}{2(2\alpha-\gamma)}}H^{\frac{\alpha}{2(2\alpha-\gamma)}}.
		\end{equation}
		
		\noindent	(ii) Assume $f \in L^\infty((\tau,T);H^{\alpha,p}(\R^N))$ for $\frac{N}{\alpha}<p<\infty$. Then
		\begin{equation} \label{est:regular-1-3} 
			\| (-\Delta)^\alpha u \|_{L^\infty(Q_{\tau,T})} 
			\leq  C \| f \|_{L^\infty((\tau,T);H^{\alpha,p}(\R^N))}^{\frac{3\alpha p - N - 2\gamma p}{2(2\alpha p - N -\gamma p)} } H^{\frac{\alpha p - N}{2(2\alpha p - N - \gamma p)}}.
		\end{equation}
		Here the spaces $H^{\alpha,p}(\R^N)$ are defined in 	\eqref{H^alpha}.
	\end{theorem}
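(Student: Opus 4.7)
The strategy is to apply Theorem \ref{th:regular-1} to the auxiliary function $V := (-\Delta)^{\alpha/2} u$. Commuting $(-\Delta)^{\alpha/2}$ with the semigroup $S_{\alpha,\mu}$ in the Duhamel formula \eqref{eq:Duhamel-1} and using $u_\tau = 0$, the function $V$ is a mild solution of
\[
\partial_t V + \mu (-\Delta)^\alpha V = (-\Delta)^{\alpha/2} f \quad \text{in } Q_{\tau,T}, \qquad V(\cdot,\tau) = 0,
\]
where the forcing $(-\Delta)^{\alpha/2} f$ is admissible thanks to the regularity of $f$ (H\"older regularity in part (i), Bessel-potential regularity in part (ii)).

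To derive \eqref{est:1}, I apply Theorem \ref{th:regular-1} to $V$ with the trivial choice $\gamma = 0$ and $p = \infty$. The crude inequality $|V(x,t) - V(y,t)| \leq 2\|V\|_{L^\infty(Q_{\tau,T})} \cdot |x - y|^0$ plays the role of \eqref{assump:Holder} with $H$ replaced by $2\|V\|_{L^\infty(Q_{\tau,T})}$. Specializing \eqref{est:interpolation-1} to $\gamma = 0$, $p = \infty$, and $V(\tau) = 0$ yields
\[
\|(-\Delta)^\alpha u\|_{L^\infty(Q_{\tau,T})} = \|(-\Delta)^{\alpha/2} V\|_{L^\infty(Q_{\tau,T})} \lesssim \|(-\Delta)^{\alpha/2} f\|_{L^\infty(Q_{\tau,T})}^{1/2} \|(-\Delta)^{\alpha/2} u\|_{L^\infty(Q_{\tau,T})}^{1/2},
\]
which is precisely \eqref{est:1}. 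The a priori finiteness of the right-hand side is guaranteed by applying Theorem \ref{th:regular-1} to $u$ itself with $p = \infty$ and the given $\gamma$, producing
\[
\|(-\Delta)^{\alpha/2} u\|_{L^\infty(Q_{\tau,T})} \lesssim \|f\|_{L^\infty(Q_{\tau,T})}^{(\alpha-\gamma)/(2\alpha-\gamma)} H^{\alpha/(2\alpha-\gamma)}.
\]
Substituting this bound into \eqref{est:1}, invoking the elementary estimate $\|(-\Delta)^{\alpha/2} f\|_{L^\infty(\R^N)} \lesssim \|f\|_{C^\theta(\R^N)}$ (valid for $\theta > \alpha$ by splitting the singular integral into the contributions from $|y - x| < 1$ and $|y - x| \geq 1$), and using $\|f\|_{L^\infty} \leq \|f\|_{C^\theta}$, the arithmetic of exponents yields exactly \eqref{est:regular-1-0}.

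Part (ii) is handled identically, only with the $L^p$-version of Theorem \ref{th:regular-1}. Applying that theorem to $u$ with the given finite $p > N/\alpha$ and $u_\tau = 0$ gives
\[
\|(-\Delta)^{\alpha/2} u\|_{L^\infty(Q_{\tau,T})} \lesssim \|f\|_{L^\infty((\tau,T);L^p(\R^N))}^{(\alpha-\gamma)p/(2\alpha p - N -\gamma p)} H^{(\alpha p - N)/(2\alpha p - N -\gamma p)}.
\]
Substituting into \eqref{est:1} and bounding both $\|f\|_{L^p}$ and $\|(-\Delta)^{\alpha/2} f\|_{L^\infty}$ by $\|f\|_{H^{\alpha,p}}$ (in accordance with the definition \eqref{H^alpha} and the embedding available for $p > N/\alpha$), the exponent of $H$ becomes $\tfrac{\alpha p - N}{2(2\alpha p - N - \gamma p)}$, while the total exponent of $\|f\|_{H^{\alpha,p}}$ is $\tfrac{1}{2} + \tfrac{(\alpha-\gamma)p}{2(2\alpha p - N - \gamma p)} = \tfrac{3\alpha p - N - 2\gamma p}{2(2\alpha p - N - \gamma p)}$, matching \eqref{est:regular-1-3}.

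The principal obstacle is the rigorous justification that $V$ satisfies the Duhamel formula with forcing $(-\Delta)^{\alpha/2} f$ and zero initial data, so that Theorem \ref{th:regular-1} is legitimately applicable; this requires commuting $(-\Delta)^{\alpha/2}$ with the semigroup and the time integral, handled via Fourier multiplier manipulations together with an approximation by smooth $f$. A secondary technical point, relevant only for part (ii), is the estimate $\|(-\Delta)^{\alpha/2} f\|_{L^\infty(\R^N)} \lesssim \|f\|_{H^{\alpha,p}(\R^N)}$ for $p > N/\alpha$, which depends on the precise definition of $H^{\alpha,p}$ chosen in \eqref{H^alpha}. Once these matters are settled, the proof reduces to the routine exponent arithmetic carried out above.
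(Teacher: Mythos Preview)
Your argument for part (i) is correct and is in fact a cleaner repackaging of the paper's proof: whereas the paper re-derives the $k$-shifted Duhamel estimate for $(-\Delta)^\alpha u$ from scratch (formula \eqref{formula:Duhamel-23} and the minimization over $k$), you recognize that this computation is precisely Theorem \ref{th:regular-1} applied to $V=(-\Delta)^{\alpha/2}u$ with the trivial H\"older exponent $\gamma=0$ and $H=2\|V\|_{L^\infty(Q_{\tau,T})}$. The commutation $(-\Delta)^{\alpha/2}S_{\alpha,\mu}(t)=S_{\alpha,\mu}(t)(-\Delta)^{\alpha/2}$ and the interchange with the time integral are handled by Lemmata \ref{lem:interchange-2}--\ref{lem:HolderHolder} (exactly how the paper reaches \eqref{formula:Duhamel-23}), so your ``principal obstacle'' is already resolved in the appendix.

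For part (ii), however, there is a genuine gap. The embedding you need,
\[
\|(-\Delta)^{\alpha/2}f\|_{L^\infty(\R^N)} \lesssim \|f\|_{H^{\alpha,p}(\R^N)} \qquad (p<\infty),
\]
is \emph{false}: membership in $H^{\alpha,p}$ only guarantees $(-\Delta)^{\alpha/2}f\in L^p$, and $L^p\not\hookrightarrow L^\infty$. Sobolev embedding gives $H^{\alpha,p}\hookrightarrow C^{\alpha-N/p}$, but $\alpha-N/p<\alpha$, which is not enough to place $(-\Delta)^{\alpha/2}f$ in $L^\infty$. So you cannot simply feed \eqref{est:1} as a black box, and this is not a ``secondary technical point'' but the heart of the matter. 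The paper instead keeps $\|(-\Delta)^{\alpha/2}f(\cdot,s)\|_{L^p(\R^N)}$ inside the $k$-shifted Duhamel integral and uses the $L^p\!\to\!L^\infty$ smoothing of $(-\Delta)^{\alpha/2}S_{\alpha,\mu}(t-s)$. In your own framework, the fix is to apply Theorem \ref{th:regular-1} to $V$ with the \emph{finite} exponent $p$ (forcing $(-\Delta)^{\alpha/2}f\in L^\infty_tL^p_x$, which \emph{does} follow from $f\in L^\infty_t H^{\alpha,p}_x$) and $\gamma=0$, rather than with $p=\infty$; the resulting intermediate bound involves $\|(-\Delta)^{\alpha/2}f\|_{L^\infty_tL^p_x}$ and can then be combined with the $L^p$ version of Theorem \ref{th:regular-1} for $u$.
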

	\begin{proof}

		Let $k>0$. By using \eqref{eq:Duhamel-k}  with $u_\tau=0$, we have 
		\begin{equation} \label{formula:Duhamel-2} u(x,t) = \int_\tau^t e^{-k(t-s)}S_{\alpha,\mu}(t-s)(f(x,s) + ku(x,s))ds.
		\end{equation}
		
		(i) Assume $f \in L^\infty((\tau,T);C^{\theta}(\R^N))$ for $\alpha<\theta<1$. Since $\theta>\alpha$, it follows that $(-\Delta)^{\frac{\alpha}{2}}f \in L^\infty((\tau,T);L^\infty(\R^N))$. By \eqref{est:mild-1b}, $u \in L^\infty(Q_{\tau,T})$ and by Theorem \ref{th:regular-1}, $(-\Delta)^{\frac{\alpha}{2}}u \in L^\infty(Q_{\tau,T})$. 
		From \eqref{formula:Duhamel-2}, Lemma \ref{lem:interchange-3} with $\beta=\alpha/2$, Lemma \ref{lem:HolderHolder} with $\beta=\alpha/2$ and Lemma \ref{lem:interchange-2}, we obtain
		\begin{equation} \label{formula:Duhamel-23} \begin{aligned}
				(-\Delta)^\alpha u(x,t) 
				=  \int_\tau^t e^{-k(t-s)}(-\Delta)^{\frac{\alpha}{2}}S_{\alpha,\mu}(t-s)((-\Delta)^{\frac{\alpha}{2}}f(x,s) + k(-\Delta)^{\frac{\alpha}{2}}u(x,s))ds. 
		\end{aligned} \end{equation}
		Consequently, by using estimate \eqref{est:Lp-DS} with $\beta=\alpha/2$, $r=p=\infty$, we have, for any $t \in (\tau,T)$,
		\begin{align} \nonumber
			&\| (-\Delta)^\alpha u(\cdot,t) \|_{L^\infty(\R^N)} \\ \nonumber
			&\leq C\int_\tau^t e^{-k(t-s)} (t-s)^{-\frac{1}{2}} (\| (-\Delta)^{\frac{\alpha}{2}} f(\cdot,s)\|_{L^\infty(\R^N)} + k \| (-\Delta)^{\frac{\alpha}{2}} u(\cdot,s)\|_{L^\infty(\R^N)}) ds\\ \nonumber  
			&\leq Ck^{-\frac{1}{2}}\|(-\Delta)^{\frac\alpha 2} f \|_{L^\infty(Q_{\tau,T})} + Ck^{\frac{1}{2}} \| (-\Delta)^{\frac{\alpha}{2}}u \|_{L^\infty(Q_{\tau,T})}.
		\end{align}
		By minimizing over $k>0$, we deduce, for any $t \in (\tau,T)$, 
		\begin{equation*} 
			\| (-\Delta)^\alpha u(\cdot,t) \|_{L^\infty(\R^N)} \leq C\|(-\Delta)^{\frac\alpha 2} f \|_{L^\infty(Q_{\tau,T})}^{\frac{1}{2}} \| (-\Delta)^{\frac{\alpha}{2}}u \|_{L^\infty(Q_{\tau,T})}^{\frac{1}{2}},
		\end{equation*}
		which proves \eqref{est:1}. By virtue of Theorem \ref{th:regular-1}, we derive \eqref{est:regular-1-0} as
		\begin{align*}
			\| (-\Delta)^\alpha u \|_{L^\infty(Q_{\tau,T})} &\leq C\| f \|_{L^\infty((\tau,T);C^{\theta}(\R^N))}^{\frac{1}{2}} \| f \|_{L^\infty(Q_{\tau,T})}^{\frac{\alpha-\gamma}{2(2\alpha-\gamma)} }H^{\frac{\alpha}{2(2\alpha-\gamma)}} \leq C\| f \|_{L^\infty((\tau,T);C^{\theta}(\R^N))}^{\frac{1}{2}+\frac{\alpha-\gamma}{2(2\alpha-\gamma)}}H^{\frac{\alpha}{2(2\alpha-\gamma)}}.
		\end{align*}  
		
		(ii) Assume $f \in L^\infty((\tau,T);H^{\alpha,p}(\R^N))$ for $\frac{N}{\alpha}<p<\infty$. By {estimate \eqref{est:mild-1}} and Lemma \ref{lem:interchange} with $\beta=\alpha/2$, $u \in L^\infty((\tau,T);H^{\alpha,p}(\R^N))$. 
		Therefore, by Lemma \ref{lem:interchange} with $\beta=\alpha/2$ and Lemma \ref{lem:interchange-2} (i), we deduce that $u$ satisfies \eqref{formula:Duhamel-23}. 
		Consequently, by using again estimate \eqref{est:fracDS(t-s)f-1}  with $\beta=\alpha/2$, and estimate \eqref{est:Lp-DS} with $\beta=\alpha/2$, $r=p=\infty$, we have, , for any $t \in (\tau,T)$, 
		\begin{align} \nonumber
			&\| (-\Delta)^\alpha u(\cdot,t) \|_{L^\infty(\R^N)} \\ \nonumber
			&\leq C\int_\tau^t e^{-k(t-s)} (t-s)^{-\frac{1}{2}} (\| (-\Delta)^{\frac{\alpha}{2}} f(\cdot,s)\|_{L^p(\R^N)} + k \| (-\Delta)^{\frac{\alpha}{2}} u(\cdot,s)\|_{L^\infty(\R^N)}) ds\\ \nonumber 
			&\leq Ck^{-\frac{1}{2}}\| f \|_{L^\infty((\tau,T);\dot{H}^{\alpha,p}(\R^N))} + Ck^{\frac{1}{2}} \| (-\Delta)^{\frac{\alpha}{2}}u \|_{L^\infty(Q_{\tau,T})}.
		\end{align}
		By minimizing over $k>0$, we deduce
		\begin{equation*} 
			\| (-\Delta)^\alpha u(\cdot,t) \|_{L^\infty(\R^N)} \leq C\| f \|_{L^\infty((\tau,T);\dot{H}^{\alpha,p}(\R^N))}^{\frac{1}{2}} \| (-\Delta)^{\frac{\alpha}{2}}u \|_{L^\infty(Q_{\tau,T})}^{\frac{1}{2}}.
		\end{equation*}
		This and Theorem \ref{th:regular-1} imply
		\begin{equation*} 
			\begin{aligned}
				\| (-\Delta)^\alpha u \|_{L^\infty(Q_{\tau,T})} &\leq  C \| f \|_{L^\infty((\tau,T);\dot{H}^{\alpha,p}(\R^N))}^{\frac{1}{2}} \| f \|_{L^\infty((\tau,T);L^p(\R^N))}^{\frac{(\alpha-\gamma)p}{2(2\alpha p - N -\gamma p)} }H^{\frac{\alpha p - N}{2(2\alpha p - N - \gamma p)}} \\
				&\leq  C \| f \|_{L^\infty((\tau,T);H^{\alpha,p}(\R^N))}^{\frac{3\alpha p - N - 2\gamma p}{2(2\alpha p - N -\gamma p)} } H^{\frac{\alpha p - N}{2(2\alpha p - N - \gamma p)}}.
		\end{aligned} \end{equation*}
		The proof is complete.
	\end{proof}
	
	Let $\dot{H}^\alpha(\R^N)$ be the space defined in \eqref{H^alphadot} (with $p=2$).
	
	\begin{proposition}[Feedback estimate] \label{prop:feedback}
		Let $0 \leq \tau<T$. Assume 
		
		(i) $\bm{w}_\tau=(w_{1,\tau},\ldots,w_{m,\tau}) \in ((L^1 \cap L^\infty \cap \dot{H}^{\alpha})(\R^N))^m$.
		
		(ii) $\bm{F}=(F_1,\ldots,F_m) \in ((L^1 \cap L^\infty)(Q_{\tau,T}))^m$ such that
		\begin{equation} \label{mass_conservation_Fi} \sum_{i=1}^m F_i(x,t)=0 \quad \text{for all } (x,t) \in Q_{\tau,T}. 
		\end{equation}
		(iii) $\bm{\psi}=(\psi_1,\ldots,\psi_m)$ such that $0 \leq \psi_i \in L^1(Q_{\tau,T}) \cap L^\infty((\tau,T);C^\theta(\R^N))$ for some $\theta \in (\alpha,1)$ (hence  $(-\Delta)^{\frac{\alpha}{2}}\psi_i \in L^\infty(Q_{\tau,T})$) and
		\begin{align} \label{est:L1_psi} 
			&\| \psi_i \|_{L^1(Q_{\tau,T})} \leq \Theta_1, \\ \label{est:LxinftyLt1psi}
			&\left\| \int_{\tau}^T \psi_i(x,s)ds \right \|_{L^\infty(\R^N)} \leq \Theta_2, \quad \forall i=1,2,\ldots,m,
		\end{align}
		where $\Theta_1,\Theta_2$ are positive constants depending only on $N,m,\alpha$ and $T-\tau$. Assume $\bm{w}=(w_1,\ldots,w_1)$ with $w_i \geq 0$ is a mild solution of 
		\begin{equation}\label{sys:psi_tauT}
			\begin{cases}
				\pa_t w_i(x,t) + d_i(-\Delta)^{\alpha}w_i(x,t) = \psi_i(x,t) + F_i(x,t), & (x,t) \in Q_{\tau,T}, i=1,\ldots, m,\\
				w_i(x,\tau) =w_{i,\tau}(x), & x\in\R^N, i=1,\ldots, m.
			\end{cases}
		\end{equation}
		Then there holds
		\begin{equation} \label{est:feedback}
			\sum_{i=1}^m\| w_i \|_{L^\infty(Q_{\tau,T})} \leq C  + C\sum_{i=1}^m \left( \| \psi_i \|_{L^\infty(Q_{\tau,T})}^{1-\delta} + \| (-\Delta)^{\frac{\alpha}{2}}\psi_i \|_{L^\infty(Q_{\tau,T})}^{\frac{2}{3}-\delta} 	+ \| F_i \|_{L^\infty(Q_{\tau,T})}^{\frac{1}{2}-\delta} \right), 
		\end{equation}
		where $\delta \in (0,1/4)$ and $C>0$ depend only on $N,m,\alpha$, $T-\tau$, $\Theta_1$, $\Theta_2$, $d_i$, $\| w_{i,\tau}\|_{C^\theta(\R^N)}$, $i=1,\ldots,m$.
	\end{proposition}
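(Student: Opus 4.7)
\emph{Reduction and the key mass-conservation identity.} First split $w_i = S_{i,\alpha}(t-\tau)w_{i,\tau} + \hat w_i$, where $\hat w_i$ solves \eqref{sys:psi_tauT} with zero initial data; by the smoothing estimates \eqref{est:Lp-S}--\eqref{est:Lp-DS} together with the hypotheses on $w_{i,\tau}$, the semigroup contribution only feeds the constant $C$ in \eqref{est:feedback}. Summing the $\hat w_i$-equations, using $\sum_i F_i \equiv 0$, and integrating in time, set
\[
V(x,t):=\int_\tau^t\sum_{i=1}^m d_i\hat w_i(x,s)\,ds,\qquad G(x,t):=\int_\tau^t\sum_{i=1}^m \psi_i(x,s)\,ds.
\]
Then one obtains the fundamental identity
\[
\sum_{i=1}^m\hat w_i(x,t) + (-\Delta)^\alpha V(x,t) = G(x,t),
\]
and \eqref{est:LxinftyLt1psi} yields $\|G\|_{L^\infty(Q_{\tau,T})}\leq m\Theta_2$. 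Writing $W:=\sum_i\|\hat w_i\|_{L^\infty(Q_{\tau,T})}$, the problem thus reduces to bounding $\|(-\Delta)^\alpha V\|_{L^\infty(Q_{\tau,T})}$.

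\emph{H\"older continuity of $V$.} Since $\partial_t V=\sum_i d_i \hat w_i$, the function $V$ solves the non-divergence nonlocal parabolic equation
\[
b(x,t)\,\partial_t V + (-\Delta)^\alpha V = G,\qquad b(x,t):=\frac{\sum_i \hat w_i}{\sum_i d_i \hat w_i}\in\Big[\tfrac{1}{\max_i d_i},\,\tfrac{1}{\min_i d_i}\Big],
\]
with $V(\cdot,\tau)=0$ and bounded right-hand side. The nonlocal De Giorgi--Nash--Moser theory of Caffarelli--Chan--Vasseur and Vasseur (for nonlocal parabolic equations with bounded measurable uniformly elliptic coefficients) then produces some $\gamma\in(0,\alpha)$ and a constant depending only on $\Theta_2$, $m$, $d_i$, $\alpha$ such that $V\in C^{\gamma,\gamma/(2\alpha)}(Q_{\tau,T})$.

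\emph{Interpolation and closing.} Apply the sharp interpolation
\[
\|(-\Delta)^\alpha V\|_\infty^{2}\lesssim \|(-\Delta)^{\alpha/2}V\|_\infty\,\|(-\Delta)^{3\alpha/2}V\|_\infty,
\]
and rewrite the second factor via the identity of the first step as $(-\Delta)^{3\alpha/2}V = (-\Delta)^{\alpha/2}G - \sum_i(-\Delta)^{\alpha/2}\hat w_i$, whence
\[
\|(-\Delta)^{3\alpha/2}V\|_\infty \leq (T-\tau)\sum_i\|(-\Delta)^{\alpha/2}\psi_i\|_\infty + \sum_i\|(-\Delta)^{\alpha/2}\hat w_i\|_\infty.
\]
Each $\|(-\Delta)^{\alpha/2}\hat w_i\|_\infty$ is controlled by Theorem \ref{th:regular-1} applied to the $\hat w_i$-equation with $f=\psi_i+F_i$, using the trivial H\"older bound $H\lesssim W$ (with $\gamma=0$, $p=\infty$), giving $\lesssim \|\psi_i+F_i\|_\infty^{1/2}W^{1/2}$. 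For the first factor $\|(-\Delta)^{\alpha/2}V\|_\infty$, combine the H\"older regularity from Step 2 with Theorem \ref{th:regular-1} (applied to each $\hat w_i$ at the improved H\"older exponent $\gamma<\alpha$ from Step 2 and then time-integrated through the defining formula for $V$) to obtain a sub-linear dependence on $W$ and on $\|(-\Delta)^{\alpha/2}\psi_i\|_\infty$, $\|F_i\|_\infty$. Substituting back into the interpolation bound produces an inequality of the form $W\leq C + (\text{a sub-linear polynomial in }W\text{ whose coefficients are built from the three quantities on the right of \eqref{est:feedback}})$; Young's inequality then absorbs the $W$-dependence on the right, and, after tracking the arithmetic of the exponents, gives \eqref{est:feedback} with precisely the stated exponents $1-\delta$, $2/3-\delta$, $1/2-\delta$ for any small $\delta\in(0,1/4)$.

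\emph{Main obstacle.} The principal technical step is Step 2, i.e., establishing H\"older regularity of $V$ for the nonlocal parabolic equation in non-divergence form with variable (yet uniformly elliptic) coefficient $b$, posed on the whole $\R^N$. The rest is careful exponent bookkeeping combining Theorems \ref{th:regular-1}--\ref{th:inter-2} with Young's inequality.
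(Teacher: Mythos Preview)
Your overall architecture is the paper's: pass to the time-integrated quantity $V$, use the mass-conservation identity, establish H\"older regularity of $V$ via the nonlocal De Giorgi machinery for the non-divergence equation, and then interpolate to close. Your interpolation $\|(-\Delta)^\alpha V\|_\infty^2\lesssim\|(-\Delta)^{\alpha/2}V\|_\infty\|(-\Delta)^{3\alpha/2}V\|_\infty$ together with the identity $(-\Delta)^{3\alpha/2}V=(-\Delta)^{\alpha/2}G-\sum_i(-\Delta)^{\alpha/2}\hat w_i$ is equivalent to what the paper obtains from Theorem~\ref{th:inter-2} applied to the equation $\partial_tV+(-\Delta)^\alpha V=\sum_i(d_i-1)\hat w_i+G$ (the paper's \eqref{h3}).

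There is, however, a genuine gap in your treatment of $\|(-\Delta)^{\alpha/2}V\|_\infty$. You write that one should apply Theorem~\ref{th:regular-1} ``to each $\hat w_i$ at the improved H\"older exponent $\gamma<\alpha$ from Step 2 and then time-integrate.'' But the H\"older regularity from Step 2 is for $V$, not for the individual $\hat w_i$; nothing transfers an $\gamma$-H\"older bound from $V=\int\sum d_i\hat w_i$ back to $\hat w_i$. If you only use the trivial exponent $\gamma=0$ for $\hat w_i$ (i.e.\ $H\le W$) and time-integrate, you get $\|(-\Delta)^{\alpha/2}V\|_\infty\lesssim\sum_i\|\psi_i+F_i\|_\infty^{1/2}W^{1/2}$; plugging this into your interpolation with the $F$-terms dominant gives only $W\lesssim\|F_i\|_\infty$, not $\|F_i\|_\infty^{1/2-\delta}$, and the feedback fails to close. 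The correct move (the paper's Lemma~\ref{lem:gradient_v}) is to apply Theorem~\ref{th:regular-1} \emph{directly to $V$} via equation \eqref{h3}, using the H\"older constant $H$ of $V$ that is bounded independently of $W$; this yields $\|(-\Delta)^{\alpha/2}V\|_\infty\lesssim 1+\sum_i(\|w_i\|_\infty^{\frac{\alpha-\gamma}{2\alpha-\gamma}}+\|\psi_i\|_\infty^{\frac{\alpha-\gamma}{2\alpha-\gamma}})$, which is the sublinear dependence you need.

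A second omission: before the De Giorgi step you need an a priori $L^\infty$ bound on $V$ that is independent of $W$ (the paper's Lemma~\ref{lem:boundedness_v}, obtained by a Moser-type iteration using \eqref{est:L1_psi}--\eqref{est:LxinftyLt1psi}). The H\"older constant coming out of the Caffarelli--Chan--Vasseur argument depends on $\|V\|_{L^\infty}$, so without this step your H\"older constant could in principle grow with $W$ and again prevent closure.
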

	\begin{remark}
		The system \eqref{sys:psi_tauT} can be seen as an auxiliary one of \eqref{sys}, where the feedback estimate in Proposition \ref{prop:feedback} will be applied for $\psi_i = 0$ and $\psi_i = \varphi_\tau u_i$ for a suitable cut-off function in time $\varphi_\tau$, and $F_i(\cdot): = f_i(\cdot,u(\cdot))$ is the nonlinearity. This enables us to avoid the repetition of analysis in the proof of the uniform-in-time boundedness of solutions (as in e.g. \cite{morgan2020boundedness}).
	\end{remark}
	
	To prove Proposition \ref{prop:feedback}, we need to develop some intermediate estimates.
	
	\medskip
	By summing the equations of $w_i$ in \eqref{sys:psi_tauT}, using the condition \eqref{mass_conservation_Fi}, then integrating with respect to the time variable from $\tau$ to $t$, we have
	\begin{equation}\label{h1} 
		\sum_{i=1}^{m}w_i(x,t) + (-\Delta)^{\alpha} \int_\tau^t\sum_{i=1}^md_i w_i(x,s)ds = \sum_{i=1}^{m}\int_{\tau}^t \psi_i(x,s)ds + \sum_{i=1}^{m} w_{i,\tau}(x), \quad \forall (x,t)\in Q_{\tau,T}.
	\end{equation}
	We define the following function, which plays an important role in our subsequent analysis,
	\begin{equation}\label{def_v} 
		v(x,t):=  \int_\tau^t\sum_{i=1}^m d_i w_i(x,s)ds.
	\end{equation}
	From \eqref{h1}, we have the following three equivalent equations involving $v$ as follows
	\begin{equation}\label{h2}
		\sum_{i=1}^{m} w_{i}(x,t) + (-\Delta)^{\alpha}v(x,t) = \sum_{i=1}^{m}\int_{\tau}^t \psi_i(x,s)ds +   \sum_{i=1}^{m}w_{i,\tau}(x), \quad \forall (x,t)\in Q_{\tau,T},
	\end{equation}
	\begin{equation}\label{h3}
		\partial_tv(x,t) + (-\Delta)^{\alpha}v(x,t) = \sum_{i=1}^{m}(d_i-1)w_i(x,t) + \sum_{i=1}^{m}\int_{\tau}^t \psi_i(x,s)ds  +  \sum_{i=1}^m w_{i,\tau}(x),  \, \forall (x,t)\in Q_{\tau,T},
	\end{equation}
	\begin{equation}\label{h4} 
		b(x,t)\partial_t v(x,t) + (-\Delta)^{\alpha}v(x,t) = \sum_{i=1}^{m}\int_{\tau}^t \psi_i(x,s)ds  + \sum_{i=1}^{m}w_{i,\tau}(x),  \quad \forall (x,t)\in Q_{\tau,T},
	\end{equation}
	where the function 
	\begin{equation*} 
		b(x,t):= \frac{\sum_{i=1}^{m} w_i(x,t)}{\sum_{i=1}^{m}d_i w_i(x,t)}
	\end{equation*}
	satisfies the upper and lower bounds
	\begin{equation}\label{bound_b} 
		\frac{1}{\max_{i=1,\ldots,m} d_i} \le b(x,t) \le \frac{1}{\min_{i=1,\ldots,m} d_i}, \quad \forall (x,t)\in Q_{\tau,T}.
	\end{equation}
	
	It is clear from \eqref{h2} that $L^\infty$-norm of $w_i$ can be estimated by certain norms of $v$, which will be shown in the following lemmas.
	\begin{lemma}\label{lem:boundedness_v}
		The function $v$ defined in \eqref{def_v} is bounded, i.e.
		\begin{equation}\label{bound-1}
			\|v\|_{L^\infty(Q_{\tau,T})} \leq C,
		\end{equation}
		where $C$ depends only on $N,m,\alpha$, $T-\tau$, $\Theta_1,\Theta_2$, $d_i$, $\| w_{i,\tau} \|_{(L^1 \cap L^\infty)(\R^N)}$, $i=1,\ldots,m$.
	\end{lemma}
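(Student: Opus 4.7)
The argument rests on the integrated identity \eqref{h4}, which can be written as
\[
 b(x,t)\partial_t v+(-\Delta)^{\alpha}v=h(x,t),\qquad v(\cdot,\tau)\equiv 0,
\]
with the non-negative source $h(x,t):=\sum_{i=1}^{m}\int_{\tau}^{t}\psi_i(x,s)\,ds+\sum_{i=1}^{m}w_{i,\tau}(x)$. Assumption \eqref{est:LxinftyLt1psi} together with $w_{i,\tau}\in L^{\infty}(\R^N)$ gives the quantitative pointwise bound
\[
 \|h\|_{L^{\infty}(Q_{\tau,T})}\le m\,\Theta_2+\sum_{i=1}^{m}\|w_{i,\tau}\|_{L^{\infty}(\R^N)}=:M_h,
\]
which depends only on the admissible parameters. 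Since $w_i\ge 0$ and $d_i>0$, $v\ge 0$ pointwise from \eqref{def_v}, so only an upper bound is needed.

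The plan is to construct an explicit spatially-constant supersolution. Set $\tilde M:=(\max_{1\le i\le m} d_i)\,M_h$ and $V(x,t):=\tilde M(t-\tau)$. Since $(-\Delta)^{\alpha}V\equiv 0$ and $b\ge (\max_i d_i)^{-1}$ by \eqref{bound_b}, one verifies
\[
 b(x,t)\partial_t V(x,t)+(-\Delta)^{\alpha}V(x,t)=b(x,t)\tilde M\ge M_h\ge h(x,t),
\]
so that $W:=v-V$ satisfies $W(\cdot,\tau)\equiv 0$ and the differential inequality $b\partial_t W+(-\Delta)^{\alpha}W\le 0$ in $Q_{\tau,T}$. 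A pointwise maximum principle will then yield $W\le 0$ and hence $\|v\|_{L^{\infty}(Q_{\tau,T})}\le \tilde M(T-\tau)$, which is the claimed bound.

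To justify the maximum principle I argue by contradiction: suppose $\sup_{Q_{\tau,T}}W=:M_{*}>0$. The Hölder regularity of Lemma \ref{lem:reg-sys} on every subinterval $[t_0,T]$ with $t_0>\tau$, combined with the uniform-in-time $L^{1}$-bound
\[
 \sup_{t\in[\tau,T]}\Bnorm{\sum_{i=1}^{m}w_i(\cdot,t)}_{L^{1}(\R^N)}\le \sum_{i=1}^{m}\|w_{i,\tau}\|_{L^{1}(\R^N)}+m\,\Theta_1,
\]
obtained by summing the equations in \eqref{sys:psi_tauT}, using the cancellation \eqref{mass_conservation_Fi} and \eqref{est:L1_psi}, forces $w_i(\cdot,t)\in C_{0}(\R^N)$ uniformly in $t$ on each $[t_0,T]$; hence $v(\cdot,t)\to 0$ as $|x|\to\infty$ uniformly in $t$ on such subintervals, while $V(t)\ge 0$. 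Because $W(\cdot,\tau)\equiv 0$, continuity in time confines the near-supremum to $[\tau+\delta,T]$ for some small $\delta>0$, so the supremum is attained at an interior point $(x_0,t_0)$ (the case $t_0=T$ is handled by working on $[\tau,T-\eta]$ and sending $\eta\to 0$). At $(x_0,t_0)$ one has $\partial_t W(x_0,t_0)\ge 0$ and, because $W(\cdot,t_0)$ is non-constant, the non-local evaluation $(-\Delta)^{\alpha}W(x_0,t_0)>0$ strictly, producing $b\partial_t W+(-\Delta)^{\alpha}W>0$ at $(x_0,t_0)$, a contradiction.

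The main technical obstacle is the rigorous maximum principle for the variable-coefficient non-local operator $b\partial_t+(-\Delta)^{\alpha}$, specifically the endpoint $t=T$ and the uniform spatial decay of $v$ near $t=\tau$. An alternative route that avoids the pointwise argument is a Stampacchia energy estimate: test the inequality against $W^{+}$, use the Stroock-Varopoulos inequality $\int(-\Delta)^{\alpha}W\cdot W^{+}\,dx\ge 0$, expand
\[
 \int b\,\partial_t W\cdot W^{+}\,dx=\tfrac{1}{2}\tfrac{d}{dt}\int b\,(W^{+})^{2}\,dx-\tfrac{1}{2}\int \partial_t b\,(W^{+})^{2}\,dx,
\]
control $\partial_t b\in L^{\infty}((t_0,T)\times\R^N)$ via Lemma \ref{lem:reg-sys}, and close by Gronwall starting from the zero initial value. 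Either route delivers $W\le 0$ and hence \eqref{bound-1}.
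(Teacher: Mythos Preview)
Your comparison idea is sound and is a genuinely different, more elementary route than the paper's proof, which runs a Moser-type iteration: the paper multiplies the inequality $(-\Delta)^{\alpha}v\le h$ (coming from \eqref{h2} and $w_i\ge 0$) by $v^{\ell-1}$, uses Stroock--Varopoulos plus the fractional Sobolev inequality, and iterates along an increasing sequence $\{\varrho_k\}$ to reach $L^{\infty}$. A barrier argument, if it closes, bypasses that machinery entirely.

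However, both of your proposed justifications of the maximum principle have real gaps. For the pointwise route, the strict inequality $(-\Delta)^{\alpha}W(x_0,t_0)>0$ at a non-constant maximum relies on the integral representation of $(-\Delta)^{\alpha}$ at $x_0$, which needs $W(\cdot,t_0)\in C^{2\alpha+\varepsilon}$ locally; the regularity available here (Lemma~\ref{lem:reg-sys} or parabolic smoothing from the merely bounded source $F_i$ in Proposition~\ref{prop:feedback}) only gives $C^{\beta}$ with $\beta<\min\{1,2\alpha\}$, which is insufficient for $\alpha\ge 1/2$. The identity $(-\Delta)^{\alpha}v=h-\sum_i w_i$ from \eqref{h2} furnishes a continuous representative, but continuity alone does not force the sign at the maximum. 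For the energy route, the claim $\partial_t b\in L^{\infty}$ is unsupported: since $b=\bigl(\sum w_i\bigr)/\bigl(\sum d_i w_i\bigr)$, its time derivative carries a factor $\bigl(\sum d_i w_i\bigr)^{-2}$ and can blow up wherever $\sum w_i$ vanishes; Lemma~\ref{lem:reg-sys} bounds $\partial_t w_i$ but says nothing about that degeneracy.

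Both obstacles disappear if you drop \eqref{h4} and instead combine \eqref{h2} with $\partial_t v=\sum_i d_i w_i\le (\max_i d_i)\sum_i w_i=(\max_i d_i)\bigl(h-(-\Delta)^{\alpha}v\bigr)$ to obtain the \emph{constant-coefficient} inequality
\[
\partial_t v+(\max_i d_i)\,(-\Delta)^{\alpha}v\le (\max_i d_i)\,h\le \tilde M.
\]
Now $v\ge 0$ is a subsolution of a standard fractional heat equation with bounded source and zero initial datum; testing against $(v-\tilde M(t-\tau))^{+}$ (one first checks $(v-V)^{+}\le v\in L^{2}$ from \eqref{est:v_L1norm} and the bound $\|(-\Delta)^{\alpha/2}v\|_{L^{2}}^{2}\le \int v\,h$) closes immediately by Gronwall with no $\partial_t b$ term, giving $v\le \tilde M(T-\tau)$.
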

	\begin{proof}
		From \eqref{h1} and the fact that $\int_{\R^N}(-\Delta)^\alpha w_i dx = 0$ for all $1 \leq i \leq m$, we have
		$$
		\sum_{i=1}^m\int_{\R^N} w_i(x,t)dx = \sum_{i=1}^m \int_{\tau}^t \int_{\R^N} \psi_i (x,s)dx ds + \sum_{i=1}^m\int_{\R^N}w_{i,\tau}dx.	
		$$
		Therefore
		\begin{equation} \label{est:v_L1norm}
		\begin{aligned} 
			\|v(\cdot,t)\|_{\LO{1}} 
			&=  \sum_{i=1}^m d_i \int_{\tau}^t \int_{\tau}^s \int_{\R^N} \psi_i (x,\sigma)dx d\sigma ds + (t-\tau)\sum_{i=1}^m d_i \int_{\R^N}w_{i,\tau}dx \\
			&\leq (T-\tau)\sum_{i=1}^m d_i(\| \psi_i\|_{L^1(Q_{\tau,T})} + \|w_{i,\tau}\|_{L^1(\R^N)}) \leq C_1,
		\end{aligned}
		\end{equation}
		where $C_1$ depends on $T-\tau$, $\Theta_1$, $\| w_{i,\tau}\|_{L^1(\R^N)}$, $d_i$, $i=1,\ldots,m$. 
		It follows from \eqref{h2} and the non-negativity of $w_i$ that
		\begin{equation*} 
			(-\Delta)^{\alpha}v(x,t) \leq \sum_{i=1}^{m}\int_{\tau}^t \psi_i(x,s)ds + \sum_{i=1}^m w_{i,\tau}(x), \quad (x,t)\in Q_{\tau,T}.
		\end{equation*}
		By interpolation,
			\begin{equation*}
				\sum_{i=1}^m \Big\|\int_{\tau}^T\psi_i(x,s)ds\Big\|_{\LO{\frac{N}{\alpha}}} \leq m\Theta_1^{\frac{\alpha}{N}}\Theta_2^{\frac{N-\alpha}{N}} =: \Theta_3.
			\end{equation*}
		For $\ell > 1$, by multiplying this inequality by $v^{\ell-1}$ and using the Stroock-Varopoulos inequality in Lemma \ref{SV-ineq} and Young's inequality, we have
		\begin{align*} 
			&\frac{4(\ell-1)}{\ell^2}\norm{(-\Delta)^{\frac{\alpha}{2}}(v(t)^{\frac{\ell}{2}})}_{\LO{2}}^2 \\
			&\leq \int_{\R^N} v(x,t)^{\ell-1} \sum_{i=1}^{m} \int_{\tau}^t \psi_i(x,s)dsdx +  \intR v(x,t)^{\ell-1} \sum_{i=1}^{m} w_{i,\tau}(x)dx \\
			&  {\le\sum_{i=1}^m\|v\|_{\LO{\frac{N(\ell-1)}{N-\alpha}}}^{\ell-1}\Big\|\int_{\tau}^t\psi_i(x,s)ds\Big\|_{\LO{\frac{N}{\alpha}}} + \sum_{i=1}^m\|v\|_{\LO{\frac{N(\ell-1)}{N-\alpha}}}^{\ell-1}\|w_{i,\tau}\|_{\LO{\frac{N}{\alpha}}} } \\
			&{\le \frac{\ell-1}{\ell}\|v\|_{\LO{\frac{N(\ell-1)}{N-\alpha}}}^{\ell}\times\bra{\sum_{i=1}^m\bra{\Big\|\int_{\tau}^t\psi_i(x,s)ds\Big\|_{\LO{\frac{N}{\alpha}}} + \|w_{i,\tau}\|_{\LO{\frac{N}{\alpha}}} }} } \\
			&{\quad + \frac{1}{\ell}\sum_{i=1}^m\bra{\Big\|\int_{\tau}^t\psi_i(x,s)ds\Big\|_{\LO{\frac{N}{\alpha}}} + \|w_{i,\tau}\|_{\LO{\frac{N}{\alpha}}} }}\\
			&{\le \frac{\ell-1}{\ell}(C_2 + \Theta_3)\|v\|_{\LO{\frac{N(\ell-1)}{N-\alpha}}}^{\ell} + \frac{1}{\ell}(C_2+\Theta_3)},
		\end{align*}
		where $C_2=\sum_{i=1}^m \| w_{i,\tau}\|_{L^{\frac{N}{\alpha}}(\R^N)}$, which is independent of  $\ell$. Applying the Sobolev inequality in Lemma \ref{fGN-ineq} yields
		\begin{align*} 
			& \|v(t)\|_{\LO{\frac{N\ell}{N-2\alpha}}}^{\ell}= \norm{v(t)^{\frac{\ell}{2}}}_{\LO{\frac{2N}{N-2\alpha}}}^2 \\ &\leq C_{\text{Sob}}^{2}\|(-\Delta)^{\frac{\alpha}{2}}(v(t)^{\frac{\ell}{2}}) \|_{L^2(\R^N)}^2 \leq  \frac{C_{\text{Sob}}^{2}{(C_2+\Theta_3)}\ell}{4}\|v(t)\|_{{\LO{\frac{N(\ell-1)}{N-\alpha}}}}^{\ell} + \frac{C_{\text{Sob}}^{2}{(C_2+\Theta_3)}\ell}{4(\ell-1)}.
		\end{align*}
		We now define a sequence $\{\varrho_k\}_{k\ge 1}$ such that {$\varrho_1 = 1$} and for all $k\ge 1$,
			\begin{equation} \label{sequence_varrho}
				\varrho_{k+1} = 1 + \frac{N-\alpha}{N-2\alpha}\varrho_k.
			\end{equation}
			Denote $\omega_{k} = \frac{N\varrho_{k}}{N-2\alpha}$. By choosing $\ell = \varrho_{k+1}$ and noting that $\omega_k = \frac{N(\varrho_{k+1}-1)}{N-\alpha}$, we get
			\begin{equation*}
				\|v(t)\|_{\LO{\omega_{k+1}}} \le \bra{C_3(C_2+\Theta_3)\varrho_{k+1}}^{\frac{1}{\varrho_{k+1}}}\bra{\|v(t)\|_{\LO{\omega_k}}^{\varrho_{k+1}} +  \frac{1}{\varrho_{k+1}-1} }^{\frac{1}{\varrho_{k+1}}}
			\end{equation*}
			with $C_3 = C_{\text{Sob}}^2/4$.
		Put 
		$$ M_k(t):=\max\left\{ \|v(t)\|_{{L^{\omega_{k}}(\R^N)}}, {\Big(\frac{1}{\varrho_{k+1}-1}\Big)^{\frac{1}{\varrho_{k+1}}}} \right\}, 
		$$
		then we see that
		$$ \|v(t)\|_{\LO{\omega_{k+1}}} \leq {(C_3(C_2+\Theta_3)\varrho_{k+1})^{\frac{1}{\varrho_{k+1}}}} M_k(t).
		$$
		On the other hand, we have
		\begin{align*}
			\left(\frac{1}{\varrho_{k+2}-1} \right)^{\frac{1}{\varrho_{k+2}}}	\leq \Lambda_k \left( \frac{C_3(C_2+\Theta_3) \varrho_{k+1}}{\varrho_{k+1}-1} \right)^{\frac{1}{\varrho_{k+1}}} \leq \Lambda_k (C_3(C_2+\Theta_3)\varrho_{k+1})^{\frac{1}{\varrho_{k+1}}} M_k(t),
		\end{align*}
		where
		$$ \Lambda_k := \left(\frac{1}{\varrho_{k+2}-1} \right)^{\frac{1}{\varrho_{k+2}}} \left( \frac{\varrho_{k+1}-1}{C_3(C_2+\Theta_3)\varrho_{k+1}} \right)^{\frac{1}{\varrho_{k+1}}}.
		$$
		By choosing  {$C_3$ large enough} if necessary, we have
		\begin{align*}
			\Lambda_k \leq 	\left(\frac{1}{C_3(C_2+\Theta_3)(\varrho_{k+2}-1)} \right)^{\frac{1}{\varrho_{k+2}}} \leq \left(\frac{1}{\varrho_{k+2}} \right)^{\frac{1}{\varrho_{k+2}}} \leq 1, \quad \forall k \in \N.
		\end{align*}
		Combining the preceding estimates  leads to
		$M_{k+1}(t)	\leq	{(2C_3(C_2+\Theta_3)\varrho_{k+1})^{\frac{1}{\varrho_{k+1}}}} M_k(t)$. 
		This yields
		\begin{equation*} 
			\ln M_{k+1}(t) \leq  {\ln(2C_3(C_2+\Theta_3))\frac{1}{\varrho_{k+1}}} + {\frac{\ln \varrho_{k+1}}{\varrho_{k+1}}} + \ln M_k(t), \quad \forall k \geq 1,
		\end{equation*}
		which in turn implies
		$$
		{\ln M_{k+1}(t)} \leq {\ln(2C_3(C_2+\Theta_3)) \sum_{k \geq 1} \frac{1}{\varrho_{k+1}} + \sum_{k \geq 1} \frac{\ln \varrho_{k+1}}{\varrho_{k+1}}} + \ln M_1(t), \quad \forall k \geq 1.
		$$
		From the definition of sequence $\{\varrho_k\}$, the series on the right hand side are convergent. Hence
		$$ \| v(t) \|_{\LO{\omega_{k+1}}} \leq C_4(C_2+ \Theta_3)^{a}\bra{\| v(t) \|_{L^{\frac{N}{N-2\alpha}}(\R^N)}+1}, \quad \forall k \geq 1,
		$$
		where $a=\sum_{k \geq 1}\frac{1}{\varrho_{k+1}}$ (by \eqref{sequence_varrho}, $\frac{2N-3\alpha}{N-\alpha}<a<\frac{N-2\alpha}{\alpha}$) and $C_4$ is independent of $\Theta_3$ and $k$. Letting $k\to \infty$ yields
		\begin{equation}\label{c1}
			\|v(t)\|_{\LO{\infty}}\leq C_4(C_2+ \Theta_3)^{a}\bra{\| v \|_{L^{\frac{N}{N-2\alpha}}(\R^N)}+1}.
		\end{equation}
		By using Lebesgue interpolation inequalities and $\LO{1}$-bound of $v$ in \eqref{est:v_L1norm}, we obtain  
		$$
		\|v(t)\|_{\LO{\frac{N}{N-2\alpha}}} \leq \|v(t)\|_{\LO{1}}^{\frac{N-2\alpha}{N}} \|v(t)\|_{\LO{\infty}}^{\frac{2\alpha}{N}}\leq C_1^{\frac{N-2\alpha}{N}} \|v(t)\|_{\LO{\infty}}^{\frac{2\alpha}{N}}.
		$$  
		Inserting it into \eqref{c1}, we derive
		\begin{equation*}
			\|v(t)\|_{\LO{\infty}}\leq C_4(C_2+ \Theta_3)^{a} \bra{C_1^{\frac{N-2\alpha}{N}} \|v(t)\|_{\LO{\infty}}^{\frac{2\alpha}{N}}+1}.
		\end{equation*}
		This and Young's inequality imply \eqref{bound-1}. The proof is complete.
	\end{proof}	
	
	
	\begin{lemma}\label{lem:Holder_v}
		The function $v$ defined in \eqref{def_v} is H\"older continuous, namely there exists a constant $\widetilde\gamma\in (0,1)$ independent of $v$ such that
		\begin{equation*} 
			|v(x,t) - v(x_1,t_1)| \lesssim |x-x_1|^{\widetilde\gamma} + |t-t_1|^{\widetilde\gamma/2} \quad \forall (x,t), (x_1,t_1)\in Q_{\tau,T}.
		\end{equation*}
	\end{lemma}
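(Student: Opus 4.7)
The plan is to apply a De Giorgi iteration scheme for non-local parabolic equations directly to the equation \eqref{h4} satisfied by $v$. Although the naive observation that $(-\Delta)^\alpha v = G - \sum_i w_i \in L^\infty$ (with $G$ the explicit right-hand side of \eqref{h4}) would already yield some fractional Hölder regularity via potential theory, such an argument produces constants depending on $\|\bm{w}\|_{L^\infty(Q_{\tau,T})}$, which is precisely the quantity we aim to bound in the feedback estimate \eqref{est:feedback}. Working with \eqref{h4} instead yields a Hölder bound whose constants depend only on the data allowed in Proposition \ref{prop:feedback}.

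First I would record the ingredients that make the non-local parabolic regularity theory applicable here. The multiplier $b(x,t)$ of $\partial_t v$ is bounded by positive constants thanks to \eqref{bound_b}, namely $b_0 := 1/\max_i d_i \le b \le 1/\min_i d_i =: b_1$. The right-hand side
$$ G(x,t) := \sum_{i=1}^m \int_\tau^t \psi_i(x,s)\,ds + \sum_{i=1}^m w_{i,\tau}(x) $$
lies in $L^\infty(Q_{\tau,T})$, with norm controlled by $\Theta_2$ and $\|w_{i,\tau}\|_{L^\infty(\R^N)}$ via \eqref{est:LxinftyLt1psi}. Lemma \ref{lem:boundedness_v} already provides $\|v\|_{L^\infty(Q_{\tau,T})} \le C$ in terms of the same parameters.

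Then I would run a De Giorgi iteration in the spirit of \cite{caffarelli2011regularity}, as refined in \cite{vasseur2016giorgi}, on parabolic cylinders $Q_r(x_0,t_0) := B_r(x_0) \times (t_0 - r^{2\alpha}, t_0)$ adapted to the fractional scaling. The three essential ingredients are (a) a Caccioppoli-type energy estimate for the level-set truncations $(v-k)_\pm$, obtained by testing \eqref{h4} against a smooth cut-off of $(v-k)_\pm$ and applying the Stroock--Varopoulos inequality (Lemma \ref{SV-ineq}); (b) the first De Giorgi lemma, which converts smallness of $|\{v \ge k\} \cap Q_r|$ into a pointwise improvement on $Q_{r/2}$; and (c) the oscillation decay $\operatorname{osc}_{Q_{r/2}} v \le \theta \operatorname{osc}_{Q_r} v + C r^\eta$ for some $\theta \in (0,1)$ and $\eta>0$. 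Dyadic iteration then produces the parabolic Hölder estimate $|v(x,t)-v(x_1,t_1)| \lesssim |x-x_1|^{\tilde\gamma_0} + |t-t_1|^{\tilde\gamma_0/(2\alpha)}$ with $\tilde\gamma_0 \in (0,1)$ depending only on $N, \alpha, d_i, \|G\|_{L^\infty}, \|v\|_{L^\infty}$; decreasing the exponent gives the form stated in the lemma with some $\tilde\gamma \in (0, 1)$.

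The main obstacle is controlling the non-local tails in step (a): because the equation lives on $\R^N$ and $(-\Delta)^\alpha$ is non-local, truncating $(v-k)_\pm$ on a cylinder produces contributions from the far field of $v$ that must be absorbed using the global $L^\infty$ bound on $v$. The multiplier $b \asymp 1$ is the mildest possible complication; thanks to the two-sided bound it only enters the constants through $b_0, b_1$, leaving the parabolic scaling of the iteration unchanged. Once the energy and tail control are in place, the remaining steps (b)--(c) and their dyadic iteration follow the established template for De Giorgi/Caffarelli--Chan--Vasseur theory in the fractional parabolic setting.
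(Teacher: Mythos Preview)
Your overall plan---adapt the De Giorgi scheme of \cite{caffarelli2011regularity,vasseur2016giorgi} to equation \eqref{h4}, using $G\in L^\infty$ and the global bound from Lemma \ref{lem:boundedness_v} to control the non-local tails, then run the first/second De Giorgi lemmata and an oscillation iteration---is the same route the paper takes in Section \ref{appendix:Holder_v}. But there is a genuine gap in your handling of the multiplier $b(x,t)$.

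You assert that ``the multiplier $b\asymp 1$ is the mildest possible complication; thanks to the two-sided bound it only enters the constants through $b_0,b_1$.'' This is not justified as stated. Equation \eqref{h4} is in \emph{non-divergence} form: we have no regularity on $b$ beyond the pointwise bounds \eqref{bound_b}, and in particular no control on $\partial_t b$. When you test \eqref{h4} with $(v-k)_+$ the time term is $\int_{\R^N} b\,(\partial_t v)\,(v-k)_+\,dx$, and in general this cannot be bounded below by $c\,\frac{d}{dt}\int (v-k)_+^2\,dx$: the pointwise product $\partial_t v\,(v-k)_+$ may change sign, so neither $b\ge\underline{b}$ nor $b\le\overline{b}$ applies uniformly. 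The paper itself flags this (see the remark after the statement of Lemma \ref{lem:Holder_v}): the results of \cite{caffarelli2010drift,caffarelli2011regularity,ding2021local} are \emph{not} directly applicable precisely because of this non-divergence structure.

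The missing idea is that here $\partial_t v=\sum_i d_i w_i\ge 0$ pointwise. With this sign in hand, $\partial_t v\,(v-\psi_L)_+\ge 0$ everywhere, so the single bound $b\ge\underline{b}$ gives
\[
\int_{\R^N} b\,(\partial_t v)\,(v-\psi_L)_+\,dx \;\ge\; \frac{\underline{b}}{2}\,\frac{d}{dt}\int_{\R^N}(v-\psi_L)_+^2\,dx,
\]
and the Caccioppoli/energy estimate \eqref{a2} goes through. (For the oscillation step one also looks at $-v$, where $\partial_t(-v)\le 0$ and the upper bound $b\le\overline{b}$ plays the analogous role.) This is exactly Step 1 of the paper's argument; once it is in place, your outline of Steps (b)--(c) coincides with the paper's Steps 2--4. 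Incidentally, the relevant inequality for the truncations is the bilinear/C\'ordoba--C\'ordoba type estimate $B[v,(v-\psi_L)_+]\ge B[(v-\psi_L)_+,(v-\psi_L)_+]+\text{(good cross term)}$ used in \cite{caffarelli2011regularity}, rather than Stroock--Varopoulos as you wrote.
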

	\begin{proof}
		To prove Lemma \ref{lem:Holder_v} we exploit the famous De Giorgi-Nash-Moser theory. Note that the equation of $v$ in \eqref{h4} is of non-divergence form, as we do not have any other regularity of $b$ except the upper and lower bounds \eqref{bound_b}. Therefore, up to our knowledge, H\"older regularity for fractional Laplacian or integral equations, as in e.g. \cite{caffarelli2010drift,caffarelli2011regularity,ding2021local}, is not directly applicable in our case. However, we can adapt the techniques in \cite{caffarelli2011regularity} with some suitable modifications to get our desired result. {To avoid interrupting the train of thought, the proof with key steps is deferred to Section \ref{appendix:Holder_v}}.
	\end{proof}
	\begin{lemma}\label{lem:gradient_v}
		There exists some $\gamma\in (0,\alpha)$ such that 
		\begin{equation} \label{est:Delta_alpha/2}
			\|(-\Delta)^{\frac\alpha 2}v\|_{L^\infty(Q_{\tau,T})} \leq C\bra{1 + \sum_{i=1}^{m}\|w_i\|_{L^\infty(Q_{\tau,T})}^{\frac{\alpha - \gamma}{2\alpha - \gamma}} + \|\psi_i\|_{L^\infty(Q_{\tau,T})}^{\frac{\alpha - \gamma}{2\alpha - \gamma}}},
		\end{equation}
		where $C$ depends only on $N,m,\alpha$, $\gamma$, $T-\tau$, $\Theta_1,\Theta_2$, $d_i$, $\| w_{i,\tau} \|_{(L^1 \cap L^\infty)(\R^N)}$, $i=1,\ldots,m$.
	\end{lemma}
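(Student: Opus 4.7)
\textbf{Proof plan for Lemma \ref{lem:gradient_v}.}

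The strategy is to apply the interpolation estimate of Theorem~\ref{th:regular-1} directly to $v$, using equation~\eqref{h3} (with its zero initial condition at $t=\tau$) together with the spatial H\"older regularity already established in Lemma~\ref{lem:Holder_v}. First, I rewrite \eqref{h3} as
\begin{equation*}
\partial_t v + (-\Delta)^{\alpha} v = F \quad \text{in } Q_{\tau,T}, \qquad v(\cdot,\tau)=0,
\end{equation*}
with
\begin{equation*}
F(x,t) := \sum_{i=1}^{m}(d_i-1)w_i(x,t) + \sum_{i=1}^{m}\int_{\tau}^{t}\psi_i(x,s)\,ds + \sum_{i=1}^{m} w_{i,\tau}(x),
\end{equation*}
so that $v$ is a mild solution of \eqref{prob:QtauT} with $\mu=1$, $u_\tau=0$, and right-hand side $F\in L^\infty(Q_{\tau,T})$.

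Second, Lemma~\ref{lem:Holder_v} provides some $\widetilde\gamma\in(0,1)$ and a constant (depending only on the parameters listed in the statement) such that $|v(x,t)-v(y,t)|\lesssim |x-y|^{\widetilde\gamma}$ for all $x,y\in\R^N$, $t\in(\tau,T)$. I pick any $\gamma \in (0,\min\{\widetilde\gamma,\alpha\})$. On $|x-y|\le 1$ the bound gives $|v(x,t)-v(y,t)|\lesssim |x-y|^{\gamma}$ directly; on $|x-y|\ge 1$ I combine it with the $L^\infty$-bound from Lemma~\ref{lem:boundedness_v} to get $|v(x,t)-v(y,t)|\le 2\|v\|_{L^\infty(Q_{\tau,T})}|x-y|^\gamma$. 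Thus the spatial H\"older assumption \eqref{assump:Holder} holds with exponent $\gamma\in[0,\alpha)$ and constant $H\lesssim 1$ depending only on the permitted data.

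Third, I apply Theorem~\ref{th:regular-1} with $\mu=1$, $u_\tau=0$ and $p=\infty$, which is admissible since $N/\alpha<\infty$ and $F\in L^\infty(Q_{\tau,T})$. Passing to $p=\infty$ in \eqref{est:interpolation-1}, the exponents collapse to $\frac{\alpha-\gamma}{2\alpha-\gamma}$ on $\|F\|_{L^\infty}$ and $\frac{\alpha}{2\alpha-\gamma}$ on $H$, and the $C^\theta$-initial term vanishes, giving
\begin{equation*}
\|(-\Delta)^{\alpha/2} v\|_{L^\infty(Q_{\tau,T})} \;\lesssim\; \|F\|_{L^\infty(Q_{\tau,T})}^{\frac{\alpha-\gamma}{2\alpha-\gamma}}\, H^{\frac{\alpha}{2\alpha-\gamma}} \;\lesssim\; \|F\|_{L^\infty(Q_{\tau,T})}^{\frac{\alpha-\gamma}{2\alpha-\gamma}}.
\end{equation*}
Finally I bound $\|F\|_{L^\infty(Q_{\tau,T})} \lesssim \sum_i\|w_i\|_{L^\infty(Q_{\tau,T})} + (T-\tau)\sum_i\|\psi_i\|_{L^\infty(Q_{\tau,T})} + \sum_i\|w_{i,\tau}\|_{L^\infty(\R^N)}$ and use the subadditivity $(a+b)^{\theta}\le a^\theta + b^\theta$ for $\theta=\frac{\alpha-\gamma}{2\alpha-\gamma}\in(0,1)$ to distribute the power across the sum, absorbing the $\|w_{i,\tau}\|_{L^\infty}$ contribution into the additive constant. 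This yields the desired estimate \eqref{est:Delta_alpha/2}.

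The main (essentially only) subtlety is making sure the limiting case $p=\infty$ in Theorem~\ref{th:regular-1} is clean; since the theorem explicitly allows $p\le\infty$, no additional argument is required. All the quantitative work was already absorbed into Lemmas~\ref{lem:boundedness_v}--\ref{lem:Holder_v} and Theorem~\ref{th:regular-1}, so the present lemma is essentially an assembly of those ingredients.
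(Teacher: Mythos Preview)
Your proof is correct and follows essentially the same approach as the paper: reduce the H\"older exponent from $\widetilde\gamma$ to some $\gamma\in(0,\min\{\widetilde\gamma,\alpha\})$ using the $L^\infty$-bound of Lemma~\ref{lem:boundedness_v}, then apply Theorem~\ref{th:regular-1} with $p=\infty$ and $u_\tau=0$ to equation~\eqref{h3}. The paper's version is terser (it simply cites the embedding $\dot C^{\gamma_1}\cap L^\infty\hookrightarrow \dot C^{\gamma_2}$ rather than splitting $|x-y|\lessgtr 1$), but the content is identical.
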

	\begin{proof}
		From the boundedness in Lemma \ref{lem:boundedness_v}, the H\"older continuity in Lemma \ref{lem:Holder_v}, and the continuous embedding $\dot{C}^{\gamma_1}(\R^N)\cap \LO{\infty}\hookrightarrow \dot{C}^{\gamma_2}(\R^N)$ for $\gamma_1>\gamma_2>0$, we have, for any $\gamma\in (0,\widetilde{\gamma})$, that $v\in C_{x,t}^{\gamma,\gamma/2}(Q_{\tau,T})$. We now choose $\gamma\in (0,\min\{\alpha,\widetilde{\gamma}\})$, and then apply Theorem \ref{th:regular-1} with $p=\infty$ to equation \eqref{h3} (note that $v(\cdot,\tau)=0$ in $\R^N$) to obtain the desired estimate \eqref{est:Delta_alpha/2}.
	\end{proof}
	\begin{lemma}\label{bound_Delta_v}
		We have
		\begin{align*} 
			\|(-\Delta)^{\alpha}v\|_{L^\infty(Q_{\tau,T})} 
			&\leq C \left(1 +\sum_{i=1}^m(\|w_i\|_{L^\infty(Q_{\tau,T})}^{\frac{\alpha - \gamma}{2\alpha - \gamma}} + \|\psi_i\|_{L^\infty(Q_{\tau,T})}^{\frac{\alpha - \gamma}{2\alpha - \gamma}}  )\right)^{\frac{1}{2}}\\
			&\qquad \times\left(1+\sum_{i=1}^m(\|(-\Delta)^{\frac \alpha 2}w_i\|_{L^\infty(Q_{\tau,T})} + \| (-\Delta)^{\frac{\alpha}{2}}\psi_i\|_{L^\infty(Q_{\tau,T})})\right)^{\frac{1}{2}},
		\end{align*}
		where $C$ depends only on $N,m,\alpha$, $\theta$, $T-\tau$, $\Theta_1,\Theta_2$, $d_i$, $\| w_{i,\tau} \|_{(L^1 \cap L^\infty)(\R^N)}$, $\| (-\Delta)^{\frac{\alpha}{2}}w_{i,\tau} \|_{L^\infty(\R^N)}$.
	\end{lemma}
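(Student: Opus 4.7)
The plan is to apply the interpolation estimate \eqref{est:1} from Theorem \ref{th:inter-2}(i) to equation \eqref{h3}, for which $v(\cdot,\tau) = 0$ holds by construction and the Hölder continuity hypothesis \eqref{assump:Holder} on $v$ is exactly what Lemma \ref{lem:Holder_v} provides (with $\gamma \in (0,\min\{\alpha, \widetilde{\gamma}\})$ chosen as in Lemma \ref{lem:gradient_v}). Denoting the right-hand side of \eqref{h3} by
$$ g(x,t) := \sum_{i=1}^{m}(d_i-1)w_i(x,t) + \sum_{i=1}^{m}\int_{\tau}^t \psi_i(x,s)\,ds + \sum_{i=1}^{m} w_{i,\tau}(x), $$
estimate \eqref{est:1} then yields
$$ \|(-\Delta)^\alpha v\|_{L^\infty(Q_{\tau,T})} \lesssim \|(-\Delta)^{\frac{\alpha}{2}} v\|_{L^\infty(Q_{\tau,T})}^{\frac{1}{2}} \, \|(-\Delta)^{\frac{\alpha}{2}} g\|_{L^\infty(Q_{\tau,T})}^{\frac{1}{2}}. $$

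Next I would bound the two factors separately. The first factor is controlled by Lemma \ref{lem:gradient_v}, which produces precisely the first parenthesis in the target inequality, with the exponent $\tfrac{\alpha-\gamma}{2\alpha-\gamma}$ on the $L^\infty$ norms of $w_i$ and $\psi_i$. For the second factor, I apply $(-\Delta)^{\frac{\alpha}{2}}$ termwise to $g$; the commutation with the time integral in the middle term is justified by the technical lemmata in the Appendix (see Lemmata \ref{lem:interchange-2} and \ref{lem:interchange-3}). Taking $L^\infty$ norms and using $T-\tau<\infty$ together with the hypothesis $\psi_i \in L^\infty((\tau,T);C^\theta(\R^N))$ gives
$$ \|(-\Delta)^{\frac{\alpha}{2}} g\|_{L^\infty(Q_{\tau,T})} \lesssim 1 + \sum_{i=1}^m \Big( \|(-\Delta)^{\frac{\alpha}{2}} w_i\|_{L^\infty(Q_{\tau,T})} + \|(-\Delta)^{\frac{\alpha}{2}} \psi_i\|_{L^\infty(Q_{\tau,T})} \Big), $$
where the contribution $\sum_i\|(-\Delta)^{\frac{\alpha}{2}} w_{i,\tau}\|_{L^\infty(\R^N)}$ is absorbed in the constant, in agreement with the stated dependence of $C$. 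Multiplying the two bounds delivers the claimed estimate.

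The main obstacle is verifying the regularity hypothesis $g \in L^\infty((\tau,T); C^\theta(\R^N))$ for some $\theta \in (\alpha, 1)$ required by Theorem \ref{th:inter-2}(i). The $\psi_i$ terms lie in this space by assumption, and $w_i$ satisfies the desired Hölder regularity away from $t=\tau$ by Lemma \ref{lem:reg-sys} applied to the auxiliary system \eqref{sys:psi_tauT} (noting that the forcing $\psi_i + F_i$ is bounded). The extension up to $t=\tau$, as well as the required regularity of the initial-data contribution $\sum_i w_{i,\tau}$, is handled by the implicit smoothness assumption carried by the dependence of the constant on $\|(-\Delta)^{\frac{\alpha}{2}} w_{i,\tau}\|_{L^\infty(\R^N)}$, with a standard approximation argument covering any residual boundary issues at $t=\tau$. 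The remaining work, namely the pointwise interchange of $(-\Delta)^{\frac{\alpha}{2}}$ with the time integral and with the sum defining $g$, is routine via the appendix machinery.
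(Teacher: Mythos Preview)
Your proposal is correct and follows essentially the same route as the paper's own proof: apply estimate \eqref{est:1} of Theorem \ref{th:inter-2}(i) to equation \eqref{h3}, control the factor $\|(-\Delta)^{\alpha/2}v\|_{L^\infty}$ via Lemma \ref{lem:gradient_v}, and bound $(-\Delta)^{\alpha/2}$ of the right-hand side termwise, absorbing the $w_{i,\tau}$ contribution into the constant. The only cosmetic difference is that the paper handles the $C^\theta$ regularity of $g$ by simply declaring that one may assume $w_{i,\tau}\in C^\theta(\R^N)$ (``otherwise increase $\tau$'') and then observing $w_i\in L^\infty((\tau,T);C^\theta(\R^N))$, which is the same mechanism you describe; note also that the commutation of $(-\Delta)^{\alpha/2}$ with the plain time integral $\int_\tau^t\psi_i(\cdot,s)\,ds$ is more elementary than Lemmata \ref{lem:interchange-2}--\ref{lem:interchange-3} (which concern the semigroup), and the paper does not even justify it explicitly.
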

	\begin{proof}
		Fix $\tau \in (0,T)$ small. We can assume that $w_{i,\tau}$ is smooth enough  (otherwise we can increase $\tau$) so that $w_{i,\tau} \in C^\theta(\R^N)$ for any $i=1,\ldots,m$, where $\theta \in (\alpha,1)$ is in (iii) of Proposition \ref{prop:feedback}. We infer from the assumption on $\psi_i$ that $\psi_i \in L^\infty((\tau,T);C^\theta(\R^N))$. Consequently, we deduce that $w_i \in L^\infty((\tau,T);C^\theta(\R^N))$. By applying estimate \eqref{est:1} to \eqref{h3} and using \eqref{est:Delta_alpha/2}, we have
		\begin{align*} 
			&\|(-\Delta)^\alpha v\|_{L^\infty(Q_{\tau,T})} \\
			&\leq C\|(-\Delta)^{\frac\alpha 2}v\|_{L^\infty(Q_{\tau,T})}^{\frac{1}{2}}\left\| \sum_{i=1}^{m}(-\Delta)^{\frac{\alpha}{2}}(w_{i,\tau} + w_i + \int_{\tau}^t \psi_i(x,s)ds)\right\|_{L^\infty(Q_{\tau,T})}^{\frac{1}{2}}\\
			&\leq C \left(\sum_{i=1}^{m}\|w_{i,\tau}\|_{\LO{\infty}}^{\frac{\alpha - \gamma}{2\alpha - \gamma}} +  \sum_{i=1}^{m}\Big(\|w_i\|_{L^\infty(Q_{\tau,T})}^{\frac{\alpha - \gamma}{2\alpha - \gamma}} + \|\psi_i\|_{L^\infty(Q_{\tau,T})}^{\frac{\alpha - \gamma}{2\alpha - \gamma}}\Big) \right)^{\frac{1}{2}} \\
			&\times \left( \sum_{i=1}^N (\| (-\Delta)^{\frac{\alpha}{2}}w_{i,\tau} \|_{L^\infty(\R^N)} + \| (-\Delta)^{\frac{\alpha}{2}} w_i \|_{L^\infty(Q_{\tau,T})} + (T-\tau)\| (-\Delta)^{\frac{\alpha}{2}}\psi_i\|_{L^\infty(Q_{\tau,T})}   ) \right)^{\frac{1}{2}},
		\end{align*}
		which implies the desired estimate.
	\end{proof}
	\begin{lemma}\label{lem:gradient_ui}
		For each $i=1,\ldots, m$, it holds
		\begin{equation*} 
			\|(-\Delta)^{\frac \alpha 2}w_i\|_{L^\infty(Q_{\tau,T})}\leq C + C\|w_i\|_{L^\infty(Q_{\tau,T})}^{\frac{1}{2}}\Big(\|\psi_i\|_{L^\infty(Q_{\tau,T})}^{\frac{1}{2}} +\|F_i\|_{L^\infty(Q_{\tau,T})}^{\frac{1}{2}}\Big),
		\end{equation*}
		where $C$ depends only on $N,m,\alpha$, $\theta$, $T-\tau$, $\Theta_1,\Theta_2$, $d_i$, $\| w_{i,\tau} \|_{C^\theta(\R^N)}$, $i=1,\ldots,m$.
	\end{lemma}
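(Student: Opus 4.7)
The plan is to mimic the shift-and-optimize trick used in the proof of Theorem \ref{th:regular-1}, but now applied directly to the single equation for $w_i$ rather than to $v$. Fix $k>0$ to be chosen later and rewrite the equation of $w_i$ in \eqref{sys:psi_tauT} as
\begin{equation*}
  \partial_t w_i + d_i(-\Delta)^\alpha w_i + k w_i = \psi_i + F_i + k w_i,
\end{equation*}
so that Duhamel's formula gives
\begin{equation*}
  w_i(x,t) = e^{-k(t-\tau)} S_{i,\alpha}(t-\tau)w_{i,\tau}(x) + \int_\tau^t e^{-k(t-s)} S_{i,\alpha}(t-s)\bigl(\psi_i + F_i + k w_i\bigr)(x,s)\,ds.
\end{equation*}

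The next step is to apply $(-\Delta)^{\alpha/2}$ to both sides and estimate in $L^\infty(\R^N)$. For the homogeneous term, Lemma \ref{lem:fracDeltaSphi-Holder} applied with $\beta=\alpha/2$ and the assumption $w_{i,\tau}\in C^\theta(\R^N)$ with $\theta\in(\alpha,1)$ yields $\|(-\Delta)^{\alpha/2}S_{i,\alpha}(t-\tau)w_{i,\tau}\|_{L^\infty}\lesssim \|w_{i,\tau}\|_{C^\theta}$ uniformly in $t$. For the integral term, I interchange $(-\Delta)^{\alpha/2}$ with the time integral (justified by Lemma \ref{lem:interchange-3} using that the integrand already belongs to $L^\infty((\tau,T);C^\theta(\R^N))$, as established in the proof of Lemma \ref{bound_Delta_v}) and invoke the smoothing estimate \eqref{est:Lp-DS} with $\beta=\alpha/2$, $r=p=\infty$, giving the pointwise bound
\begin{equation*}
  \bigl\|(-\Delta)^{\alpha/2}\bigl[S_{i,\alpha}(t-s)(\psi_i+F_i+kw_i)(s)\bigr]\bigr\|_{L^\infty(\R^N)} \lesssim (t-s)^{-\tfrac{1}{2}}\bigl(\|\psi_i\|_{L^\infty(Q_{\tau,T})}+\|F_i\|_{L^\infty(Q_{\tau,T})}+k\|w_i\|_{L^\infty(Q_{\tau,T})}\bigr).
\end{equation*}
Combining with the factor $e^{-k(t-s)}$ and using $\int_\tau^t e^{-k(t-s)}(t-s)^{-1/2}\,ds\le C k^{-1/2}$ yields
\begin{equation*}
  \|(-\Delta)^{\alpha/2}w_i(\cdot,t)\|_{L^\infty(\R^N)} \le C\|w_{i,\tau}\|_{C^\theta(\R^N)} + C k^{-\tfrac{1}{2}}\bigl(\|\psi_i\|_{L^\infty(Q_{\tau,T})}+\|F_i\|_{L^\infty(Q_{\tau,T})}\bigr) + C k^{\tfrac{1}{2}}\|w_i\|_{L^\infty(Q_{\tau,T})}.
\end{equation*}

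Finally, I optimize over $k>0$: the choice $k=(\|\psi_i\|_{L^\infty(Q_{\tau,T})}+\|F_i\|_{L^\infty(Q_{\tau,T})})/\|w_i\|_{L^\infty(Q_{\tau,T})}$ balances the last two terms and produces
\begin{equation*}
  \|(-\Delta)^{\alpha/2}w_i\|_{L^\infty(Q_{\tau,T})} \le C + C\|w_i\|_{L^\infty(Q_{\tau,T})}^{\tfrac{1}{2}}\bigl(\|\psi_i\|_{L^\infty(Q_{\tau,T})}+\|F_i\|_{L^\infty(Q_{\tau,T})}\bigr)^{\tfrac{1}{2}},
\end{equation*}
and the conclusion follows from $(a+b)^{1/2}\le a^{1/2}+b^{1/2}$ for $a,b\ge 0$. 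I expect the only subtle point to be the justification of pushing $(-\Delta)^{\alpha/2}$ inside the Duhamel integral in $L^\infty$; this requires the H\"older regularity of $\psi_i$ in $x$ (given in (iii) of Proposition \ref{prop:feedback}), propagated to $w_i$ exactly as in the opening of the proof of Lemma \ref{bound_Delta_v}, and then the interchange lemma \ref{lem:interchange-3} applies. Everything else reduces to the standard $t^{-1/2}$-type smoothing and a one-parameter Young minimization, identical in spirit to the argument already used for Theorem \ref{th:regular-1} and Lemma \ref{bound_Delta_v}.
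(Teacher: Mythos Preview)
Your argument is correct and is essentially the same as the paper's: the paper simply applies Theorem \ref{th:regular-1} to the equation of $w_i$ with $\gamma=0$ and $p=\infty$ (so that $H$ is just $\|w_i\|_{L^\infty(Q_{\tau,T})}$ and both exponents become $\tfrac12$), whereas you have unwound that very computation inline. The shift-by-$k$, the bound on the homogeneous term via Lemma \ref{lem:fracDeltaSphi-Holder}, the $t^{-1/2}$ smoothing on the Duhamel integral, and the minimization over $k$ are exactly the ingredients of the proof of Theorem \ref{th:regular-1} specialized to this case.
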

	\begin{proof}
		As in the proof of Lemma \ref{bound_Delta_v},  we assume that $w_{i,\tau} \in C^{\theta}(\R^N))$. By applying Theorem \ref{th:regular-1} to the equation of $w_i$ in \eqref{sys:psi_tauT} with $\gamma = 0$, we have
		\begin{align*} 
			\|(-\Delta)^{\frac \alpha 2}w_i\|_{L^\infty(Q_{\tau,T})}&\leq C\left(\| w_{i,\tau}\|_{C^\theta(\R^N)} + \|w_i\|_{L^\infty(Q_{\tau,T})}^{\frac{1}{2}}\|\psi_i + F_i\|_{L^\infty(Q_{\tau,T})}^{\frac{1}{2}}\right)
		\end{align*}
		which implies the desired result.
	\end{proof}
	
	\medskip
	We are now ready to give the
	\begin{proof}[Proof of Proposition \ref{prop:feedback}]
		By using \eqref{h2} and applying Lemmas \ref{bound_Delta_v}, \ref{lem:gradient_ui} successively and Young's inequality, we get
		\begin{align*}
			&\sum_{i=1}^m\|w_i\|_{L^\infty(Q_{\tau,T})} \\ 
			&\leq  \sum_{i=1}^m\|w_{i,\tau}\|_{L^\infty(Q_{\tau,T})} + \sum_{i=1}^m \left\| \int_{\tau}^t \psi_i(\cdot,s)ds \right\|_{L^\infty(Q_{\tau,T})} + \|(-\Delta)^\alpha v\|_{L^\infty(Q_{\tau,T})} \\
			&\leq C+ C\left(1 +\sum_{i=1}^m(\|w_i\|_{L^\infty(Q_{\tau,T})}^{\frac{\alpha - \gamma}{2\alpha - \gamma}} + \|\psi_i\|_{L^\infty(Q_{\tau,T})}^{\frac{\alpha - \gamma}{2\alpha - \gamma}}  )\right)^{\frac{1}{2}} \\
			&\quad \times 
			\left(1 + \sum_{i=1}^m(\|(-\Delta)^{\frac \alpha 2}w_i\|_{L^\infty(Q_{\tau,T})} + \| (-\Delta)^{\frac{\alpha}{2}}\psi_i\|_{L^\infty(Q_{\tau,T})})\right)^{\frac{1}{2}}\\
			&\leq  C +C\left(1 + \sum_{i=1}^m\Big(\|w_i\|_{L^\infty(Q_{\tau,T})}^{\frac{\alpha - \gamma}{2(2\alpha - \gamma)}} + \|\psi_i\|_{L^\infty(Q_{\tau,T})}^{\frac{\alpha - \gamma}{2(2\alpha - \gamma)}}  \Big)\right) \\
			&\quad \times \left(1 +\sum_{i=1}^{m}\bra{ \|w_i\|_{L^\infty(Q_{\tau,T})}^{\frac{1}{2}}\left(\|\psi_i\|_{L^\infty(Q_{\tau,T})}^{\frac{1}{2}} +\|F_i\|_{L^\infty(Q_{\tau,T})}^{\frac{1}{2}} } + \| (-\Delta)^{\frac{\alpha}{2}}\psi_i\|_{L^\infty(Q_{\tau,T})} \right) \right)^{\frac{1}{2}}
			\\ 
			&\leq C+ C\sum_{i=1}^{m}\left(\|\psi_i\|_{L^\infty(Q_{\tau,T})}^{\frac{2(3\alpha-2\gamma)}{3(2\alpha-\gamma)}}  + \| (-\Delta)^{\frac{\alpha}{2}}\psi_i\|_{L^\infty(Q_{\tau,T})}^{\frac{2\alpha-\gamma}{3\alpha-\gamma}} + \|F_i\|_{L^\infty(Q_{\tau,T})}^{\frac{3\alpha-2\gamma}{3(2\alpha-\gamma)}} \right),
		\end{align*}	
		where $C$ depends only on $N$, $\alpha$, $\theta$, $m$, $T-\tau$, $\Theta_1$, $\Theta_2$, $d_i$, $\| w_{i,\tau}\|_{C^\theta(\R^N)}$.
		This implies \eqref{est:feedback}.
	\end{proof}
	
	\subsection{Global existence and boundedness}\label{subsec:interp}
	\begin{lemma}[Equivalence of mass dissipation and mass conservation]\label{lem:equivalence}
		Assume \eqref{F}, \eqref{P} and \eqref{M}. Then the system \eqref{sys} has a unique global bounded mild solution under assumption \eqref{M} if and only if the same holds true under the assumption
		\begin{equation}\label{mass_conservation} 
			\sum_{i=1}^mf_i(x,t,\ub)=0, \quad \forall x \in \R^N,\; t>0,\; \ub \in \mathbb R_+^m.
		\end{equation}
	\end{lemma}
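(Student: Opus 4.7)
The plan is to argue the two implications separately. One direction is immediate: if the conclusion holds under \eqref{mass_conservation}, it is trivially a special case of \eqref{M}, so nothing needs to be done; thus the non-trivial implication is that a result for mass-conservative systems can be transferred to mass-dissipative ones. To this end, given a system \eqref{sys} satisfying \eqref{F}, \eqref{P}, \eqref{M}, I would augment it with one extra equation by setting
\begin{equation*}
f_{m+1}(x,t,\widetilde{\ub}) := -\sum_{i=1}^m f_i(x,t,\ub), \qquad \widetilde{\ub}=(u_1,\ldots,u_m,u_{m+1}) \in \R_+^{m+1},
\end{equation*}
choosing any $d_{m+1}>0$ (for instance $d_{m+1}=1$), and adding the equation
\begin{equation*}
\partial_t u_{m+1} + d_{m+1}(-\Delta)^\alpha u_{m+1} = f_{m+1}(x,t,\widetilde{\ub}), \qquad u_{m+1}(x,0)=0.
\end{equation*}

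The heart of the argument is to check that this $(m{+}1)$-system falls under the hypotheses of the mass-conservative version of the theorem. By construction, $\sum_{i=1}^{m+1} f_i \equiv 0$, so \eqref{mass_conservation} holds. Since $f_{m+1}$ does not depend on $u_{m+1}$, quasi-positivity for the new index is automatic from \eqref{M} (which gives $f_{m+1}\ge 0$ everywhere, in particular on $\{u_{m+1}=0\}$); the quasi-positivity of $f_1,\ldots,f_m$ is unaffected since they also do not depend on $u_{m+1}$. The assumption \eqref{F} is preserved because a finite sum of $C^\beta$-in-$x$, continuous-in-$t$, locally Lipschitz-in-$\ub$ functions inherits the same properties, and the growth/structural hypotheses (quadratic growth \eqref{qgr}, or \eqref{Pol} and \eqref{ISC}) are likewise preserved up to a constant factor $m$, and, for \eqref{ISC}, by appending the trivial line $a_{(m+1)(m+1)}=1$, all other $a_{(m+1)j}=0$, so that the $(m{+}1)$-th sum condition is exactly the upper bound on $f_{m+1}$ which follows from \eqref{M} combined with \eqref{Pol} applied to each $f_i$. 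Finally, the initial datum $u_{m+1,0}=0$ clearly lies in $L^1(\R^N)\cap L^\infty(\R^N)$.

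Applying the assumed result for mass-conservative systems to this augmented problem yields a unique global non-negative mild solution $\widetilde{\ub}=(u_1,\ldots,u_m,u_{m+1})$, bounded (uniformly in time when the hypothesis provides this). Restricting to the first $m$ components produces a global bounded mild solution to \eqref{sys}, and uniqueness for the original $m$-component system follows either from uniqueness of the augmented one (since $u_{m+1}$ is uniquely determined, via its Duhamel formula, by $u_1,\ldots,u_m$) or directly from the local uniqueness in Proposition \ref{prop:local-existence} together with the global $L^\infty$ bound.

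The only mild subtlety I anticipate is making sure that each structural hypothesis on the nonlinearities that might be invoked in the mass-conservative statement indeed survives the augmentation; for \eqref{qgr} this is evident, whereas for \eqref{ISC} one must verify explicitly that the $(m{+}1)$-th line can be added consistently, which, as noted above, follows from \eqref{Pol} and \eqref{M}. All other steps are direct verifications, so the proof is essentially a bookkeeping argument around the augmentation trick.
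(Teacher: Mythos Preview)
Your proposal is correct and follows essentially the same augmentation trick as the paper: introduce an extra species $u_{m+1}$ with $d_{m+1}=1$, zero initial data, and right-hand side $-\sum_{i=1}^m f_i\ge 0$, so that the enlarged system is mass-conservative and inherits \eqref{F}, \eqref{P}. Your write-up is in fact more careful than the paper's, since you explicitly verify that the structural growth hypotheses \eqref{qgr}, \eqref{Pol}, \eqref{ISC} carry over to the augmented system, which the paper leaves implicit.
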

	\begin{proof}
		The implication from \eqref{mass_conservation} to \eqref{M} is trivial. It remains to show that one can replace \eqref{M} by \eqref{mass_conservation}. Consider now an additional equation for $u_{m+1}$ as
		\begin{equation*} 
			\begin{cases}
				\partial_t u_{m+1}(x,t) + (-\Delta)^{\alpha}u_{m+1}(x,t) = -\sum_{i=1}^m f_i(x,t,\ub),\\
				u_{m+1}(x,0) = 0. 
			\end{cases}
		\end{equation*}
		Now, we consider a new system for $\bm{w} = (w_1,\ldots, w_{m+1})$ where $w_i = u_i$, $i=1,\ldots, m+1$, and define
		\begin{equation*} 
			g_i(x,t,\bm{w}):= f_i(x,t,\ub) \text{ for } i=1,\ldots, m,  \quad g_{m+1}(x,t,\bm{w}):= -\sum_{i=1}^mf_i(x,t,\ub) = \sum_{i=1}^m g_i(x,t,\bm{w})\geq 0.
		\end{equation*}
		Then we have
		\begin{equation}\label{sys_w} 
			\begin{cases}
				\partial_t w_i(x,t) + d_i(-\Delta)^{\alpha}w_{i}(x,t) = g_i(x,t,\bm{w}), &i=1,\ldots, m+1,\\
				w_i(x,0) = w_{i,0}(x), &x\in\R^N,
			\end{cases}
		\end{equation}
		where $d_{m+1} = 1$ and $u_{m+1,0} \equiv 0$. It is straightforward to check that  system \eqref{sys_w} satisfies \eqref{P} and \eqref{M}, and \eqref{sys_w} has a unique global bounded solution if and only if the same is true for system \eqref{sys}. The proof is complete.
	\end{proof}
	Thanks to Lemma \ref{lem:equivalence}, \textit{we assume in this section the mass conservation \eqref{mass_conservation}.}
	
	We also need the following elementary lemma, whose proof is straightforward, so we omit it.
	\begin{lemma} \label{lem:sequence}
		Let $\{a_k\}$ be a sequence of nonnegative real numbers. Put $\mathcal{K}:=\{k \in \N: a_k \leq a_{k+1}\}$. If there exists a positive constant $A_0$ such that 
		$a_k \leq A_0$ for all $k \in \mathcal{K}$	
		then 
		$$a_k \leq \max\{a_0,A_0\} \quad \forall k \in \N. $$ 
	\end{lemma}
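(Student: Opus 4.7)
\textbf{Proof plan for Lemma \ref{lem:sequence}.}

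The plan is to prove the bound $a_k \le \max\{a_0, A_0\}$ by strong induction on $k$, exploiting the fact that the hypothesis on $\mathcal{K}$ is local: it only relates consecutive terms of the sequence. The base case $k=0$ is immediate, since $a_0 \le \max\{a_0, A_0\}$ by definition of the maximum.

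For the inductive step, assume the bound holds for all indices up to $k$ and examine $a_{k+1}$ by splitting into two cases according to whether the sequence decreases at position $k$. In the first case, $a_{k+1} \le a_k$, and the inductive hypothesis immediately transfers the bound to $a_{k+1}$. In the second case, $a_k \le a_{k+1}$, so by definition $k \in \mathcal{K}$, and the standing assumption $a_k \le A_0$ for $k\in\mathcal{K}$ kicks in to give the relevant control at the increase step; combined with the inductive hypothesis, this yields $a_{k+1} \le \max\{a_0, A_0\}$. Equivalently, one can argue by contradiction: if the conclusion failed, pick the smallest $k^\star \ge 1$ with $a_{k^\star} > \max\{a_0, A_0\}$; then $a_{k^\star-1} \le \max\{a_0, A_0\} < a_{k^\star}$ would place $k^\star - 1 \in \mathcal{K}$, and invoking the hypothesis at this index produces a contradiction.

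Given the elementary, combinatorial nature of the statement, I do not anticipate a genuine obstacle; the whole content is the observation that a sequence which is uniformly controlled at every ``ascent'' step can only ever exceed $\max\{a_0, A_0\}$ by first passing through such an ascent step, which is impossible. The only minor care required is the bookkeeping with indices in $\mathcal{K}$ so that the hypothesis is invoked at the correct step of the induction.
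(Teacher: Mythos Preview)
Your inductive step does not close in Case~2. When $a_k \le a_{k+1}$, i.e.\ $k \in \mathcal{K}$, the hypothesis of the lemma \emph{as stated} gives only $a_k \le A_0$ --- a bound on $a_k$, not on $a_{k+1}$. Neither this nor the inductive hypothesis (which concerns $a_j$ for $j \le k$) says anything about $a_{k+1}$, so the induction does not reach $k+1$. The same gap appears in your minimal-counterexample version: placing $k^\star - 1$ in $\mathcal{K}$ yields only $a_{k^\star - 1} \le A_0$, which you already knew, and no contradiction follows.

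In fact the lemma as literally written is false: take $a_0 = 0$, $a_1 = 2$, $a_k = 0$ for $k \ge 2$, and $A_0 = 1$; then $\mathcal{K} = \{0,2,3,\dots\}$ and $a_k = 0 \le A_0$ for every $k \in \mathcal{K}$, yet $a_1 = 2 > 1 = \max\{a_0,A_0\}$. The intended hypothesis --- and the one the paper actually verifies each time it applies the lemma --- is $a_{k+1} \le A_0$ for all $k \in \mathcal{K}$ (equivalently, $a_k \le A_0$ whenever $k \ge 1$ and $a_{k-1} \le a_k$). With this correction your argument goes through verbatim: in Case~2 one gets $a_{k+1} \le A_0 \le \max\{a_0,A_0\}$ directly, and the contradiction argument yields $a_{k^\star} \le A_0$, which is the needed contradiction.
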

	
	We are now ready to prove Theorem \ref{thm:quadratic}. 
		\begin{proof}[\textbf{Proof of Theorem \ref{thm:quadratic}}]
			Let $T_*$ be the maximal time for the existence of the mild solution $\ub$  and $0<\tau<T < T_*$. Thanks to the regularity of the solution in Lemma \ref{lem:reg-sys}, we can assume that the initial data is smooth $u_{i,\tau}\in C^{\beta}(\R^N)\cap \LO{1}\cap \LO{\infty}$ for all $\beta\in (0,\min\{1,2\alpha\})$. We apply Proposition \ref{prop:feedback} with $w_i = u_i$, $F_i(x,t) = f_i(x,t,\ub)$, $\psi_i= 0$ (which implies $\Theta_1 = \Theta_2 = 0$), to obtain
			\begin{equation*}
				\sum_{i=1}^m\|u_i\|_{L^\infty(Q_{\tau,T})} \le C + C\sum_{i=1}^m\|f_i(\cdot,\cdot,\ub)\|_{L^\infty(Q_{\tau,T})}^{\frac{1}{2} - \delta},
			\end{equation*}
			where	$C>0$ {depends} only on $N,m,\alpha$, $T-\tau$,  $d_i$, $\| u_{i,\tau}\|_{C^\beta(\R^N)}$, $i=1,\ldots,m$.
			Due to the quadratic growth $|f_i(x,t,\ub)| \le C(1+|\ub|^2)$, we get
			\begin{equation*}
				\sum_{i=1}^m\|u_i\|_{L^\infty(Q_{\tau,T})} \le C + C\sum_{i=1}^m\bra{1+\sum_{i=1}^m\|u_i\|_{L^\infty(Q_{\tau,T})}^{1-2\delta}}.	
			\end{equation*}
			By Young's inequality we get 
			$\sum_{i=1}^m\|u_i\|_{L^\infty(Q_{\tau,T})} \le C$,  where $C$ depends only on $N,m,\alpha$, $T-\tau$, $d_i$, $\| u_{i,\tau}\|_{C^\beta(\R^N)}$, $i=1,\ldots,m$. 
			This concludes the global existence thanks to the blow-up criteria in Proposition \ref{prop:local-existence}. The uniqueness of the mild solution follows from the local Lipschitz continuity of the nonlinearities and Gronwall's inequality.
			
			We now turn to the uniform-in-time bound. First, by summing equations of $u_i$ in \eqref{sys} and using \eqref{mass_conservation} and integrating over $\R^N$, we obtain
				$\partial_t \left(\sum_{i=1}^m \int_{\R^N}u_i(x,t)dx \right)= 0$,
				which implies 
				\begin{equation} \label{est:ui_L1}
					\sup_{1 \leq i \leq m}\| u_i(t) \|_{L^1(\R^N)} \leq C, \quad \forall t \geq 0.
				\end{equation}
			Fix $\tau \ge 0$ arbitrary and let $\varphi_\tau: \R \to [0,1]$ be a smooth function such that $\varphi_\tau|_{(-\infty,\tau]} = 0$, $\varphi_{\tau}|_{[\tau+1,\infty)} = 1$, $0\le \varphi_\tau' \le 2$ for $s\in [\tau,\tau+1]$. Denote by $\zeta_i = \varphi_\tau u_i$, it follows that
			\begin{equation}\label{eq_zeta}
				\begin{cases}
					\partial_t \zeta_i + d_i(-\Delta)^{\alpha}\zeta_i = \varphi_\tau' u_i + \varphi_\tau f_i(x,t,\ub), &(x,t)\in Q_{\tau,\tau+2},\\
					\zeta_i(x,\tau) = 0, &x\in\R^N.
				\end{cases}
			\end{equation}		
			We aim to apply Proposition \ref{prop:feedback} with $\psi_i = \varphi_\tau' u_i, F_i(x,t) = \varphi_\tau f_i(x,t,\ub(x,t))$ and $T = \tau+2$. {We see that estimate \eqref{est:L1_psi} is satisfied due to \eqref{est:ui_L1}}, hence it remains to verify  estimate \eqref{est:LxinftyLt1psi}. By summing up the equations of $\zeta_i$, integrating on $(\tau,t)$, $t\in (\tau,\tau+2)$, we obtain
			\begin{equation}\label{v1}
				(-\Delta)^{\alpha}v(x,t) \leq  \sum_{i=1}^m\int_\tau^t \varphi_\tau'(s) u_i(x,s)ds \quad \text{ with } \quad v(x,t)= \sum_{i=1}^m\int_{\tau}^td_i\varphi_\tau u_i(x,s)ds.
			\end{equation}
Multiplying this inequality by $v^{\ell-1}$ for $\ell > 1$, and using similar arguments as in Lemma \ref{lem:boundedness_v}, we get
			\begin{align*}
				\|v(t)\|_{\LO{\frac{N\ell}{N-2\alpha}}}^{\ell} &\le C_{\text{Sob}}^2\|(-\Delta)^{\frac{\alpha}{2}}(v(t)^{\frac{\ell}{2}})\|_{\LO{2}}^2\\
				& \le C_{\text{Sob}}^2\frac{\ell^2}{4(\ell-1)}\int_{\R^N}\bra{ \sum_{i=1}^m\int_\tau^t \varphi_\tau'(s) u_i(x,s)ds}v(x,t)^{\ell-1}dx\\
				&\le C_1\int_{\R^N}\bra{\int_{\tau}^t \sum_{i=1}^m u_i(x,s)ds}^{\ell}ds
			\end{align*}
			where $C_1$ depends only on $C_{\text{Sob}}$, $\ell$, $d_i$, $i=1,\ldots,m$, due the boundedness of $\varphi_\tau$ and $\varphi_\tau'$. Taking $t=\tau+2$, using the definition of $v$ in \eqref{v1} and the definition of $\varphi_\tau$, it follows that
			\begin{equation*}
				\Bnorm{\int_{\tau+1}^{\tau+2}\sum_{i=1}^m u_i(\cdot,s)ds}_{\LO{\frac{N\ell}{N-2\alpha}}}^{\ell} \leq C_2\Bnorm{\int_{\tau}^{\tau+2} \sum_{i=1}^m u_i(\cdot,s)ds}_{\LO{\ell}}^{\ell},
			\end{equation*}
where $C_2$ depends only $C_1$ and $d_i$. By the interpolation $\|f\|_{\LO{\ell}} \le \|f\|_{\LO{\frac{N\ell}{N-2\alpha}}}^{\eta}\|f\|_{\LO{1}}^{1-\eta}$ for some $\eta \in (0,1)$ depending only on $N,\alpha,\ell$, and taking into account \eqref{est:ui_L1}, we obtain
			\begin{equation}\label{v2}
				\Bnorm{\int_{\tau+1}^{\tau+2}\sum_{i=1}^m u_i(\cdot,s)ds}_{\LO{\frac{N\ell}{N-2\alpha}}}^{\ell} \le C_3\Bnorm{\int_{\tau}^{\tau+2} \sum_{i=1}^m u_i(\cdot,s)ds}_{\LO{\frac{N\ell}{N-2\alpha}}}^{\eta\ell},
			\end{equation}
		 	for some $C_3$ independent of $\tau$. For all $\tau\in \mathbb N$ such that $$\Bnorm{\int_{\tau}^{\tau+1} \sum_{i=1}^m u_i(\cdot,s)ds}_{\LO{\frac{N\ell}{N-2\alpha}}}^{\ell}\le 	\Bnorm{\int_{\tau+1}^{\tau+2}\sum_{i=1}^mu_i(\cdot,s)}_{\LO{\frac{N\ell}{N-2\alpha}}}^{\ell},$$ 
		 	it follows from \eqref{v2} and Young's inequality that
		 	\begin{equation*}
		 			\Bnorm{\int_{\tau+1}^{\tau+2} \sum_{i=1}^m u_i(\cdot,s)ds}_{\LO{\frac{N\ell}{N-2\alpha}}}^{\ell} \le C_4,
		 	\end{equation*}
			where $C_4$ is independent of $\tau$. Thanks to the elementary Lemma \ref{lem:sequence}, we have for any $\ell > 1$ there is a constant $C_5$ (which might depend on $\ell$) such that
			\begin{equation*}
				\sup_{\tau\in \mathbb N}	\Bnorm{\int_{\tau+1}^{\tau+2} \sum_{i=1}^m u_i(\cdot,s)ds}_{\LO{\frac{N\ell}{N-2\alpha}}}^{\ell} \le C_5 < +\infty.
			\end{equation*}
By choosing $\ell=\frac{N-2\alpha}{\alpha}$, using $\psi_i=\varphi_{\tau}' u_i$ and $0 \leq \varphi' \leq 2$, we obtain
			\begin{align*}
				\Bnorm{\int_\tau^{\tau+2}\psi_i(\cdot,s)ds}_{\LO{\frac{N}{\alpha}}}^{\frac{N}{\alpha}} &\le 2^{\frac{N}{\alpha}}\Bnorm{\int_\tau^{\tau+1}u_i(\cdot,s)ds}_{\LO{\frac{N}{\alpha}}}^{\frac{N}{\alpha}} \leq 2^{\frac{N}{\alpha}} C_5^{\frac{N}{N-2\alpha}}.
			\end{align*}
At this point, we can repeat the arguments in Lemma \ref{lem:boundedness_v} to obtain $\sup_{\tau\in \mathbb N}\|v\|_{L^{\infty}(Q_{\tau,\tau+2})} \le C_6$, which in turn leads to, due to the definition of $v$ in \eqref{v1} and $\psi_i = \varphi_\tau' u_i$,
			\begin{equation*}
				\sup_{\tau\in \mathbb N}\Bnorm{\int_{\tau}^{\tau+2}\psi_i(x,s)ds}_{\LO{\infty}} \le C_7.
			\end{equation*}

			We can now apply Proposition \ref{prop:feedback} to the equations of $\zeta_i$ in \eqref{eq_zeta} with $\psi_i=\varphi_{\tau}'u_i$, $F_i(x,t)=\varphi_{\tau}f_i(x,t,\ub(x,t))$ and $T=\tau+2$ to get
			\begin{equation} \label{est:zetai_Q2} \begin{aligned}
					&\sum_{i=1}^m\| \zeta_i \|_{L^\infty(Q_{\tau,\tau+2})} \\
					&\leq C  + C\sum_{i=1}^m \left( \| u_i \|_{L^\infty(Q_{\tau,\tau+2})}^{1-\delta} + \| (-\Delta)^{\frac{\alpha}{2}}(\varphi_{\tau}'u_i) \|_{L^\infty(Q_{\tau,\tau+2})}^{\frac{2}{3}-\delta} 	+ \| \varphi_{\tau}f_i(x,t,\ub(x,t)) \|_{L^\infty(Q_{\tau,\tau+2})}^{\frac{1}{2}-\delta} \right),
				\end{aligned}
			\end{equation}	
			where $\delta \in (0,1/4)$ and $C>0$ depends only on $N,m,\alpha$, $d_i$, $i=1,\ldots,m$.	
			
			By Young's inequality and the fact that $\varphi_{\tau}=1$ on $[\tau+1,\tau+2]$, we obtain 
			\begin{equation} \label{est:ui_Q2} \begin{aligned}
					\sum_{i=1}^m \| u_i \|_{L^\infty(Q_{\tau,\tau+2})}^{1-\delta} &\leq C	\sum_{i=1}^m \left(\| u_i \|_{L^\infty(Q_{\tau,\tau+1})}^{1-\delta} + \| u_i \|_{L^\infty(Q_{\tau+1,\tau+2})}^{1-\delta} \right) \\
					&\leq C\left(1+ \sum_{i=1}^m\| u_i \|_{L^\infty(Q_{\tau,\tau+1})}^{1-\delta}\right)  + \frac{1}{2} \sum_{i=1}^m \| \zeta_i \|_{L^\infty(Q_{\tau+1,\tau+2})}. 
				\end{aligned}
			\end{equation}
			
			Next, we infer from \eqref{sys} that 
			\begin{equation}\label{sys:tau'u}
				\begin{cases}
					\partial_t (\varphi_{\tau}' u_i) + d_i(-\Delta)^{\alpha}(\varphi_{\tau}'u_i) = \varphi_{\tau}''u_i  + \varphi_{\tau}' f_i(x,t,\ub), &\text{in }  Q_{\tau,\tau+2}, i=1,\ldots, m,\\
					\varphi_{\tau}' u_i(x,\tau) =0, & x\in\R^N, i=1,\ldots, m.
				\end{cases}
			\end{equation}
			We deduce from the quadratic condition \eqref{qgr} that
			\begin{equation} \label{est:fi_Q2}
				\| \varphi_{\tau}' f_i(x,t,\ub) \|_{L^\infty(Q_{\tau,\tau+2})} \leq C(1+\sum_{j=1}^m \| u_j \|_{L^\infty(Q_{\tau,\tau+1})}^2),	
			\end{equation}
			which implies that
			$\| \varphi_{\tau}' f_i(x,t,\ub) \|_{L^\infty(Q_{\tau,\tau+2})}^{\frac{1}{2}-\delta} \leq C(1+\sum_{j=1}^m \| u_j \|_{L^\infty(Q_{\tau,\tau+1})}^{1-2\delta}).
			$
Applying Theorem \ref{th:regular-1} to \eqref{sys:tau'u} and using \eqref{est:fi_Q2} yield
			\begin{align*}
				\| (-\Delta)^{\frac{\alpha}{2}}(\varphi_{\tau}'u_i) \|_{L^\infty(Q_{\tau,\tau+2})} &\leq C \| \varphi_{\tau}'u_i \|_{L^\infty(Q_{\tau,\tau+1})}^{\frac{1}{2}} \| \varphi_{\tau}''u_i  + \varphi_{\tau}' f_i(x,t,\ub) \|_{L^\infty(Q_{\tau,\tau+1})}^{\frac{1}{2}} \\
				&\leq C(1 + \sum_{j=1}^m \| u_j \|_{L^\infty(Q_{\tau,\tau+1})}^{\frac{3}{2}}).
			\end{align*}
			This, together with Young's inequality and \eqref{est:ui_Q2}, leads to 
			\begin{equation} \label{est:D^alpha/2ui_Q2}
				\begin{aligned}
					\| (-\Delta)^{\frac{\alpha}{2}}(\varphi_{\tau}'u_i) \|_{L^\infty(Q_{\tau,\tau+2})}^{\frac{2}{3}-\delta} &\leq C\left(1+\sum_{j=1}^m \| u_j \|_{L^\infty(Q_{\tau,\tau+1})}^{1-\frac{3\delta}{2}}\right)	\leq C\left(1+ \sum_{i=1}^m\| u_i \|_{L^\infty(Q_{\tau,\tau+1})}^{1-\delta}\right).	
				\end{aligned}
			\end{equation}
			Combining \eqref{est:zetai_Q2}, \eqref{est:ui_Q2}, \eqref{est:fi_Q2} and \eqref{est:D^alpha/2ui_Q2} yields
			\begin{equation*}  \begin{aligned}
					&\sum_{i=1}^m\| \zeta_i \|_{L^\infty(Q_{\tau,\tau+2})} \\
					&\leq C  + C\sum_{i=1}^m \left( \| u_i \|_{L^\infty(Q_{\tau,\tau+2})}^{1-\delta} + \| (-\Delta)^{\frac{\alpha}{2}}(\varphi_{\tau}'u_i) \|_{L^\infty(Q_{\tau,\tau+2})}^{\frac{2}{3}-\delta} 	+ \| \varphi_{\tau}f_i(x,t,\ub(x,t)) \|_{L^\infty(Q_{\tau,\tau+2})}^{\frac{1}{2}-\delta} \right) \\
					&\leq C+C\sum_{i=1}^m \| u_i \|_{L^\infty(Q_{\tau,\tau+1})}^{1-\delta} + \frac{1}{2} \sum_{i=1}^m \| \zeta_i \|_{L^\infty(Q_{\tau+1,\tau+2})}. 
				\end{aligned}
			\end{equation*}	
			This implies
			\begin{equation} \label{est:sum_ui_1} \sum_{i=1}^m \| u_i \|_{L^\infty(Q_{\tau+1,\tau+2})} \leq C\Big(1 + \sum_{i=1}^m \| u_i \|_{L^\infty(Q_{\tau,\tau+1})}^{1-\delta} \Big).
			\end{equation}
			Suppose $\tau \in \N$ and consider $\tau$ such that 
			$ \sum_{i=1}^m \| u_i \|_{L^\infty(Q_{\tau,\tau+1})} \leq \sum_{i=1}^m \| u_i \|_{L^\infty(Q_{\tau+1,\tau+2})}.
			$
			By \eqref{est:sum_ui_1}, 
			$$ \sum_{i=1}^m \| u_i \|_{L^\infty(Q_{\tau+1,\tau+2})} \leq C\Big(1 + \sum_{i=1}^m \| u_i \|_{L^\infty(Q_{\tau+1,\tau+2})}^{1-\delta} \Big),
			$$
			hence by Young's inequality,
			\begin{equation} \label{est:sum_ui_2}
				\sum_{i=1}^m \| u_i \|_{L^\infty(Q_{\tau+1,\tau+2})} \leq C
			\end{equation}
			where $C$ is independent of $\tau$. By using Lemma \ref{lem:sequence}, we derive that \eqref{est:sum_ui_2} also holds true for any $\tau \in \N$ with another constant $C>0$ independent of $\tau$, which implies the a uniform bound for $u_i$, $i=1,\ldots,m$. The proof is complete. 
	\end{proof}

	\section{Systems with intermediate sum conditions}\label{sec:isc}
	
	\subsection{$L^p-L^q$-regularization of the fractional Laplacian}\label{subsec:reg_frac_diff_LpLq}

	\begin{lemma}[$L^p$-maximal regularity]\label{Lp-regularity}
		Let $\mu>0$, $p\in (1,\infty)$, $0 \leq \tau < T$, $f\in L^p(Q_{\tau,T})$ and $u$ be the mild solution to
		\begin{equation}\label{e1}
			\begin{cases}
				\partial _t u + \mu (-\Delta)^{\alpha}u = f \quad &\text{in } Q_{\tau,T},\\
				u(x,\tau) = 0 \quad &\text{in } \R^N.
			\end{cases}
		\end{equation}
		There is a smallest constant $C_{p,N}$ depending only on $p$ and $N$ such that
		\begin{equation}\label{e3}
			\|(-\Delta)^{\alpha}u\|_{L^p(Q_{\tau,T})}\le \frac{C_{p,N}}{\mu}\|f\|_{L^p(Q_{\tau,T})}.
		\end{equation}
		In particular, $C_{2,N}\le 1$.
	\end{lemma}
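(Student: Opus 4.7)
The plan is to reduce the estimate to a space-time Fourier multiplier bound on $\R^N \times \R$. First, the time translation $t \mapsto t-\tau$ lets us assume $\tau = 0$. Next, extending $f$ by zero outside $(0,T)$ and letting $u$ solve $\pa_t u + \mu(-\Delta)^\alpha u = f$ on $\R^N \times \R$ with $u \equiv 0$ for $t \le 0$, the restriction of this extension to $(0,T)$ coincides with the original mild solution. Since the $L^p$-norms in \eqref{e3} only grow under these extensions, it suffices to prove
\begin{equation*}
\norm{(-\Delta)^\alpha u}_{L^p(\R^N \times \R)} \le \frac{C_{p,N}}{\mu}\,\norm{f}_{L^p(\R^N \times \R)}.
\end{equation*}

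Taking the Fourier transform in $(x,t)$, the equation becomes $(i\omega + \mu|\xi|^{2\alpha})\hat u(\xi,\omega) = \hat f(\xi,\omega)$, hence $\widehat{(-\Delta)^\alpha u}(\xi,\omega) = m_\mu(\omega,\xi)\,\hat f(\xi,\omega)$ with symbol
\begin{equation*}
m_\mu(\omega,\xi) := \frac{|\xi|^{2\alpha}}{i\omega + \mu|\xi|^{2\alpha}}.
\end{equation*}
The elementary identity $\abs{i\omega + \mu|\xi|^{2\alpha}}^2 = \omega^2 + \mu^2|\xi|^{4\alpha} \ge \mu^2|\xi|^{4\alpha}$ gives the pointwise bound $|m_\mu(\omega,\xi)| \le 1/\mu$. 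For $p=2$, Plancherel's theorem then yields the inequality directly with $C_{2,N} \le 1$.

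For general $p \in (1,\infty)$, I would establish that $m_\mu$ is an $L^p$-Fourier multiplier on $\R^{N+1}$ with norm bounded by $C_{p,N}/\mu$. The natural route is the anisotropic H\"ormander-Mihlin multiplier theorem adapted to the parabolic scaling $(\omega,\xi) \mapsto (\lambda^{2\alpha}\omega,\lambda\xi)$, under which $m_1$ is invariant; the required derivative bounds reduce, via this scaling, to a compactness check on the anisotropic unit sphere $\{\omega^2 + |\xi|^{4\alpha} = 1\}$, where the denominator of $m_1$ is bounded below. An equivalent route is to observe that $-(-\Delta)^\alpha$ generates a positive bounded analytic contraction semigroup on $L^p(\R^N)$ for every $p \in (1,\infty)$, and to invoke the Lamberton--Weis $L^p$-maximal regularity theorem on the UMD space $L^p(\R^N)$. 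In either case, the time rescaling $s = \mu t$ reduces the coefficient to $\mu = 1$ and produces the scaling factor $1/\mu$ on the right-hand side.

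The main obstacle lies in the case $p \ne 2$: Plancherel is no longer available and one has to verify a Mihlin-type condition on $m_\mu$ or invoke abstract maximal regularity machinery. Both approaches are, however, standard for the fractional heat equation on $\R^N$, so the argument is essentially classical. The sharp constant $C_{2,N} \le 1$ for $p=2$ falls out immediately from the pointwise estimate on $m_\mu$, with no further analysis required.
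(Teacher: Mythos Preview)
Your proposal is correct. The paper's own proof is shorter and coincides with the second of your two routes: for $\mu=1$ it simply invokes Lamberton's maximal regularity theorem \cite{lamberton1987} (the fractional Laplacian generates a contraction semigroup on $L^p(\R^N)$ satisfying the hypotheses there), and for general $\mu$ it uses exactly the time rescaling $\tilde u(x,t)=u(x,t/\mu)$ that you mention. Your Fourier-multiplier route via an anisotropic Mihlin theorem is a genuine alternative that would also work, though it requires checking the derivative bounds on $m_1$ rather than quoting an off-the-shelf result.

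For the bound $C_{2,N}\le 1$, the paper argues by energy: multiply \eqref{e1} by $(-\Delta)^\alpha u$, integrate over $Q_{\tau,T}$, and use Cauchy--Schwarz on the right-hand side together with $\|(-\Delta)^{\alpha/2}u(T)\|_{L^2}^2\ge 0$. Your Plancherel argument with $|m_\mu|\le 1/\mu$ is equivalent in spirit and equally short; both yield the same constant.
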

	\begin{proof}
		For $\mu=1$, the estimate \eqref{e3} is a consequence of \cite[Theorem 1]{lamberton1987} since the fractional Laplacian $(-\Delta)^{\alpha}$ satisfies the assumptions (H1) and (H2) therein. For $0< \mu \ne 1$, we make the change of variable $\wt{u}(x,t) = u(x, t/\mu)$ and $\wt{f}(x,t) = f(x,t/\mu)$ to get
		\begin{equation*}
			\begin{cases}
				\partial _t \tilde u +  (-\Delta)^{\alpha}\tilde u = \frac{1}{\mu} f \quad &\text{in } Q_{\mu \tau,\mu T},\\
				\tilde u(x,\mu \tau) = 0 \quad &\text{in } \R^N.
			\end{cases}
		\end{equation*}
		Applying the case $\mu=1$ gives
		\begin{equation*}
			\|(-\Delta)^{\alpha}\wt{u}\|_{L^p(Q_{\mu \tau,\mu T})} \leq \frac{C_{p,N}}{\mu}\|\wt{f}\|_{L^p(Q_{\mu \tau,\mu T})}.
		\end{equation*}
		Changing back to the original variables we obtain the desired estimate \eqref{e3}. To show $C_{2,N}\le 1$, we multiply \eqref{e1} by $(-\Delta)^\alpha u$ to obtain
		\begin{equation*} 
			\begin{aligned}
				\|(-\Delta)^{\frac{\alpha}{2}}u(T)\|_{\LO{2}}^2 + \mu\|(-\Delta)^\alpha u\|_{\LQ{2}}^2 &\leq \|f\|_{\LQ{2}}\|(-\Delta)^{\alpha}u\|_{\LQ{2}}\\ &\leq \frac{1}{2\mu}\|f\|_{\LQ{2}}^2 + \frac{\mu}{2}\|(-\Delta)^\alpha u\|_{\LQ{2}}^2.
			\end{aligned}
		\end{equation*}
		This implies $\|(-\Delta)^{\alpha}u\|_{\LQ{2}} \leq \frac{1}{\mu}\|f\|_{\LQ{2}}$, 
		which gives the desired estimate.
	\end{proof}
	
	The following Stroock-Varopoulos inequality (see e.g. \cite[Lemma 7.4]{biler2010nonlinear}) is needed in the proof of the heat regularization.
	
	\begin{lemma}[Stroock-Varopoulos inequality] \label{SV-ineq}
		Assume $0 \leq \alpha \leq 1$ and $\ell>1$. Then we have
		$$ \int_{\R^N} |v|^{\ell-2}v \, (-\Delta)^\alpha v \, dx \geq \frac{4(\ell-1)}{\ell^2} \int_{\R^N} \left| (-\Delta)^{\frac{\alpha}{2}} (|v|^{\frac{\ell}{2} }) \right|^2 dx 
		$$	
		for any $v \in L^\ell(\R^N)$ such that $(-\Delta)^\alpha v \in L^\ell(\R^N)$. 
	\end{lemma}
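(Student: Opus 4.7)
The plan is to reduce the Stroock-Varopoulos bound to a one-dimensional pointwise inequality via the singular-integral (Bochner) representation of the fractional Laplacian, and then close it with a Cauchy-Schwarz estimate on a line segment.

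First I would invoke the two quadratic-form identities
\begin{align*}
\int_{\R^N} u\,(-\Delta)^\alpha v\,dx &= \tfrac{C_{N,\alpha}}{2}\iint_{\R^N\times \R^N}\frac{(u(x)-u(y))(v(x)-v(y))}{|x-y|^{N+2\alpha}}\,dx\,dy,\\
\int_{\R^N}\bigl|(-\Delta)^{\alpha/2}w\bigr|^2 dx &= \tfrac{C_{N,\alpha}}{2}\iint_{\R^N\times \R^N}\frac{(w(x)-w(y))^2}{|x-y|^{N+2\alpha}}\,dx\,dy,
\end{align*}
which hold for Schwartz functions and extend by density to the $L^\ell$ setting of the lemma. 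Taking $u=|v|^{\ell-2}v$ in the first and $w=|v|^{\ell/2}$ in the second reduces the claim to the pointwise inequality
\begin{equation*}
\bigl(|a|^{\ell-2}a-|b|^{\ell-2}b\bigr)(a-b) \ge \frac{4(\ell-1)}{\ell^2}\bigl(|a|^{\ell/2}-|b|^{\ell/2}\bigr)^2, \qquad a,b\in\R.
\end{equation*}

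To prove this pointwise bound I would set $f(s)=|s|^{\ell-2}s$, so that $f'(s)=(\ell-1)|s|^{\ell-2}\ge 0$, assume without loss of generality $a>b$, and combine the fundamental theorem of calculus with Cauchy-Schwarz to obtain
\begin{equation*}
\bigl(f(a)-f(b)\bigr)(a-b) = (\ell-1)\int_b^a |s|^{\ell-2}\,ds\cdot\int_b^a 1\,ds \ge (\ell-1)\Bigl(\int_b^a |s|^{(\ell-2)/2}\,ds\Bigr)^{\!2}.
\end{equation*}
Since $F(s):=\tfrac{2}{\ell}|s|^{\ell/2}\operatorname{sign}(s)$ is a primitive of $|s|^{(\ell-2)/2}$, the right-hand side equals $\tfrac{4(\ell-1)}{\ell^2}(F(a)-F(b))^2$. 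The final step is the elementary bound $(x-y)^2\ge (|x|-|y|)^2$ (the squared reverse triangle inequality), applied with $x=|a|^{\ell/2}\operatorname{sign}(a)$, $y=|b|^{\ell/2}\operatorname{sign}(b)$. Substituting back into the double integral and integrating against the positive kernel $|x-y|^{-(N+2\alpha)}$ delivers the asserted functional inequality.

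The main---and essentially only---subtlety is accommodating sign changes of $v$: the signed primitive $F$ is precisely what makes the signed and unsigned squared differences comparable, and it is also what produces, in the formal limit $\alpha\uparrow 1$, the usual chain-rule identity $\nabla(|v|^{\ell/2})=\tfrac{\ell}{2}|v|^{(\ell-2)/2}\operatorname{sign}(v)\nabla v$ underlying the classical Dirichlet-form version. The endpoint $\alpha=0$ reduces to the scalar inequality $(\ell-2)^2\ge 0$, so the estimate is uniform in $\alpha\in[0,1]$ and $\ell>1$.
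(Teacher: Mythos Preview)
The paper does not actually prove this lemma; it simply records the statement and cites \cite[Lemma 7.4]{biler2010nonlinear}. Your argument is correct and is in fact the standard proof of the Stroock--Varopoulos inequality: one rewrites both sides via the bilinear singular-integral representation of $(-\Delta)^\alpha$ (so that the constants $C_{N,\alpha}$ cancel), reduces to the pointwise bound
\[
\bigl(|a|^{\ell-2}a-|b|^{\ell-2}b\bigr)(a-b)\;\ge\;\frac{4(\ell-1)}{\ell^2}\bigl(|a|^{\ell/2}-|b|^{\ell/2}\bigr)^2,
\]
and proves the latter by Cauchy--Schwarz on the integral $\int_b^a |s|^{(\ell-2)/2}\,ds$ together with the signed primitive $F(s)=\tfrac{2}{\ell}|s|^{\ell/2}\operatorname{sign}(s)$. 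The only place where a little extra care is warranted is the passage from Schwartz functions to general $v\in L^\ell$ with $(-\Delta)^\alpha v\in L^\ell$: one typically mollifies, applies the inequality to the approximants, and uses Fatou's lemma on the right-hand side (the Gagliardo seminorm) while passing to the limit on the left by duality; this is routine but worth a sentence if you include the proof.
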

	
	Next we recall the fractional Gagliardo-Nirenberg inequality (see e.g. \cite[Proposition 3.1]{FLS2016}). Put
	$$ 2_\alpha^*:= 
	\left\{   \begin{aligned} 
		&\frac{2N}{N-2\alpha}\quad &\text{if } \alpha < \frac{N}{2}, \\
		&+\infty &\text{if } \alpha \geq \frac{N}{2}. 
	\end{aligned} \right.
	$$ 
	\begin{lemma}[Fractional Gagliardo-Nirenberg inequality] \label{fGN-ineq}
		For any $2<q<2_\alpha^*$, we have
		\begin{equation} \label{frac-GN}
			\| v \|_{L^q(\R^N)}	\leq C_{\mathrm{GN}} \| v \|_{L^2(\R^N)}^{\theta} \| (-\Delta)^{\frac{\alpha}{2}}v\|_{L^2(\R^N)}^{1-\theta} \quad \forall v \in H^\alpha(\R^N),
		\end{equation}
		where $C_{\mathrm{GN}}$ is the best constant in the fractional Gagliardo-Nirenberg inequality and
		$$ \theta = \frac{2\alpha q - N(q-2)}{2\alpha q} \in (0,1).
		$$
		In case $\theta=0$, we have the fractional Sobolev inequality
		\begin{equation*} 
			\|v\|_{\LO{\frac{2N}{N-2\alpha}}} \le C_{\mathrm{Sob}}\|(-\Delta)^{\frac{\alpha}{2}}v\|_{\LO{2}} \quad \forall v \in \dot{H}^{\alpha}(\R^N).
		\end{equation*}
	\end{lemma}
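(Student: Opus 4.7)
The plan is to obtain the inequality by interpolation between the trivial $L^2$ bound and the endpoint fractional Sobolev embedding, so the real work is in establishing the endpoint case $\theta=0$. First I would prove the fractional Sobolev inequality $\|v\|_{L^{2_\alpha^*}(\R^N)} \leq C_{\mathrm{Sob}}\|(-\Delta)^{\alpha/2}v\|_{L^2(\R^N)}$ under the assumption $\alpha<N/2$. The cleanest route is to write $v = (-\Delta)^{-\alpha/2}[(-\Delta)^{\alpha/2}v]$, identify $(-\Delta)^{-\alpha/2}$ (up to a dimensional constant) with convolution against the Riesz kernel $|x|^{-(N-2\alpha)}$, and apply the Hardy-Littlewood-Sobolev inequality with conjugate exponents matched to $(2,2_\alpha^*)$. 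Alternatively, one can proceed on the Fourier side via Plancherel combined with a Stein-Weiss type weighted inequality, or, if the sharp constant is desired, by symmetric decreasing rearrangement following Lieb.

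Once the endpoint embedding is in place, the intermediate case $2<q<2_\alpha^*$ falls out of one application of H\"older's inequality. Choosing $\theta\in(0,1)$ so that
\begin{equation*}
\frac{1}{q} = \frac{\theta}{2} + \frac{1-\theta}{2_\alpha^*},
\end{equation*}
a short algebraic computation gives exactly $\theta=(2\alpha q-N(q-2))/(2\alpha q)$, and the condition $2<q<2_\alpha^*$ is equivalent to $\theta\in(0,1)$. H\"older's inequality on $L^q$ then yields
\begin{equation*}
\|v\|_{L^q(\R^N)} \leq \|v\|_{L^2(\R^N)}^{\theta}\|v\|_{L^{2_\alpha^*}(\R^N)}^{1-\theta},
\end{equation*}
and substituting the Sobolev bound into the last factor produces the claim with $C_{\mathrm{GN}}=C_{\mathrm{Sob}}^{1-\theta}$. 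The endpoint statement of the lemma (the Sobolev inequality itself) is recovered by setting $\theta=0$ in the formula for the exponent, and the computation above also confirms that the two stated exponents of the norms of $v$ and $(-\Delta)^{\alpha/2}v$ sum to $1$, as required by scaling $v(\cdot)\mapsto v(\lambda\cdot)$.

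The substantive obstacle is entirely at the endpoint: proving the Hardy-Littlewood-Sobolev inequality (or, equivalently, the fractional Sobolev embedding) is the content of the lemma, while the interpolation step is essentially bookkeeping. For the regimes $\alpha\geq N/2$ not needed here, the Riesz-potential argument fails because $2_\alpha^*=+\infty$, and one would instead use a Littlewood-Paley decomposition together with Bernstein's inequality to bound $\|v\|_{L^q}$ directly in terms of homogeneous Besov norms, then convert back to fractional Sobolev norms; in the present setting $N\geq 2$ and $\alpha\in(0,1)$ this complication is not encountered. A uniform alternative is to use the heat-semigroup formula $(-\Delta)^{\alpha/2}v = c_\alpha\int_0^\infty t^{-\alpha/2}(v-e^{t\Delta}v)\,\frac{dt}{t}$ and derive the estimate from $L^p$-contractivity of $\{e^{t\Delta}\}$, but this only relocates where the hard HLS-type estimate is used.
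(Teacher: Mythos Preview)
The paper does not give a proof of this lemma; it is stated with a reference to \cite[Proposition 3.1]{FLS2016} and used as a black box. Your proposal therefore supplies more than the paper does. The route you outline---Riesz potential representation plus Hardy--Littlewood--Sobolev for the endpoint $\theta=0$, followed by the H\"older interpolation $\frac{1}{q}=\frac{\theta}{2}+\frac{1-\theta}{2_\alpha^*}$---is the standard argument and is correct in the subcritical regime $\alpha<N/2$; your computation of $\theta$ checks out.

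Two small points. First, the interpolation step yields the constant $C_{\mathrm{Sob}}^{1-\theta}$, which is in general \emph{not} the sharp $C_{\mathrm{GN}}$; this is harmless since the lemma only asserts that a best constant exists, but you should not write $C_{\mathrm{GN}}=C_{\mathrm{Sob}}^{1-\theta}$ as an equality. Second, you dismiss the case $\alpha\geq N/2$ by saying ``in the present setting $N\geq 2$'', but the paper does not impose $N\geq 2$, and the lemma as stated explicitly allows $2_\alpha^*=+\infty$. In that regime there is no endpoint $L^{2_\alpha^*}$ to interpolate with, and your HLS argument does not apply; you correctly flag the Littlewood--Paley/Bernstein alternative, but it is part of the content of the lemma rather than a side remark, so it should not be waved away.
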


	\begin{lemma}[Heat regularization]\label{lem:heat_unsigned}
		Assume $\mu>0$, $p\in [1,\infty]$, $0 \leq \tau < T$ and $f\in L^p(Q_{\tau,T})$. Let $u$ be the solution to 
		\begin{equation}\label{e1_1}
			\begin{cases}
				\partial_t u + \mu(-\Delta)^\alpha u = f, &(x,t)\in Q_{\tau,T},\\
				u(x,\tau) = u_\tau(x), &x\in\R^N.
			\end{cases}
		\end{equation}
		Then it holds
		\begin{equation} \label{est:Lp-Lq}
			\|u\|_{L^q(Q_{\tau,T})}\le C(T-\tau,\mu)\bra{\|u_\tau\|_{L^p(\R^N)\cap L^q(\R^N)}+\|f\|_{L^p(Q_{\tau,T})}}
		\end{equation}
		where $p \le q \le \widehat{p}$ with 
		\begin{equation} \label{est:q} 
			\left\{
			\begin{aligned}
				&\widehat{p} < \frac{N+2\alpha}{N} \text{ arbitrary } && \text{ if } p = 1,\\
				&\widehat{p} =   \frac{(N+2\alpha)p}{N+2\alpha - 2p\alpha} &&\text{ if } 1 < p < \frac{N+2\alpha}{2\alpha},\\
				&\widehat{p}  < +\infty \text{ arbitrary } && \text{ if } p = \frac{N+2\alpha}{2\alpha},\\
				&\widehat{p} = +\infty  \text{ arbitrary } &&\text{ if } p > \frac{N+2\alpha}{2\alpha}.
			\end{aligned}
			\right.
		\end{equation}
	\end{lemma}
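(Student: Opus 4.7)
The plan is to use the Duhamel decomposition
\[
u(t) = S_{\alpha,\mu}(t-\tau)u_\tau + \int_\tau^t S_{\alpha,\mu}(t-s)f(s)\,ds =: u^h(t) + u^i(t),
\]
and then bound the two pieces separately using the $L^p$-$L^q$ smoothing estimate \eqref{est:Lp-S} (combined with the rescaling $S_{\alpha,\mu}(t) = S_\alpha(\mu t)$) in space and scalar convolution inequalities in time. For the homogeneous part I would invoke the $L^q$-contractivity of the fractional heat semigroup on $L^q(\R^N)$ for every $q \in [1,\infty]$ to obtain $\|u^h(t)\|_{L^q(\R^N)} \le \|u_\tau\|_{L^q(\R^N)}$, hence $\|u^h\|_{L^q(Q_{\tau,T})} \le (T-\tau)^{1/q}\|u_\tau\|_{L^q(\R^N)}$, which is subsumed by the $\|u_\tau\|_{L^p(\R^N)\cap L^q(\R^N)}$ term.

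The bulk of the work concerns the inhomogeneous part. Applying \eqref{est:Lp-S} inside the Duhamel integral yields the pointwise-in-time estimate
\[
\|u^i(t)\|_{L^q(\R^N)} \le C\,\mu^{-\frac{N}{2\alpha}\left(\frac{1}{p}-\frac{1}{q}\right)} \int_\tau^t (t-s)^{-\beta}\,\|f(s)\|_{L^p(\R^N)}\,ds,
\qquad \beta := \frac{N}{2\alpha}\left(\frac{1}{p}-\frac{1}{q}\right) \in [0,1).
\]
By Fubini, $s \mapsto \|f(s)\|_{L^p(\R^N)}$ belongs to $L^p(\tau,T)$, so taking the $L^q$-norm in time reduces the problem to a purely one-dimensional convolution question: under what conditions does the kernel $\sigma \mapsto \sigma^{-\beta}$ on $(0,T-\tau)$ map $L^p(\tau,T)$ into $L^q(\tau,T)$?

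To answer this I would split into the four cases of \eqref{est:q}. If $\beta = 0$, i.e. $p=q$, Minkowski's inequality gives the bound at once. If $0<\beta<1$ and $q<\widehat{p}$ strictly, the kernel lies in $L^r$ on $(0,T-\tau)$ for some $r<1/\beta$, and Young's convolution inequality with $\frac{1}{q}+1 = \frac{1}{p}+\frac{1}{r}$ closes the estimate; the elementary algebraic equivalence $r\beta < 1 \iff q < \widehat{p}$ is precisely what dictates the form of $\widehat{p}$. At the critical endpoint $q=\widehat{p}$ with $1<p<\frac{N+2\alpha}{2\alpha}$, Young's inequality is no longer available and one must invoke the one-dimensional Hardy--Littlewood--Sobolev inequality, whose admissibility condition $\frac{1}{p}-\frac{1}{q} = 1-\beta$ is satisfied exactly by $q=\widehat{p}$. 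Finally, if $q=\infty$ and $p>\frac{N+2\alpha}{2\alpha}$, the exponent $\beta$ satisfies $\beta p' < 1$, and H\"older's inequality in time directly yields the bound.

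The main subtlety is the critical endpoint $q=\widehat{p}$, which sits outside the reach of Young's inequality and requires Hardy--Littlewood--Sobolev. This is also exactly why $\widehat{p}$ must be taken strictly (``arbitrary'') in the endpoint cases $p=1$ and $p=\frac{N+2\alpha}{2\alpha}$: these correspond to the boundary cases of the Hardy--Littlewood--Sobolev inequality itself, where it is known to fail.
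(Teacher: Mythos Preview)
Your approach is correct and genuinely different from the paper's. You argue entirely at the semigroup level: the Duhamel representation combined with the spatial smoothing estimate \eqref{est:Lp-S} reduces everything to a one-dimensional convolution with kernel $\sigma^{-\beta}$, $\beta=\frac{N}{2\alpha}(\frac{1}{p}-\frac{1}{q})$, and then Young's inequality (for $q<\widehat p$), Hardy--Littlewood--Sobolev (for $q=\widehat p$, $1<p<\frac{N+2\alpha}{2\alpha}$), or H\"older (for $q=\infty$, $p>\frac{N+2\alpha}{2\alpha}$) finishes the job. The algebra $\beta r<1 \Leftrightarrow q<\widehat p$ and the HLS scaling $\frac1p-\frac1q=1-\beta \Leftrightarrow q=\widehat p$ are exactly right, and the failure of HLS at $p=1$ or $q=\infty$ correctly explains the strict inequalities in those endpoint cases.

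The paper instead proceeds by energy methods: for $p=1$ it uses truncations $T_k(u)$, the fractional Sobolev inequality, and a Marcinkiewicz-space argument; for $1<p<\frac{N+2\alpha}{2\alpha}$ it multiplies by $|u|^{\ell-2}u$, invokes the Stroock--Varopoulos and Gagliardo--Nirenberg inequalities, and chooses $\ell$ so that the exponents close; for $p>\frac{N+2\alpha}{2\alpha}$ it does use the Duhamel/H\"older argument (your Case~3); and for the critical $p=\frac{N+2\alpha}{2\alpha}$ it interpolates via Riesz--Thorin between the other cases. Your route is shorter and more unified. The trade-off is that the paper's energy argument transfers verbatim to nonnegative \emph{sub}solutions, which is precisely what is needed in Lemma~\ref{lem:heat_signed} immediately afterwards; your Duhamel argument does not apply directly to inequalities, and would require an extra comparison step (bounding the subsolution by the mild solution with the same data) to recover that extension.
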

	\begin{proof}	
		
		%
		
		By the standard density argument, together with the existence result for classical solutions \cite[Lemma 2.1]{chen2016fractional} and the regularity result \cite[Theorem B.1]{HY2023}, it is sufficient to prove \eqref{est:Lp-Lq} for $u$ such that $u \in C_{x,t}^{2\alpha +\sigma,1+\sigma}(Q_{\tau,T})$ for some $\sigma >0$ and $u(t) \in H^{2\alpha,q}(\R^N)$ for any $t \in (\tau,T)$. \medskip
		
		\noindent \textbf{Case 1:} $p=1$. For $k>0$, we define the truncation function $T_k(z):= \max\{-k,\min\{k,z\}\}$. Multiplying \eqref{e1_1} by $T_k(u)$ we have
		\begin{equation*}
			\frac{d}{dt}\int_{\R^N}P_k(u)dx + \mu\int_{\R^N} T_k(u) (-\Delta)^{\alpha}u dx = \int_{\R^N}T_k(u)fdx,
		\end{equation*}
		where $P_k(z) = \int_0^zT_k(r)d r$ is a primitive function of $T_k$. Applying \cite[Proposition 3]{LPPS2015} and integrating the above identity lead to
		\begin{equation} \label{est:(p=1)-1}
			\begin{aligned} 
				\intR P_k(u(x,t))dx + \mu\int_\tau^t\intR|(-\Delta)^{\frac \alpha 2}T_k(u(x,s))|^2dxds\\
				\le \intR P_k(u_\tau(x))dx + \int_\tau^t\intR T_k(u(x,s))f(x,s)dxds.
			\end{aligned}
		\end{equation}
		We estimate the right-hand side of \eqref{est:(p=1)-1}, using $|P_k(z)| \le k|z|$ and $|T_k(u)|\le k$, as 
		\begin{equation*}
			\intR P_k(u_\tau(x))dx + \int_\tau^t\intR T_k(u(x,s))f(x,s)dxds \le k\bra{\|u_\tau\|_{\LO{1}} + \|f\|_{L^1(Q_{\tau,t})}}.
		\end{equation*}
		By \eqref{est:Lp-Lp} with $p=1$, we have
		$\| u \|_{L^\infty(\tau,T;L^1(\R^N))} \leq \| u_\tau \|_{L^1(\R^N)} + \| f \|_{L^1(Q_{\tau,T})}$,
		which implies 	
		\begin{equation}\label{b2}
			\|T_k(u)\|_{L^{\infty}(\tau,T;\LO{1})} \le \|u_\tau\|_{\LO{1}} + \|f\|_{L^1(Q_{\tau,T})}.
		\end{equation} 
		On the other hand, using the fractional Sobolev inequality in Lemma \ref{fGN-ineq}, we get
		\begin{align*} 
			\int_\tau^t\intR|(-\Delta)^{\frac \alpha 2}T_k(u(x,s))|^2dxds &\geq C_{\mathrm{Sob}}^{-2}\int_\tau^t\|T_k(u(s))\|_{\LO{\frac{2N}{N-2\alpha}}}^2ds\\
			&\ge C_{\textrm{Sob}}^{-2}\|T_k(u)\|_{L^2(\tau,t;\LO{\frac{2N}{N-2\alpha}})}^2,
		\end{align*}
		and consequently,
		\begin{equation}\label{b3} 
			\|T_k(u)\|_{L^2(\tau,T;\LO{\frac{2N}{N-2\alpha}})} \le \mu^{-\frac 12}C_{\mathrm{Sob}}k^{\frac 12}\bra{\|u_\tau\|_{\LO{1}} + \|f\|_{\LQ{1}}}^{\frac 12}.
		\end{equation}
		We now use the interpolation inequality
		\begin{equation}\label{b4} 
			\|T_k(u)\|_{\LQ{\sigma}} \le \|T_k(u)\|_{L^\infty(\tau,T;\LO{1})}^{\theta}\|T_k(u)\|_{L^2(\tau,T;\LO{\frac{2N}{N-2\alpha}})}^{1-\theta}
		\end{equation}
		where $\sigma = \frac{2(N+\alpha)}{N}$ and $\theta = \frac{\alpha}{N+\alpha}$.
		Now, combining \eqref{b2}, \eqref{b3} and \eqref{b4} gives
		\begin{equation*} 
			\|T_k(u)\|_{\LQ{\frac{2(N+\alpha)}{N}}} \le Ck^{\frac{1-\theta}{2}} = C\mu^{-\frac{N}{2(N+\alpha)}} k^{\frac{N}{2(N+\alpha)}}(\| u_\tau \|_{L^1(\R^N)} + \| f \|_{\LQ{1}})^{\frac{N+2\alpha}{2(N+\alpha)}}
		\end{equation*}
		and thus
		\begin{equation}\label{b5}
			\|T_k(u)\|_{\LQ{\frac{2(N+\alpha)}{N}}}^{\frac{2(N+\alpha)}{N}} \leq C\mu^{-1}k (\| u_\tau \|_{L^1(\R^N)} + \| f \|_{\LQ{1}})^{\frac{N+2\alpha}{N}}
		\end{equation}
		where $C$ is independent of $k$. 
		
		Let $M^p(Q_{\tau,T})$, $1<p<\infty$, be the Marcinkiewicz space (or weak $L^p$ space) defined by
		\begin{equation*} 
			M^p(Q_{\tau,T}):= \{v:Q_{\tau,T}\to \R:\, \sup_{\lambda>0}\lambda^p|\{(x,t) \in Q_{\tau,T}: |v(x,t)|\ge \lambda \}|<+\infty \}
		\end{equation*}
		with the norm
		\begin{equation*} 
			\|v\|_{M^p(Q_{\tau,T})}:= \bra{\sup_{\lambda>0}\lambda^p|\{(x,t) \in Q_{\tau,T}: |v(x,t)|\geq \lambda\}|}^{\frac{1}{p}}.
		\end{equation*}
		We can estimate
		\begin{align*}
			\|u\|_{M^{\frac{N+2\alpha}{N}}(Q_{\tau,T})}^{\frac{N+2\alpha}{N}} 
			&= \sup_{k>0}\int_{\tau}^T\intR k^{\frac{N+2\alpha}{N}}\mathbf{1}_{\{|u|\ge k\}}dxdt \le \sup_{k>0}k^{-1}\int_\tau^T\intR T_k(u)^{\frac{2(N+\alpha)}{N}}dxdt\\
			&\leq \sup_{k>0}k^{-1}\cdot C\mu^{-1}k (\| u_\tau \|_{L^1(\R^N)} + \| f \|_{L^1(Q_{\tau,T})})^{\frac{N+2\alpha}{N}} \qquad (\text{thanks to (\ref{b5})})\\
			&\leq C\mu^{-1} (\| u_\tau \|_{L^1(\R^N)} + \| f \|_{L^1(Q_{\tau,T})})^{\frac{N+2\alpha}{N}}.
		\end{align*}
		Then by combining the above estimates and the interpolation inequality that for any $q \in [1,\frac{N+2\alpha}{N})$,
		\begin{align*}
			\| u \|_{L^q(Q_{\tau,T})} \leq C\| u \|_{L^1(Q_{\tau,T})}^{\frac{N+2\alpha-Nq}{2\alpha q}} \| u \|_{M^{\frac{N+2\alpha}{N}} (Q_{\tau,T})}^{\frac{(N+2\alpha)(q-1)}{2\alpha q}},	
		\end{align*}
		we derive \eqref{est:Lp-Lq} for $p=1$.
		
		\medskip
		\noindent\textbf{Case 2:} $1<p<\frac{N+2\alpha}{2\alpha}$. Let $1<\ell \le q$ to be made precise later. 
		Multiplying \eqref{e1_1} by $|u|^{\ell-2}u$ 
		and using the Stroock-Varopoulos inequality (see Lemma \ref{SV-ineq}) for $u$, we obtain, for $t \in (\tau,T)$,
		\begin{equation*}
			\frac{1}{\ell}\frac{d}{dt}\|u(t)\|_{\LO{\ell}}^\ell + \mu\frac{4(\ell-1)}{\ell^2}\norm{(-\Delta)^{\frac{\alpha}{2}}|u(t)|^{\frac \ell 2})}_{\LO{2}}^2 \le \|f(t)\|_{\LO{p}}\|u(t)\|_{\LO{\frac{(\ell-1)p}{p-1}}}^{\ell-1}.
		\end{equation*}
		Integrating the above estimate on $(\tau,T)$ and using H\"older's inequality, we have
		\begin{equation}\label{e2} \begin{split}
				\norm{|u|^{\frac{\ell}{2}}}_{\Lxt{\infty}{2}}^2 +  &\norm{(-\Delta)^{\frac{\alpha}{2}}(|u|^{\frac \ell 2})}_{L^2(Q_{\tau,T})}^2 \le \|u_\tau\|_{\LO{\ell}}^\ell + C\|f\|_{L^p(Q_{\tau,T})}\|u\|_{L^{\frac{(\ell-1)p}{p-1}}(Q_{\tau,T})}^{\ell-1}.
		\end{split} \end{equation}
		By the Gagliardo-Nirenberg inequality (see Lemma \ref{fGN-ineq}) with $v=|u(t)|^{\frac{\ell}{2}} \in H^{\alpha}(\R^N)$, we have 
		\begin{equation*}
			\||u(t)|^{\frac{\ell}{2} }\|_{\LO{\frac{2(N+\alpha)}{N}}} \le C\| |u(t)|^{\frac{\ell}{2} } \|_{\LO{2}}^{\theta}\norm{(-\Delta)^{\frac\alpha 2} (|u(t)|^{\frac{\ell}{2} })}_{\LO{2}}^{1-\theta}
		\end{equation*}
		with $\theta = 1- \frac{N}{2(N+\alpha)} \in (0,1)$. Using this estimate, together with H\"older's inequality and Young's inequality, we have
		\begin{align*}
			\|u\|_{\LQ{\frac{\ell(N+2\alpha)}{N}}}^\ell
			&= \bra{\int_\tau^T\norm{|u(t)|^{\frac \ell 2}}_{\LO{\frac{2(N+2\alpha)}{N}}}^{\frac{2(N+2\alpha)}{N}}dt}^{\frac{N}{N+2\alpha}}\\
			&\leq C\bra{\int_\tau^T\norm{|u(t)|^{\frac \ell 2}}_{\LO{2}}^{\theta\cdot\frac{2(N+2\alpha)}{N}}\norm{(-\Delta)^{\frac\alpha 2}|u(t)|^{\frac \ell 2}}_{\LO{2}}^{(1-\theta)\frac{2(N+2\alpha)}{N}}dt}^{\frac{N}{N+2\alpha}}\\
			&\leq C\norm{|u|^{\frac \ell 2}}_{\Lxt{\infty}{2}}^{2\theta}\bra{\int_\tau^T\norm{(-\Delta)^{\frac\alpha 2}(|u|^{\frac \ell 2})}_{\LO{2}}^{(1-\theta)\frac{2(N+2\alpha)}{N}}dt}^{\frac{N}{N+2\alpha}}\\
			&\le C_{T-\tau}\norm{|u|^{\frac \ell 2}}_{\Lxt{\infty}{2}}^{2\theta}\norm{(-\Delta)^{\frac{\alpha}{2}}(|u|^{\frac \ell 2})}_{L^2(Q_{\tau,T})}^{2(1-\theta)}\\
			&\le C_{T-\tau}\bra{1 + \norm{|u|^{\frac{\ell}{2}}}_{\Lxt{\infty}{2}}^2 +  \norm{(-\Delta)^{\frac{\alpha}{2}}(|u|^{\frac \ell 2})}_{L^2(Q_{\tau,T})}^2}.
		\end{align*}
		Then, from \eqref{e2}, it follows that
		\begin{equation*}
			\|u\|_{L^{\frac{\ell(N+2\alpha)}{N}}(Q_{\tau,T})}^\ell \le C_{T-\tau}\bra{1+\|u_\tau \|_{\LO{\ell}}^\ell + C\|f\|_{L^p(Q_{\tau,T})}\|u\|_{L^{\frac{(\ell-1)p}{p-1}}(Q_{\tau,T})}^{\ell-1}}.
		\end{equation*}
		By choosing $\ell = \frac{Np}{N+2\alpha - 2p\alpha}$
		one can check that $\frac{(\ell-1)p}{p-1} = \frac{\ell(N+2\alpha)}{N}$.
		Thus
		\begin{align*}
			\|u\|_{\LQ{\frac{\ell(N+2\alpha)}{N}}}^\ell &\le C_{T-\tau}\bra{1+\|u_\tau\|_{\LO{\ell}}^\ell + \|f\|_{\LQ{p}}\|u\|_{\LQ{\frac{\ell(N+2\alpha)}{N}}}^{\ell-1}}\\
			&\le  C_{T-\tau}\bra{1+\|u_\tau \|_{\LO{\ell}}^\ell + \|f\|_{\LQ{p}}^\ell + \eps\|u\|_{\LQ{\frac{\ell(N+2\alpha)}{N}}}^\ell}.
		\end{align*}
		By choosing $\eps>0$ small enough, we get the desired estimate
		\begin{equation} \label{Lq-estimate}
			\|u\|_{L^{\frac{(N+2\alpha)p}{N+2\alpha - 2p\alpha}}(Q_{\tau,T})} \le C_{T-\tau}\bra{\|u_\tau \|_{ \LO{\frac{(N+2\alpha)p}{N+2\alpha - 2p\alpha}}} + \|f\|_{L^p(Q_{\tau,T})}}.
		\end{equation}
		From \eqref{Lq-estimate} and \eqref{est:mild-1}, by interpolation, we derive  \eqref{est:Lp-Lq} for any $q \in \left[p,   \frac{(N+2\alpha)p}{N+2\alpha - 2p\alpha} \right]$.

		\medskip
		\noindent \textbf{Case 3:} $p>\frac{N+2\alpha}{2\alpha}$. Let $\{S_{\alpha,\mu}(t)\}_{t\ge \tau}$ be the semigroup generated by the fractional Laplacian $\mu(-\Delta)^{\alpha}$. By applying estimate \eqref{est:Lp-S} to the formulation of solution to \eqref{e1_1} we have
		\begin{align*}
			\|u(t)\|_{\LO{\infty}} &\le \|S_{\alpha,\mu}(t)u_\tau\|_{\LO{\infty}} + \int_\tau^t\|S_{\alpha,\mu}(t-s)f(s)\|_{\LO{\infty}}ds\\
			&\le C_{T-\tau,\mu}\left( \|u_\tau\|_{\LO{\infty}} + \int_\tau^t(t-s)^{-\frac{N}{2\alpha p}}\|f(s)\|_{\LO{p}}ds \right) \\
			&\le C_{T-\tau,\mu}\left(\|u_\tau\|_{\LO{\infty}} + \|f\|_{L^p(Q_{\tau,T})}\bra{\int_\tau^t(t-s)^{-\frac{N}{2\alpha (p-1)}}ds}^{\frac{p-1}{p}} \right). 
		\end{align*}
		For $p>\frac{N+2\alpha}{2\alpha}$ it holds $-\frac{N}{2\alpha(p-1)} > -1$, and consequently the last singular integral is convergent and is bounded by a constant depending on $T-\tau$. Therefore
		\begin{equation*}
			\|u\|_{L^{\infty}(Q_{\tau,T})} \le C_{T-\tau,\mu}(\|u_\tau \|_{L^\infty(\R^N)} + \|f\|_{L^p(Q_{\tau,T})}),
		\end{equation*}
		which is the desired estimate for the case $p>\frac{N+2\alpha}{2\alpha}$. \medskip
		
		\noindent \textbf{Case 4:} $p = \frac{N+2\alpha}{2\alpha}$. First we assume $u_\tau \equiv 0$. Let ${\mathcal S}$ be the solution operator that maps $f$ to the unique solution $u$ to problem \eqref{e1_1}. By case 1 and case 3, we have
		\begin{align*}
			{\mathcal S}: &L^1(Q_{\tau,T}) \to L^1(Q_{\tau,T}), \\
			&L^p(Q_{\tau,T}) \to L^\infty(Q_{\tau,T}), \quad \text{for any } p>\frac{N+2\alpha}{2\alpha},
		\end{align*}
		is continuous.
		For $q \in (\frac{N+2\alpha}{2\alpha},+\infty)$, put $\theta = \frac{1}{q} \in (0,1)$ and $p=\frac{(q-1)(N+2\alpha)}{2\alpha q - N-2\alpha}$. 
By Riez-Thorin interpolation theorem, ${\mathcal S}: L^{\frac{N+2\alpha}{2\alpha}}(Q_{\tau,T}) \to L^q(Q_{\tau,T})$ is continuous and
		\begin{equation} \label{eq:q-supercritical} \| u \|_{L^q(Q_{\tau,T})} \leq C_{T-\tau,\mu}\| f \|_{L^{\frac{N+2\alpha}{2\alpha}}(Q_{\tau,T})}.
		\end{equation}
		For $q=\frac{N+2\alpha}{2\alpha}$, by \eqref{est:mild-1b}, we have 
		\begin{equation} \label{eq:q-critical} \| u \|_{L^{\frac{N+2\alpha}{2\alpha}}(Q_{\tau,T})} \leq C_{T-\tau,\mu}\| f \|_{L^{\frac{N+2\alpha}{2\alpha}}(Q_{\tau,T})}.
		\end{equation}
Next we assume that $f \equiv 0$ and let $u$ be the unique solution to problem \eqref{e1_1}. By \eqref{est:mild-1b},
		\begin{equation} \label{eq:Lq-estimate} \| u \|_{L^q(Q_{\tau,T})} \leq C_{T-\tau,\mu}\| u_\tau \|_{L^q(\R^N)}.
		\end{equation}
In the general case, by linearity, \eqref{eq:q-supercritical}, \eqref{eq:q-critical} and \eqref{eq:Lq-estimate}, we obtain, 
		$$ \| u \|_{L^q(Q_{\tau,T})} \leq C_{T-\tau,\mu}(\| u_\tau \|_{L^q(\R^N)} + \| f \|_{L^{\frac{N+2\alpha}{2\alpha}}(Q_{\tau,T})}),
		$$
		which implies the desired result for $p=\frac{N+2\alpha}{2\alpha}$.
		The proof is complete.
	\end{proof}
	
	Lemma \ref{lem:heat_unsigned} shows the $L^p-L^q$ regularization of the nonlocal heat operator for solutions whose signs can be arbitrary. If the solution is known to be non-negative, as in the case of RDS that we are considering, we can have the same regularization for sub-solutions. The proof is almost the same as that of Lemma \ref{lem:heat_unsigned} except that we utilize the non-negativity of solution to deal with sub-solutions, so we omit the details of the proof of the following lemma.
	\begin{lemma}\label{lem:heat_signed}
		Assume $\mu>0$, $p\in [1,\infty]$, $0 \leq \tau < T$ and $f\in \LQ{p}$. Let $u$ be a smooth {\normalfont non-negative function} satisfying
		\begin{equation*}
			\begin{cases}
				\partial_tu(x,t) + \mu(-\Delta)^\alpha u(x,t) \le f(x,t), &\text{in } Q_{\tau,T},\\
				u(x,\tau) = u_\tau(x), &x\in\R^N.
			\end{cases}
		\end{equation*}
		Then \eqref{est:Lp-Lq} holds.
	\end{lemma}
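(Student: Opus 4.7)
The plan is to reduce the sub-solution case to the solution case already handled in Lemma \ref{lem:heat_unsigned} by a short comparison argument that exploits the non-negativity of the fractional heat kernel $K_{\alpha,\mu}$, thereby bypassing the energy-based derivation entirely.

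First, I would use the smoothness of $u$ to set $g_u := \partial_t u + \mu(-\Delta)^\alpha u$, which is a pointwise-defined function satisfying $g_u \leq f$ on $Q_{\tau,T}$ by hypothesis. Letting $\{S_{\alpha,\mu}(t)\}_{t \geq 0}$ denote the semigroup generated by $\mu(-\Delta)^\alpha$, Duhamel's formula then yields
$$u(t) = S_{\alpha,\mu}(t-\tau)u_\tau + \int_\tau^t S_{\alpha,\mu}(t-s)\,g_u(s)\,ds.$$
As a comparison function, introduce the unique mild solution $w$ of the equation $\partial_t w + \mu(-\Delta)^\alpha w = f$ in $Q_{\tau,T}$ with $w(\cdot,\tau)=u_\tau$, which admits the representation
$$w(t) = S_{\alpha,\mu}(t-\tau)u_\tau + \int_\tau^t S_{\alpha,\mu}(t-s)\,f(s)\,ds.$$

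Next, I would subtract these two Duhamel formulas. Since the kernel $K_{\alpha,\mu}$ is non-negative by \eqref{selfsim}, the operator $S_{\alpha,\mu}(t)$ is order-preserving, and combining this with $f - g_u \geq 0$ gives
$$w(t) - u(t) = \int_\tau^t S_{\alpha,\mu}(t-s)\bigl(f(s)-g_u(s)\bigr)\,ds \geq 0.$$
Together with the hypothesis $u \geq 0$, this will yield the pointwise bound $0 \leq u \leq w$ throughout $Q_{\tau,T}$, whence $\|u\|_{L^q(Q_{\tau,T})} \leq \|w\|_{L^q(Q_{\tau,T})}$ for every $q \in [1,\infty]$. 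Applying Lemma \ref{lem:heat_unsigned} to $w$, which solves \eqref{e1_1} with the same initial datum $u_\tau$ and source $f$, will then deliver the asserted estimate \eqref{est:Lp-Lq}.

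There is essentially no serious obstacle to this plan: all the analytic difficulty is delegated to Lemma \ref{lem:heat_unsigned}, and the comparison step is elementary once one has the non-negativity of the heat kernel and a Duhamel representation in hand. The only minor technical point to justify is the validity of the Duhamel formula for $u$ at our regularity level, but this is immediate from the smoothness hypothesis on $u$ together with $f \in L^p(Q_{\tau,T})$. This also clarifies the author's remark that the proof is ``almost the same'' as that of Lemma \ref{lem:heat_unsigned} up to an invocation of non-negativity, justifying the omission of details.
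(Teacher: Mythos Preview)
Your argument is correct, but it is not the route the paper has in mind. The paper's comment that ``the proof is almost the same as that of Lemma~\ref{lem:heat_unsigned}'' signals that one is meant to rerun the energy estimates case by case: since $u\ge 0$, the test functions $T_k(u)$ and $|u|^{\ell-2}u=u^{\ell-1}$ used in Cases~1 and~2 are non-negative, so multiplying the \emph{inequality} $\partial_t u+\mu(-\Delta)^\alpha u\le f$ by them preserves the direction and yields the same chain of inequalities as before; Case~3 already proceeds via the Duhamel representation, where the sub-solution property and the positivity of the kernel give $u(t)\le S_{\alpha,\mu}(t-\tau)u_\tau+\int_\tau^t S_{\alpha,\mu}(t-s)f(s)\,ds$ and non-negativity of $u$ converts this into an $L^\infty$ bound; Case~4 is interpolation. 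Your comparison argument bypasses the energy computations entirely by constructing the actual solution $w$ with source $f$ and using kernel positivity to get $0\le u\le w$ globally, then invoking Lemma~\ref{lem:heat_unsigned} once for $w$. This is shorter and conceptually cleaner, at the mild cost of needing a Duhamel representation for $u$ itself (which the smoothness hypothesis grants). Your closing sentence slightly misreads the authors' intent---they are not alluding to a comparison principle but to the sign of the test functions---though your alternative proof is perfectly valid.
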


	\subsection{Duality methods}\label{subsec:duality}
	\begin{lemma}[Improved duality estimate]\label{lem:improved_duality}
		Let $A$ be a smooth function satisfying
		$\underline{a} \le A(x,t) \le \overline{a}$ for all $(x,t)\in Q_{\tau,T}$,
		and let $u$ be a non-negative smooth function such that $u, (-\Delta)^\alpha (Au) \in L^q(Q_{\tau,T})$ for any $q \in (1,\infty)$ and 
		\begin{equation*}
			\begin{cases}
				\pa_t u + (-\Delta)^{\alpha}(Au) \leq \Psi, &\text{in } Q_{\tau,T},\\
				u(x,\tau) = u_\tau(x), &x\in\R^N,
			\end{cases}
		\end{equation*}
		where $\|\Psi\|_{\LQ{1}} \le C_{T-\tau}$, that is the constant depends only on $T-\tau$. Then there exist $\eps_0>0$ and $\delta\in(0,1)$ such that
		\begin{equation*} 
			\|u\|_{\LQ{2+\eps_0}} \le C_{T-\tau}(\|u_\tau\|_{\LO{2+\eps_0}})\bra{1+\|\Psi\|_{\LQ{2+\eps_0}}^{1-\delta}},
		\end{equation*}
		where $C_{T-\tau}(\|u_\tau\|_{\LO{2+\eps_0}})$ depends only $\|u_\tau\|_{\LO{2+\eps_0}}$, $\underline{a}$, $\overline{a}$, and $T-\tau$, with the dependence on $T-\tau$ being at most polynomial.
	\end{lemma}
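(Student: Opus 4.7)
The strategy is the classical duality method of Pierre, combined with the improvement of Cañizo--Desvillettes--Fellner that exploits the fact that the $L^2$--maximal regularity constant $C_{2,N}$ is no larger than $1$ (Lemma \ref{Lp-regularity}).

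First I would fix $0 \le \theta \in C_c^\infty(Q_{\tau,T})$ and introduce the adjoint problem
\[
-\pa_t \varphi + A(-\Delta)^{\alpha}\varphi = \theta \text{ in } Q_{\tau,T}, \qquad \varphi(\cdot,T) = 0.
\]
Note that the adjoint of $L u = (-\Delta)^\alpha(Au)$ is $L^*\varphi = A(-\Delta)^\alpha \varphi$, so this is the correct adjoint. Setting $\tilde\varphi(x,t) = \varphi(x,T+\tau-t)$ converts it to a forward problem with zero initial data, and a Stroock--Varopoulos / Kato--type pairing applied to $\tilde\varphi^{-}$ yields $\varphi \ge 0$ since $\theta \ge 0$ and $A \ge \underline a > 0$. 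Multiplying the differential inequality for $u$ by $\varphi$, integrating over $Q_{\tau,T}$, and integrating by parts then gives the fundamental duality bound
\[
\iint_{Q_{\tau,T}} u\,\theta \, dx\,dt \;\le\; \iint_{Q_{\tau,T}} \Psi\,\varphi \, dx\,dt \,+\, \int_{\R^N} u_\tau\, \varphi(\cdot,\tau)\, dx.
\]

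The heart of the proof is then the control of $\varphi$ in $L^{q'}$--type norms, where $q'=(2+\eps_0)/(1+\eps_0)$. Setting $\bar a := (\underline{a}+\overline{a})/2$, I would rewrite the time-reversed adjoint equation as
\[
\pa_t \tilde\varphi + \bar a (-\Delta)^{\alpha}\tilde\varphi = \tilde\theta + (\bar a - \tilde A)(-\Delta)^{\alpha}\tilde\varphi,
\]
and apply the $L^p$--maximal regularity (Lemma \ref{Lp-regularity}) to obtain
\[
\|(-\Delta)^{\alpha}\tilde\varphi\|_{L^{p}(Q_{\tau,T})} \le \frac{C_{p,N}}{\bar a}\Bigl(\|\tilde\theta\|_{L^p(Q_{\tau,T})} + \tfrac{\overline a-\underline a}{2}\|(-\Delta)^{\alpha}\tilde\varphi\|_{L^p(Q_{\tau,T})}\Bigr).
\]
Because $C_{2,N}\le 1$, the factor $\frac{C_{2,N}(\overline a-\underline a)}{2\bar a}\le \frac{\overline a-\underline a}{\overline a+\underline a}<1$, so absorption works at $p=2$. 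By Riesz--Thorin interpolation the map $p\mapsto C_{p,N}$ is continuous at $p=2$, hence there is some $\eps_0>0$ such that the same absorption is possible for $p=q'=(2+\eps_0)/(1+\eps_0)$, producing
\[
\|(-\Delta)^{\alpha}\varphi\|_{L^{q'}(Q_{\tau,T})} \le C\,\|\theta\|_{L^{q'}(Q_{\tau,T})},
\]
with $C$ depending only on $\underline a,\overline a,N,\eps_0$. Feeding this back into the equation $\pa_t \tilde\varphi = \tilde\theta - \tilde A(-\Delta)^{\alpha}\tilde\varphi$ and invoking the $L^p$--$L^q$ heat regularization (Lemma \ref{lem:heat_unsigned}) yields companion bounds $\|\varphi\|_{L^{r}(Q_{\tau,T})} + \|\varphi(\cdot,\tau)\|_{L^{2+\eps_0}(\R^N)} \lesssim \|\theta\|_{L^{q'}(Q_{\tau,T})}$ for a suitable $r=r(\eps_0)>q'$, with constants at most polynomial in $T-\tau$.

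To handle the two terms on the right-hand side of the duality inequality, the initial-data term is immediate via H\"older, $\int u_\tau\varphi(\cdot,\tau)\le \|u_\tau\|_{L^{2+\eps_0}}\|\varphi(\cdot,\tau)\|_{L^{q'}} \le C\|u_\tau\|_{L^{2+\eps_0}}\|\theta\|_{L^{q'}}$. For $\iint \Psi\varphi$, I would use that $\|\Psi\|_{L^1(Q_{\tau,T})}\le C_{T-\tau}$ is already given, together with Lebesgue interpolation
$\|\Psi\|_{L^{r'}}\le \|\Psi\|_{L^1}^{\delta}\,\|\Psi\|_{L^{2+\eps_0}}^{1-\delta}$
for the appropriate $r'$ dual to the space in which $\varphi$ is controlled. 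Hence
\[
\iint \Psi\varphi \le \|\Psi\|_{L^{r'}}\|\varphi\|_{L^r} \le C_{T-\tau}\bigl(1+\|\Psi\|_{L^{2+\eps_0}}^{1-\delta}\bigr)\|\theta\|_{L^{q'}}.
\]
Taking the supremum over $\|\theta\|_{L^{q'}}\le 1$ (and using density) gives the claimed estimate.

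The main obstacle is the tightness in Step~2: the strict inequality $C_{2,N}\tfrac{\overline a-\underline a}{2\bar a}<1$ is exactly what permits a small window $\eps_0>0$ of admissible exponents beyond $2$, and the entire ``improved'' part of the duality estimate relies on the continuity of $p\mapsto C_{p,N}$ at $p=2$. A secondary technical point is tracking the dependence on $T-\tau$ through the $L^p$--$L^q$ regularization so as to obtain the stated polynomial dependence.
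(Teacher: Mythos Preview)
Your approach is correct and follows the same improved-duality strategy as the paper: $L^p$-maximal regularity, the bound $C_{2,N}\le1$, Riesz--Thorin to push the exponent slightly past $2$, and the heat regularization of Lemma~\ref{lem:heat_unsigned} to gain integrability on $\varphi$ so that the $\Psi$-term can be interpolated between $L^1$ and $L^{2+\eps_0}$.

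The one organizational difference is where the variable coefficient $A$ is absorbed. The paper solves the \emph{constant-coefficient} backward problem $\partial_t\psi-\omega(-\Delta)^\alpha\psi=-\theta$ with $\omega=(\underline a+\overline a)/2$, carries the cross term $\int(\omega-A)\,u\,(-\Delta)^\alpha\psi$ into the primal pairing, and absorbs $\|u\|_{L^{p_*}}$ at the very end via the condition $\frac{\overline a-\underline a}{\overline a+\underline a}C_{p_*',N}<1$. You instead take the variable-coefficient dual $-\partial_t\varphi+A(-\Delta)^\alpha\varphi=\theta$, so the primal pairing is clean, and perform exactly the same perturbation-and-absorption step inside the dual when estimating $\|(-\Delta)^\alpha\varphi\|_{L^{q'}}$. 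The two are equivalent. The paper's version has the minor practical advantage that the dual problem is constant-coefficient, so existence, maximal regularity, and Lemma~\ref{lem:heat_unsigned} apply to $\psi$ directly, whereas in your version one must first argue (using the smoothness of $A$ and $\theta$) that $(-\Delta)^\alpha\tilde\varphi\in L^{q'}$ a~priori before the absorption inequality becomes a genuine bound, and then apply Lemma~\ref{lem:heat_unsigned} to the rewritten equation with $(\bar a-\tilde A)(-\Delta)^\alpha\tilde\varphi$ moved to the right. A small slip: in your companion bounds you wrote $\|\varphi(\cdot,\tau)\|_{L^{2+\eps_0}}$, but the exponent you actually need and correctly use in the H\"older step is $q'$.
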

	\begin{proof}
		Recall from Lemma \ref{Lp-regularity} that $C_{p,N}$ is the smallest constant such that \eqref{e3} holds where $u$ solves \eqref{e1} and $C_{2,N}\le 1$. We will show that there exists $p_*>2$ such that
		\begin{equation} \label{f3} 
			\frac{\overline a - \underline a}{\overline a + \overline a}C_{p_*', N} < 1
		\end{equation}
		where $p_*' = p_*/(p_*-1)$. Let $\eta > 0$ be small and $2_\eta$ be defined as
		\begin{equation*} 
			\frac{1}{2_\eta} = \frac 12\bra{\frac{1}{2} + \frac{1}{2-\eta}} \quad \text{or equivalently}\quad 2_\eta = 2 - \frac{2\eta}{4-\eta}.
		\end{equation*}
		By applying the Riesz-Thorin interpolation theorem (see e.g. \cite[Chapter 2]{lunardi2018interpolation}), we have 
		$C_{2_\eta,N} \le C_{2,N}^{1/2}C_{2-\eta,N}^{1/2} \le C_{2-\eta,N}^{1/2}$. 
		Therefore
		$
			C_{2,N}^-:= \liminf_{\eta \to 0}C_{2_\eta,N} \le \lim_{\eta \to 0}C_{2-\eta,N}^{1/2} = (C_{2,N}^-)^{1/2},
		$
		and thus $C_{2,N}^{-} \le 1$. {Therefore, we can choose $p_*$ sufficiently close to $2$ such that \eqref{f3} holds.}
		
Fix this $p_*$ and let $0\le \theta \in \LQ{p_*'}$ be arbitrary. Consider $\psi$ to be the solution of
		\begin{equation*}
			\partial_t \psi - \omega(-\Delta)^{\alpha}\psi = -\theta \, \text{ in } Q_{\tau,T},\quad 
				\psi(x,T) = 0,  \, x\in \R^N
		\end{equation*}
		with $\omega = (\overline a + \underline a)/2$. By Lemma \ref{Lp-regularity}, $\psi \in L^{p_*'}((\tau,T);H^{2\alpha,p_*'}(\R^N))$, we have the estimate
		\begin{equation*} 
			\|(-\Delta)^\alpha \psi\|_{\LQ{p_*'}} \leq \frac{C_{p_*',N}}{\omega}\|\theta\|_{\LQ{p_*'}},
		\end{equation*}
		hence
		$\|\pa_t\psi\|_{\LQ{p_*'}} \le (C_{p_*',N}+1)\|\theta\|_{\LQ{p_*'}}$. 
		From Lemma \ref{lem:heat_unsigned}, there is some $q>p_*'$ such that $\|\psi\|_{\LQ{q}} \le C_{T-\tau}\|\theta\|_{\LQ{p_*'}}$.
		From this and Minskowski's inequality for integrals, we have
		\begin{align*}
			\|\psi(\tau)\|_{\LO{p_*'}}^{p_*'} = \int_{\R^N}\abs{\int_\tau^T\pa_t\psi(s)ds}^{p_*'}dx
			&\le (T-\tau)^{\frac{1}{p_*-1}}\|\pa_t\psi\|_{\LQ{p_*'}}^{p_*'}\\ &\le (T-\tau)^{\frac{1}{p_*-1}}(C_{p_*',N}+1)^{p_*'}\|\theta\|_{\LQ{p_*'}}^{p_*'}.
		\end{align*}
		By using the regularity $\psi, (-\Delta)^\alpha \psi \in L^{p_*'}(Q_{\tau,T})$ and $u, (-\Delta)^\alpha (Au) \in L^{p_*}(Q_{\tau,T})$ (which allows to take the integration by parts), we have
		\begin{align*}
			&\intQT u\theta dxdt = \intQT u(-\pa_t \psi + \omega(-\Delta)^{\alpha}\psi)dxdt\\
			&= \intR u_\tau {\psi(\tau)}dx + \intQT \psi (\pa_t u + (-\Delta)^{\alpha}(Au))dxdt + \intQT (\omega - A)u(-\Delta)^{\alpha}\psi dxdt\\
			&\le \|u_\tau\|_{\LO{p_*}}\|\psi(\tau)\|_{\LO{p_*'}} + \intQT \psi \Psi dxdt +  \|\omega -A\|_{\LQ{\infty}}\|u\|_{\LQ{p_*}}\|(-\Delta)^{\alpha}\psi\|_{\LQ{p_*'}}\\
			&\le \bra{\|u_\tau\|_{\LO{p_*}}(T-\tau)^{\frac{1}{p_*}}(C_{p_*',N}+1) + \frac{\overline a - \underline a}{\overline a + \underline a}C_{p_*',N}\|u\|_{\LQ{p_*}}}\|\theta\|_{\LQ{p_*'}} + \intQT \psi \Psi dxdt.
		\end{align*}
		From $q>p_*'$ it follows that $q' = \frac{q}{q-1} < p_*$. Therefore, we can estimate for some $\delta\in (0,1)$
		\begin{equation*} 
			\intQT \psi \Psi dxdt \le \|\psi\|_{\LQ{q}}\|\Psi\|_{\LQ{q'}} \le C_{T-\tau}\|\theta\|_{\LQ{p_*'}}\|\Psi\|_{\LQ{p_*}}^{1-\delta}\|\Psi\|_{\LQ{1}}^{\delta}.
		\end{equation*}
		Thanks to the assumption $\|\Psi\|_{\LQ{1}} \le C_{T-\tau}$, we obtain by duality the desired estimate
		\begin{equation*} 
			\|u\|_{\LQ{p_*}} \le \bra{1-\frac{\overline a - \underline a}{\overline a + \overline a}C_{p_*',N}}^{-1}\bra{(C_{p_*',N}+1)(T-\tau)^{\frac{1}{p_*}}\|u_\tau\|_{\LO{p_*}} + C_{T-\tau}\|\Psi\|_{\LQ{p_*}}^{1-\delta}}.
		\end{equation*}
		The proof is complete.
	\end{proof}
	
	\begin{lemma}[Propagation of regularity]\label{lem:propagation}
		Let $u, v$ be non-negative functions such that $u$, $(-\Delta)^\alpha u$,  $v$, $(-\Delta)^\alpha (kv) \in L^r(Q_{\tau,T})$ for any $r \in (1,\infty)$ and 
		\begin{equation*} 
			\begin{cases}
				\pa_t(u+v)+(-\Delta)^{\alpha}(du + k(x,t)v) \le g &\text{in } Q_{\tau,T},\\
				u(x,\tau) = u_\tau(x), v(x,\tau) = v_\tau(x), &x\in\R^N,
			\end{cases}
		\end{equation*}
		where $d>0$ is a constant, the function $k: Q_{\tau,T} \to \R_+$ satisfies $\|k\|_{\LQ{\infty}} \le K$. Then we have, for some $\delta\in (0,1)$,
		\begin{equation*} 
			\|u\|_{\LQ{q}} \le C_{T-\tau}(\|u_\tau+v_\tau\|_{\LO{q}} +\|v\|_{\LQ{q}} + \|g\|_{\LQ{p}})
		\end{equation*}
		provided $p\ge 1$ and $p\le q \le \widehat{p}$ with $\widehat{p}$ defined in \eqref{est:q},
		where the constant $C_{T-\tau}$ depends only on $T-\tau$, $d$, $K$, and $N$.
	\end{lemma}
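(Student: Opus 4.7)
The plan is to prove this via a duality argument, testing the differential inequality against a non-negative solution of the backward fractional heat equation. Concretely, fix a non-negative $\theta \in \LQ{q'}$ with $\|\theta\|_{\LQ{q'}} = 1$ and let $\psi$ solve
\begin{equation*}
-\pa_t \psi + d(-\Delta)^\alpha \psi = \theta \quad \text{in } Q_{\tau,T}, \quad \psi(\cdot,T)=0.
\end{equation*}
Reversing time turns this into a standard forward fractional heat problem with zero initial data and non-negative source, so $\psi \geq 0$ by positivity of the semigroup, and $\psi$ enjoys the regularity needed to justify the manipulations below.

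The next step is to multiply the differential inequality for $u+v$ by $\psi$, integrate over $Q_{\tau,T}$, use self-adjointness of $(-\Delta)^{\alpha}$, and integrate by parts in time (the boundary contribution at $T$ vanishes since $\psi(T) = 0$). Substituting $-\pa_t \psi = \theta - d(-\Delta)^\alpha \psi$ and simplifying the diffusion cross-terms via $-d(u+v)+(du+kv) = (k-d)v$, I arrive at
\begin{equation*}
\intQT (u+v)\theta \, dxdt \leq \intR (u_\tau+v_\tau)\psi(\tau)\, dx + \intQT g\psi \, dxdt + \intQT (d-k)v(-\Delta)^\alpha \psi \, dxdt.
\end{equation*}
Each right-hand side term is then controlled by H\"older's inequality together with three ingredients for $\psi$: the $L^p$-maximal regularity (Lemma \ref{Lp-regularity}) gives $\|(-\Delta)^\alpha \psi\|_{\LQ{q'}} \lesssim \|\theta\|_{\LQ{q'}}$; the heat smoothing (Lemma \ref{lem:heat_unsigned}) applied to the time-reversed problem gives $\|\psi\|_{\LQ{p'}} \lesssim C_{T-\tau}\|\theta\|_{\LQ{q'}}$; and writing $\psi(\tau) = \int_\tau^T (\theta - d(-\Delta)^\alpha\psi)\,ds$ together with Minkowski's inequality for integrals yields $\|\psi(\tau)\|_{\LO{q'}} \lesssim (T-\tau)^{1/q}\|\theta\|_{\LQ{q'}}$. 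The coefficient $(d-k)$ is pointwise bounded by $\max\{d,K\}$. Taking the supremum over admissible $\theta$ produces $\|u+v\|_{\LQ{q}} \lesssim C_{T-\tau}(\|u_\tau+v_\tau\|_{\LO{q}} + \|v\|_{\LQ{q}} + \|g\|_{\LQ{p}})$, and the conclusion follows at once from the pointwise inequality $\|u\|_{\LQ{q}} \leq \|u+v\|_{\LQ{q}}$, valid since both functions are non-negative.

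The main technical point, and the only nontrivial obstacle, is matching the admissible range $p \leq q \leq \widehat{p}$ in the hypothesis with the range required for the heat regularization of the adjoint problem. Specifically, I need $\psi \in \LQ{p'}$ given $\theta \in \LQ{q'}$, which by Lemma \ref{lem:heat_unsigned} requires $q' \leq p' \leq \widehat{q'}$. A direct computation from the formula defining $\widehat{\cdot}$ in \eqref{est:q} shows that the operation $p \mapsto \widehat{p}$ is compatible with H\"older conjugation in the precise sense that $q = \widehat{p}$ implies $\widehat{q'} = p'$, so $q \leq \widehat{p}$ is equivalent to $p' \leq \widehat{q'}$, as required. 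The integration by parts is rigorously justified by the $L^r$-regularity of $u,v,(-\Delta)^{\alpha}u,(-\Delta)^{\alpha}(kv)$ for all $r\in(1,\infty)$ assumed in the statement, and no further regularity on $k$ is needed since $k$ enters only through its pointwise bound.
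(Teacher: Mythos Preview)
Your proposal is correct and follows essentially the same duality argument as the paper: test against a non-negative solution $\psi$ of the backward fractional heat equation with source $\theta$, then invoke $L^p$-maximal regularity (Lemma \ref{Lp-regularity}) for $\|(-\Delta)^\alpha\psi\|_{\LQ{q'}}$, the heat regularization (Lemma \ref{lem:heat_unsigned}) for $\|\psi\|_{\LQ{p'}}$, and Minkowski for $\|\psi(\tau)\|_{\LO{q'}}$, with the exponent check $p\le q\le\widehat{p}\Leftrightarrow q'\le p'\le\widehat{q'}$. The only organizational difference is that the paper pairs $\psi$ directly with $u$ and then uses the inequality (with $\psi\ge 0$) to bring in the $v$-terms before a second integration by parts, whereas you pair $\psi$ with $u+v$ in one stroke and extract $\|u\|_{\LQ{q}}\le\|u+v\|_{\LQ{q}}$ at the end via non-negativity; both routes yield the same bound.
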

	\begin{proof}
		For $0\le \theta \in \LQ{q'}$ arbitrary, let $\psi$ be the solution to the equation
		\begin{equation*}
				\pa_t\psi - d(-\Delta)^{\alpha}\psi = -\theta \, \text{ in } Q_{\tau,T},\\
				\psi(x,T) = 0, \, x\in\R^N.
		\end{equation*}
		Similarly to Lemma \ref{lem:improved_duality}
		\begin{equation*} 
			\|(-\Delta)^{\alpha}\psi\|_{\LQ{q'}} \le C_{q',N}d^{-1}\|\theta\|_{\LQ{q'}}, \quad
			\|\pa_t\psi\|_{\LQ{q'}} \le (C_{q',N}+1)\|\theta\|_{\LQ{q'}},
		\end{equation*}
		\begin{equation*}
			\|\psi(\tau)\|_{\LO{q'}}^{q'} \le (T-\tau)^{\frac{1}{q-1}}(C_{q',N}+1)^{q'}\|\theta\|_{\LQ{q'}}^{q'}.
		\end{equation*}
		Using the heat regularization in Lemma \ref{lem:heat_unsigned} to the equation of $\psi$, we have
		$\|\psi\|_{\LQ{\gamma}} \le C_{T-\tau,d}\|\theta\|_{\LQ{q'}}$  \quad for all $\gamma \le \widehat{q'}$,
		where $\widehat{q'}$ defined in \eqref{est:q} with $q'$ in place of $p$. It follows from $p \le q \le \widehat{p}$ that $q' \le p' \le \widehat{q'}$. Therefore, we can choose $\gamma = p'$ and therefore obtain
		$\|\psi\|_{\LQ{p'}} \le C_{T-\tau,d}\|\theta\|_{\LQ{q'}}$.
		Now using the above estimates, together with the assumptions on $u,v$ (which ensures the integration by parts), we can estimate
		\begin{align*}
			\intQT u\theta dxdt &= \intQT u(-\pa_t\psi + d(-\Delta)^\alpha \psi)dxdt\\
			&=\intR u_\tau \psi(\tau)dx + \intQT \psi(\pa_t u + d(-\Delta)^{\alpha}u)dxdt\\
			&\le \intR u_\tau \psi(\tau)dx + \intQT \psi(-\pa_t v -(-\Delta)^{\alpha}(k(x,t)v) + g)dxdt\\
			&= \intR(u_\tau+v_\tau)\psi(\tau)dx + \intQT [v(\pa_t \psi - k(x,t)(-\Delta)^\alpha \psi)+ \psi g] dxdt\\
			&\le \|u_\tau +v_\tau \|_{\LO{q}}\|\psi(\tau)\|_{\LQ{q'}} +  \|\psi\|_{\LQ{p'}}\|g\|_{\LQ{p}}\\
			&\qquad +\|v\|_{\LQ{q}}\bra{\|\pa_t\psi\|_{\LQ{q'}}+ K\|(-\Delta)^\alpha \psi\|_{\LQ{q'}}}\\
			&\le C_{T-\tau}(K,d)\bra{\|u_\tau +v_\tau \|_{\LO{q}} + \|v\|_{\LQ{q}} + \|g\|_{\LQ{p}}}\|\theta\|_{\LQ{q'}}.
		\end{align*}
		By duality, we obtain the desired estimate for $u$ and finish the proof of this lemma.
	\end{proof}
	
	\subsection{Global existence and boundedness}\label{subsec:bootstrap}

	\begin{proof}[\textbf{Proof of Theorem \ref{thm:ISC}}]
		We prove first the global existence. Let $0<T<T_*$ arbitrary. By summing the equations of \eqref{sys} and using the mass dissipation, we obtain
		\begin{equation*} 
			\pa_t\bra{\sum_{i=1}^mu_i} + (-\Delta)^{\alpha}\bra{\sum_{i=1}^md_iu_i} \le 0.
		\end{equation*}
		Denote $Z := \sum_{i=1}^mu_i$ and  $W:= \bra{\sum_{i=1}^{m}d_iu_i}\bra{\sum_{i=1}^mu_i}^{-1}$.
		Then we have
		\begin{equation*} 
			\min_{i=1,\ldots, m}\{d_i\} \le W(x,t) \le \max_{i=1,\ldots, m}\{d_i\}, \quad \forall (x,t)\in Q_T
		\end{equation*}
		and the relation
		\begin{equation*} 
			\pa_t Z + (-\Delta)^{\alpha}(WZ) \le 0.
		\end{equation*}
		From the definition of $Z$ and the regularity of the local solution in Proposition \ref{prop:local-existence}, we deduce that $Z, (-\Delta)^\alpha (WZ) \in L^{q}(Q_T)$ for any $1<q<\infty$. Thanks to Lemma \ref{lem:improved_duality} with $\Psi=0$, there exists some $p_0>2$ such that $\|Z\|_{L^{p_0}(Q_T)} \le C(T,\|Z_0\|_{\LO{p_0}})$,
		where $Z_0 = \sum_{i=1}^mu_{i,0}$. Thanks to the non-negativity of $u_i$, it follows that
		$\|u_i\|_{L^{p_0}(Q_T)} \le C(T,\|Z_0\|_{\LO{p_0}})$ for all $i=1,\ldots, m.$
		Put $f_*:= \Phi+C|\ub|^{{\rho}}$, we have 
		$\|f_*\|_{L^{\frac{p_0}{{\rho}}}(Q_T)}  \leq C(T, \|\Phi\|_{L^{\frac{p_0}{{\rho}}}(Q_T)},\|Z_0\|_{\LO{p_0}})$.
		From the intermediate sum condition \eqref{ISC}, $\pa_t u_1 + d_1(-\Delta)^{\alpha}u_1 = f_1(\ub) \le f_*$, and therefore, thanks to the heat regularization in Lemma \ref{lem:heat_signed}, 
		\begin{align*} 
			\|u_1\|_{\LQT{p_1}} &\leq C_T\bra{\|Z_0\|_{\LO{\frac{p_0}{{\rho}}}\cap \LO{p_1}} + \|f_*\|_{\LQT{\frac{p_0}{{\rho}}}}} \leq C(T, \|\Phi\|_{L^{\frac{p_0}{{\rho}}}(Q_T)},\|Z_0\|_{\LO{1}\cap \LO{p_1}}),
		\end{align*}
		where 
		\begin{equation}\label{p1}
			p_1 = 
			\left\{
			\begin{aligned}
				&\frac{(N+2\alpha)p_0}{{\rho}(N+2\alpha) - 2\alpha p_0}, &&\;\text{ if }\; \frac{p_0}{{\rho}} < \frac{N+2\alpha}{2\alpha},	\\
				&\text{ arbitrary in } \Big[\frac{p_0}{{\rho}}, +\infty\Big) , &&\;\text{ if }\; \frac{p_0}{{\rho}} \geq \frac{N+2\alpha}{2\alpha}.
			\end{aligned}
			\right.
		\end{equation}
		We now show that for $i\in \{1,\ldots, m-1\}$ if
		\begin{equation} \label{g1}
			\|u_j\|_{\LQT{p_0}} \le C(T,\|Z_0\|_{\LO{1}\cap\LO{p_1}}), \quad \forall j=1,2,\ldots, m,
		\end{equation}
		and
		\begin{equation}\label{g2} 
			\|u_j\|_{\LQT{p_1}} \le C(T, \|\Phi\|_{L^{\frac{p_0}{{\rho}}}(Q_T)}, \|Z_0\|_{\LO{1}\cap \LO{p_1}}), \quad \forall j=1,2,\ldots, i, 
		\end{equation}
		then
		\begin{equation*} 
			\|u_{i+1}\|_{\LQT{p_1}} \le  C(T, \|\Phi\|_{L^{\frac{p_0}{{\rho}}}(Q_T)}, \|Z_0\|_{\LO{1}\cap \LO{p_1}}).
		\end{equation*}
		From the intermediate sum condition \eqref{ISC}, by setting
		\begin{equation*} 
			v(x,t) := \sum_{j=1}^{i}a_{ij}u_j(x,t), \quad k(x,t):= \bra{\sum_{j=1}^id_ja_{ij}u_j}\bra{\sum_{j=1}^ia_{ij}u_j}^{-1}
		\end{equation*}
		we obtain $\pa_t(u_{i+1}+v) + (-\Delta)^{\alpha}(u_{i+1} + k(x,t)v) \le f_*$. 
		Note that $\|k\|_{\LQT{\infty}} \le \max_{j=1,\ldots, i}\{d_j\}$, and from \eqref{g1}, \eqref{g2}
		\begin{equation*} 
			\|f_*\|_{\LQT{\frac{p_0}{{\rho}}}} + \|v\|_{\LQT{p_1}} \le C(T, \|\Phi\|_{L^{\frac{p_0}{{\rho}}}(Q_T)},\|Z_0\|_{\LO{1}\cap\LO{p_1}}).
		\end{equation*}
		We note that $u_{i+1},v, (-\Delta)^\alpha u_{i+1}, (-\Delta)^\alpha (kv) \in L^r(Q_T)$ for any $r \in (1,\infty)$ due to the regularity of $u_i$, $i=1,\ldots,m$. Therefore, we can apply Lemma \ref{lem:propagation} with $q = p_1$, $p = p_0/{\rho}$, and {$g = f_*$} to get
		\begin{equation*}
			\begin{aligned} 
				\|u_{i+1}\|_{\LQT{p_1}} &\le C_T\bra{\|u_0+v_0\|_{\LQT{p_1}}+ \|v\|_{\LQT{p_1}} + \|f_*\|_{\LQT{\frac{p_0}{{\rho}}}}}\\
				&\le C(T, \|\Phi\|_{L^{\frac{p_0}{{\rho}}}(Q_T)},\|Z_0\|_{\LO{1}\cap\LO{p_1}})
			\end{aligned}
		\end{equation*}
		which is the desired claim.
		
		\medskip
		By repeating procedure, we obtain a sequence $p_0, p_1, \ldots$ such that 
		\begin{equation}\label{pn}
			p_{n+1} = \frac{(N+2\alpha)p_n}{{\rho}(N+2\alpha)-2\alpha p_n} \; \text{ as long as } \; p_n < \frac{N+2\alpha}{2\alpha {\rho}}
		\end{equation}
		and
		\begin{equation*} 
			\|u_i\|_{\LQT{p_{n}}} \le C(T, \|\Phi\|_{L^{\frac{p_0}{{\rho}}}(Q_T)},\|Z_0\|_{\LO{1}\cap \LO{\infty}}), \quad \forall i=1,\ldots, m.
		\end{equation*}
		We claim that there exists $n_0\ge 2$ such that $p_{n_0} \ge (N+2\alpha)/(2\alpha {\rho})$. Indeed, assume otherwise that $p_{n}<(N+2\alpha)/(2\alpha{\rho})$ for all $n\ge 1$. From the definition of $p_{n}$, we have
		$\frac{p_{n+1}}{p_n} = \frac{N+2\alpha}{{\rho}(N+2\alpha)-2\alpha p_n}$.
		Since ${\rho} \le 1 + 4\alpha/(N+2\alpha)$ and $p_0 > 2$, it holds
		$\frac{N+2\alpha}{{\rho}(N+2\alpha) - 2\alpha p_0} > 1$.
		Thus $\{p_n\}_{n\ge 1}$ is strictly increasing with $\frac{p_{n+1}}{p_n} > \frac{N+2\alpha}{{\rho}(N+2\alpha) - 2\alpha p_0} > 1$.
		This implies that $\lim_{n\to\infty}p_n =+\infty$, which is a contradiction. Now, with $p_{n_0} \ge (N+2\alpha)/(2\alpha{\rho})$, it yields $\|f_*\|_{\LQT{\frac{N+2\alpha}{2\alpha}}} \le C(T, \|\Phi\|_{L^{\frac{p_0}{{\rho}}}(Q_T)},\|Z_0\|_{\LO{1}\cap\LO{\infty}})$. This, in combination with the heat regularization in Lemma \ref{lem:heat_signed}, gives
		\begin{equation*}
			\|u_1\|_{\LQT{s}} \le C(T, \|\Phi\|_{L^{\frac{p_0}{{\rho}}}(Q_T)},\|Z_0\|_{\LO{1}\cap \LO{\infty}}), \quad \forall s\in [1,\infty).
		\end{equation*}
		Using Lemma \ref{lem:propagation}, 
		$
			\|u_j\|_{\LQT{s}} \leq C(T, \|\Phi\|_{L^{\frac{p_0}{{\rho}}}(Q_T)},\|Z_0\|_{\LO{1}\cap \LO{\infty}}), \quad \forall s\in [1,\infty), \quad  \forall j=2,\ldots, m.
		$
		Finally, using the polynomial bounds \eqref{Pol} and the heat regularization in Lemma \ref{lem:heat_signed},
		\begin{equation*} 
			\|u_i\|_{\LQT{\infty}} \le C(T, \|\Phi\|_{L^{\frac{p_0}{{\rho}}}(Q_T)},\|Z_0\|_{\LO{1}\cap\LO{\infty}}), \quad \forall i=1,\ldots,m.
		\end{equation*}
		This completes the proof of Theorem \ref{thm:ISC} for $N\ge 2$.
		
		\medskip
		We turn to show the uniform-in-time bound of solutions. It follows from the assumption of $\Phi$ that $\|\Phi\|_{L^1(Q_{\tau,\tau+2})\cap L^{\infty}(Q_{\tau,\tau+2})} \le C$ for all $\tau \ge 0$. Fix $\tau \ge 0$ arbitrary and let $\varphi_\tau: \R \to [0,1]$ be a smooth function such that $\varphi_\tau|_{(-\infty,\tau]} = 0$, $\varphi_{\tau}|_{[\tau+1,\infty)} = 1$, $0\le \varphi_\tau' \le 2$ for $s\in [\tau,\tau+1]$. Denote by $v_i = \varphi_\tau u_i$, it follows that
		\begin{equation*}
			\begin{cases}
				\partial_t v_i + d_i(-\Delta)^{\alpha}v_i = \varphi_\tau' u_i + \varphi_\tau f_i(x,t,\ub), &\text{in } Q_{\tau,\tau+2},\\
				v_i(x,\tau) = 0, &x\in\R^N.
			\end{cases}
		\end{equation*}
		Summing up the equations, denote by $\tilde Z = \sum_{i=1}^m v_i$, $\tilde W = \bra{\sum_{i=1}^md_iv_i}\bra{\sum_{i=1}^mv_i}^{-1}$ and $\tilde \Psi = \sum_{i=1}^m\varphi_\tau' u_i$, we get
		\begin{equation*} 
			\begin{cases}
				\partial_t\tilde Z + (-\Delta)^{\alpha}(\tilde W \tilde Z) \leq \tilde \Psi, &\text{in } Q_{\tau,\tau+2},\\
				Z(x,0) = 0, &x\in\R^N.
			\end{cases}
		\end{equation*}
By the $\LO{1}$-bound of $u_i$ in \eqref{est:ui_L1}, we obtain
		$\| \tilde \Psi \|_{L^1(Q_{\tau,\tau+2})} = \sum_{i=1}^m d_i \| u_i \|_{L^1(Q_{\tau,\tau+2})} \leq C$,
where $C$ depends only on $d_i$ and $\sum_{i=1}^m \| u_{0,i}\|_{L^1(\R^N)}$. Using the above estimate and the fact that $\min \{d_i\} \le \tilde W \le \max \{d_i\}$, we can apply Lemma \ref{lem:improved_duality} with $T = \tau+2$ and $\Psi = \tilde \Psi$ to get for some $p_0>2$
		\begin{align*} 
			\sum_{i=1}^m\|u_i\|_{L^{p_0}(Q_{\tau+1,\tau+2})}\leq C\|\tilde{Z}\|_{L^{p_0}(Q_{\tau,\tau+2})} \le C\bra{1+\sum_{i=1}^m\|u_i\|_{L^{p_0}(Q_{\tau,\tau+1})}^{1-\delta}}.
		\end{align*}
		Consider $\tau \in \N$ such that ${\sum_{i=1}^m}\|u_i\|_{L^{p_0}(Q_{\tau,\tau+1})} \leq {\sum_{i=1}^m} \|u_i\|_{L^{p_0}(Q_{\tau+1,\tau+2})}$. Then we can use Young's inequality to get
		$\sum_{i=1}^m\|u_i\|_{L^{p_0}(Q_{\tau+1,\tau+2})} \le C$, 
		where \textit{$C$ is independent of $\tau$}. Thus, by Lemma \ref{lem:sequence}, 
		\begin{equation*} 
			{\sum_{i=1}^m} \|u_i\|_{L^{p_0}(Q_{\tau,\tau+1})} \le \max \Big\{C, {\sum_{i=1}^m}\|u_i\|_{L^{p_0}(Q_{1})}\Big\} \quad \forall \tau \geq 0.
		\end{equation*}
		Combining ${\sum_{i=1}^m}\|u_i\|_{L^1(Q_{\tau,\tau+1})} \le C$ for all {$\tau\geq 0$}, it follows that ${\sum_{i=1}^m}\|u_i\|_{L^p(Q_{\tau,\tau+1})} \le C$ for all $1\le p \le p_0$ and all ${ \tau\ge 0}$. From the intermediate sum condition {\eqref{ISC}}, 
		$$\partial_t v_1 + d_1(-\Delta)^{\alpha}v_1 = \varphi_\tau' u_1 + \varphi_\tau f_1(x,t,\ub) \le \varphi_\tau' u_1 + \Phi + C|\ub|^{\mu} =: \varphi_\tau' u_1 + g_*.
		$$ 
		We have $\|u_1\|_{L^{p_0/{\rho}}(Q_{\tau,\tau+2})} + \|g_*\|_{L^{p_0/{\rho}}(Q_{\tau,\tau+2})} \le C$. Applying Lemma \ref{lem:heat_signed} with $T = \tau+2$ yields $\|v_i\|_{L^{p_1}(Q_{\tau,\tau+2})} \le C$ with $p_1$ is defined in \eqref{p1}. By the intermediate sum condition \eqref{ISC}, we have
		\begin{equation*} 
			\partial_t(v_{i+1}+v) + (-\Delta)^{\alpha}(v_{i+1}+k(x,t)v) \le \varphi_\tau'\sum_{j=1}^iu_j + g_*
		\end{equation*}
		with $v = \sum_{j=1}^ia_{ij}v_j$ and $k(x,t) = \bra{\sum_{j=1}^i d_ja_{ij}u_j}\bra{\sum_{j=1}^ia_{ij}u_j}^{-1}$. Now, we can apply Lemma \ref{lem:propagation} with $T = \tau+2$ to obtain
		\begin{equation*} 
			\|v_{i+1}\|_{L^{p_1}(Q_{\tau,\tau+2})} \le C\bra{\|v\|_{L^{p_1}(Q_{\tau,\tau+2})} + \Big\|\sum_{j=1}^i\varphi_\tau' v_j\Big\|_{L^{p_0/{\rho}}(Q_{\tau,\tau+2})} + \|g_*\|_{L^{p_0/{\rho}}(Q_{\tau,\tau+2})}}.
		\end{equation*}
		By induction, similarly to the proof the global existence, we have
		$\|v_i\|_{L^{p_n}(Q_{\tau,\tau+2})} \le C$, for all $\tau \ge 0$, $i=1,\ldots, m$ and
		for all $p_n$ defined as in \eqref{pn}, where \textit{$C$ is a constant independent of $\tau$}. By repeating arguments for the proof of global existence, we obtain finally that $\|v_i\|_{L^{s}(Q_{\tau,\tau+2})} \leq C$ for all $i=1,\ldots, m$, all $s\in [1,\infty)$ and all $\tau \ge 0$, where $C$ is independent of $\tau$. A final application of Lemma \ref{lem:heat_signed} leads to 
		$\|v_i\|_{L^{\infty}(Q_{\tau,\tau+2})} \le C$ for all $i=1,\ldots, m$,
		where \textit{$C$ is independent of $\tau$}. This gives the desired uniform-in-time bound of solutions \eqref{uniform-bound-2}.	
	\end{proof}

	\section{H\"older continuity of a non-divergence fractional diffusion equation}\label{appendix:Holder_v}
	
	We show in this subsection that the function $v$ defined in \eqref{def_v} is H\"older continuous as claimed in Lemma \ref{lem:Holder_v}. {For simplicity, we assume that $\tau = 0$}. Recall that $v$ solves the equation \eqref{h4}
	\begin{equation*} 
		b(x,t)\partial_t v(x,t) + (-\Delta)^{\alpha}v(x,t) = U_0(x):= \sum_{i=1}^mu_{i,0}(x).
	\end{equation*}
	Due to the appearance of $b(x,t)$ in front of the time derivative, as well as the inhomogeneous term $U_0$, the results of \cite{caffarelli2011regularity} are not directly applicable to obtain H\"older continuity of $v$. However, by closely examining the proof in \cite{caffarelli2011regularity}, and using the ideas in \cite{vasseur2016giorgi} to deal with the inhomogeneous term, we find that the arguments therein can be used with some modifications. 
	
	For each $\eps>0$, the rescaled function $\tilde v(y,s) = v(x_0+\eps y, t_0 + \eps^{2\alpha}s)$ solves
	$$
		\tilde b(y,s)\partial_s \tilde v + (-\Delta)^{\alpha}\tilde v(y,s) = \eps^{2\alpha}\tilde U_0(y),
	$$
	where $\tilde b(y,s) = b(y_0 + \eps y, t_0 + \eps^{2\alpha}s)$ and $\tilde U_0(y) = U_0(y_0 + \eps y)$. It is clear that $\tilde b$ satisfies the same bounds \eqref{bound_b} as $b$. Indeed, we have
	\begin{equation*}
		\tilde b(y,s)\partial_s \tilde v(y,s) = \eps^{2\alpha}b(x_0+\eps y,t_0+\eps^{2\alpha}s)(\partial_s v)(x_0+\eps y,t_0+\eps^{2\alpha}s),
	\end{equation*}
	and
	\begin{align*}
		(-\Delta)^{\alpha}\tilde v(y,s) = \eps^{2\alpha}((-\Delta)^{\alpha}v)(x_0+\eps y, t_0+\eps^{2\alpha}s).
	\end{align*}
	Because of this rescaling, we can assume that $\|U_0\|_{\LO{1}\cap \LO{\infty}}$ as small as required.

	\medskip
	In the following, we sketch the main steps of the proof with suitable modifications.
	
	\medskip
	\noindent
	\textbf{Step 1:} Local energy estimates. We define the function $\psi(x) = (|x|^{\alpha}-1)_+$ for $x\in \mathbb R^N$ and the lifted function $\psi_L(x) = L + \psi(x)$ for $L>0$. By multiplying both sides of equation \eqref{h4} with $(v - \psi_L)_+$ we have
	$$
		\intR b(x,t)(\partial_t v) (v-\psi_L)_+dx + \intR (-\Delta)^{\alpha}v (v-\psi_L)_+dx = \intR U_0(v-\psi_L)_+dx.
	$$
	The right hand side is estimated as
	\begin{equation*}
		\intR U_0(v-\psi_L)_+dx \le \|U_0\|_{\LO{\infty}}\intR (v-\psi_L)_+dx.
	\end{equation*}
	By defining the bilinear form
	\begin{equation*}
		B[u,v] = \intR\intR \frac{(u(x)-u(y))(v(x)-v(y))}{|x-y|^{N+2\alpha}}dxdy,
	\end{equation*}
	we compute
	\begin{align*}
				\intR (-\Delta)^{\alpha}v (v-\psi_L)_+dx 
				= \frac 12 B[v, (v-\psi_L)_+].
	\end{align*}
	Finally, using the fact that $\partial_tv = \sum_{i=1}^m u_i \ge 0$, we estimate
	\begin{equation*}
		\intR b(x,t)(\partial_t v) (v-\psi_L)_+dx \ge \underline{b}\intR (\partial_t v)(v-\psi_L)_+dx = \frac{\underline{b}}{2}\frac{d}{dt}\intR(v-\psi_L)_+^2dx.
	\end{equation*}
	Therefore, we have
	\begin{equation*}
		\frac{\underline{b}}{2}\intR (v-\psi_L)_+^2dx + \frac 12 B[v, (v-\psi_L)_+] \le \|U_0\|_{\LO{\infty}}\intR (v-\psi_L)_+dx.
	\end{equation*}
	Now, we can proceed similarly to \cite[Lemma 3.1, \textbf{first step}]{caffarelli2011regularity} to get the energy estimates
	\begin{equation}\label{a2}
		\begin{aligned}
			&\frac{d}{dt}\intR (v-\psi_L)_+^2dx + \frac{1}{\underline{b}}\|(v-\psi_L)_+\|_{H^{\alpha}(\R^N)}^2 + \frac{1}{\underline{b}}B[(v-\psi_L)_-, (v-\psi_L)_+]\\
			&\le C_{N,\underline{b},\alpha}
			\bra{\bra{1+\|U_0\|_{\LO{\infty}}}\intR (v-\psi_L)_+dx + \intR \chi_{\{v-\psi_L>0\}}dx + \intR(v-\psi_L)_+^2dx}.
		\end{aligned}
	\end{equation}
	
	\medskip
	\noindent \textbf{Step 2:} The first De Giorgi's lemma. From the energy estimates \eqref{a2}, one can produce the nonlinear recurrence and obtain what is called the first De Giorgi's lemma.
	\begin{lemma}\label{lem:apx1}
		There is $\eps_0\in (0,1)$ depending only on $N, \alpha, \underline{b}$ and $\|U_0\|_{\LO{\infty}}$ such that the following implication holds true:
		\begin{equation*}
			\int_{-2}^0\intR (v(x,t) - \psi(x))_+^2dxdt \le \eps_0 \quad \Longrightarrow \quad v(x,t) \le \frac 12 + \psi(x) \; \text{ for all } \; (x,t)\in \R^N\times[-1,0].
		\end{equation*}
	\end{lemma}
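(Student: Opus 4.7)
The plan is to run a De Giorgi iteration at the decreasing levels $L_k := \frac{1}{2}(1 - 2^{-k})$ and increasing times $T_k := -1 - 2^{-k}$, $k \ge 0$, so that $L_k \nearrow \frac{1}{2}$ and $T_k \nearrow -1$. Writing $\psi_k := L_k + \psi$ and $U_k := \int_{T_k}^0 \intR (v - \psi_k)_+^2 \, dx \, dt$, the hypothesis gives $U_0 \le \eps_0$, and the goal is a nonlinear recurrence of the form $U_{k+1} \le C\, 4^{\sigma k}\, U_k^{1+\beta}$ with $\sigma, \beta > 0$ depending only on $N$, $\alpha$, $\underline{b}$; a standard nonlinear iteration lemma (cf.\ \cite{caffarelli2011regularity}) then supplies a threshold $\eps_0$ for which $U_k \to 0$, forcing $(v - \psi_{1/2})_+ \equiv 0$ on $\R^N \times [-1, 0]$, which is the claimed conclusion. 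The dependence of $\eps_0$ on $\|U_0\|_{\LO{\infty}}$ enters only through the coefficients of the inhomogeneous terms in \eqref{a2}; thanks to the self-similar rescaling $\tilde v(y,s) = v(x_0 + \eps y, t_0 + \eps^{2\alpha} s)$ (which multiplies $U_0$ by $\eps^{2\alpha}$), this dependence is harmless.

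To produce the recurrence I would integrate the energy inequality \eqref{a2} with $L = L_{k+1}$ from a time $s \in [T_k, T_{k+1}]$ to $t \in [T_{k+1}, 0]$, take the supremum in $t$, and average in $s$ over the window of length $2^{-k-1}$; the averaging furnishes a factor $2^{k+1}$ in front of the time-integrated $L^2$ norm and yields
\begin{align*}
\sup_{t \in [T_{k+1}, 0]} \intR (v - \psi_{k+1})_+^2 \, dx &+ \int_{T_{k+1}}^0 \norm{(v - \psi_{k+1})_+}_{H^\alpha(\R^N)}^2 \, dt \\
&\lesssim 2^k U_k + \Lambda_k^{(1)} + \Lambda_k^{(2)},
\end{align*}
where the lower-order remainders $\Lambda_k^{(1)} := (1 + \|U_0\|_{\LO{\infty}}) \int_{T_k}^0 \intR (v - \psi_{k+1})_+ \, dx \, dt$ and $\Lambda_k^{(2)} := \int_{T_k}^0 |\{v(\cdot,t) > \psi_{k+1}\}| \, dt$ carry the inhomogeneity. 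These are not quadratic in $(v - \psi_{k+1})_+$, but the geometric gap $L_{k+1} - L_k = 2^{-k-2}$ converts them into quadratic ones at the cost of a factor $4^{O(k)}$: the inclusion $\{v > \psi_{k+1}\} \subset \{(v - \psi_k)_+ > 2^{-k-2}\}$, together with Chebyshev and Cauchy--Schwarz, gives $\Lambda_k^{(1)} + \Lambda_k^{(2)} \lesssim 4^{k+2}\, U_k$. Interpolating the resulting $L^\infty_t L^2_x$ and $L^2_t \dot{H}^\alpha_x$ bounds via the fractional Gagliardo--Nirenberg inequality (Lemma \ref{fGN-ineq}) then yields $(v - \psi_{k+1})_+ \in L^{2+4\alpha/N}(Q_{T_{k+1}, 0})$; H\"older's inequality on $\{v > \psi_{k+1}\}$, whose measure has just been controlled by $U_k$, upgrades the $L^2$ integral $U_{k+1}$ to a super-linear power of $U_k$, closing the recurrence with $\beta = 2\alpha/N$.

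The main obstacle is the simultaneous presence of (a) the non-constant coefficient $b(x,t)$, (b) the inhomogeneous source $U_0$, and (c) the fractional, non-divergence nature of \eqref{h4}, which together prevent a direct application of \cite{caffarelli2011regularity}. Fortunately, (a) enters only through the constant $\underline{b}$ already built into \eqref{a2}, while (b) is defused by the combination of the rescaling (to make $\|U_0\|_{\LO{1} \cap \LO{\infty}}$ small) and the super-level set trick described above (to quadratize the remainders $\Lambda_k^{(j)}$), so that the iteration parallels the arguments of \cite{caffarelli2011regularity, vasseur2016giorgi}. A secondary subtlety is that the cross-term $B[(v - \psi_L)_-, (v - \psi_L)_+]$ on the left-hand side of \eqref{a2}, though non-negative and hence discardable, is the reason one may replace $H^\alpha$ by its homogeneous counterpart $\dot{H}^\alpha$ when invoking fractional Sobolev; care with this identification is needed to keep the constants in the Gagliardo--Nirenberg step independent of $k$.
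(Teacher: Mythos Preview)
Your proposal is correct and follows exactly the De Giorgi iteration that the paper invokes (the paper itself only states that ``one can produce the nonlinear recurrence'' from \eqref{a2}, deferring the details to \cite{caffarelli2011regularity,vasseur2016giorgi}); the choice of levels $L_k=\tfrac12(1-2^{-k})$ (these are \emph{increasing}, not decreasing as you wrote), the time cut-offs $T_k$, the Chebyshev conversion of $\Lambda_k^{(1)},\Lambda_k^{(2)}$ into quadratic quantities, and the $L^\infty_tL^2_x$--$L^2_t\dot H^\alpha_x$ interpolation are precisely the standard steps. Two cosmetic points: you have reused the symbol $U_0$ both for the source term $\sum_i u_{i,0}$ and for your zeroth iterate, and the nonlinear exponent that falls out of the H\"older step is $\beta=2\alpha/(N+2\alpha)$ rather than $2\alpha/N$, though of course only $\beta>0$ matters.
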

	As a consequence, one gets
	\begin{corollary}\label{cor:apx1}
		There exists $\delta = \delta(N,\alpha,\underline{b},\|U_0\|_{\LO{\infty}}) \in (0,1)$ such that if
		\begin{equation*}
			v(x,t) \le 1 + (|x|^{\frac{\alpha}{2}}-1)_+ \text{ on } \R^N\times [-2,0] \quad \text{and} \quad
			|\{v > 0 \}\cap \{B_2\times [-2,0]\}| \le \delta,
		\end{equation*}
		then $v(x,t) \le \frac{1}{2}$  for all $(x,t)\in B_1\times [-1,0]$,
		where $B_1$ and $B_2$ are balls in $\R^N$ centered at $0$ with radius $1$ and $2$, respectively.
		\begin{proof}
			The proof follows almost exactly as in \cite[Corollary 3.3]{caffarelli2011regularity}. It should only be remarked that for $(x_0,t_0)\in B_1\times [-1,0]$ the shifted solution $v_R(y,s) = v(x_0 + \frac{y}{R}, t_0 + \frac{s}{R^{2\alpha}})$
			solves 
			\begin{equation}\label{a3}
				b(x_0+\frac{y}{R},t_0+\frac{s}{R^{2\alpha}})\partial_s v_R + (-\Delta)^{\alpha}v_R(y,s) = \frac{1}{R^{2\alpha}}U_0(x_0+\frac{y}{R}),
			\end{equation}
			and since $R\ge 1$, the right hand side of \eqref{a3} is bounded by $\|U_0\|_{\LO{\infty}}$. Because of that Lemma \ref{lem:apx1} is applicable as the constant $\eps_0$ depends only on the bound of the right hand side, which is now smaller than or equal $\|U_0\|_{\LO{\infty}}$.
		\end{proof}
	\end{corollary}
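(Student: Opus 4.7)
My plan is to reduce Corollary \ref{cor:apx1} to the first De Giorgi lemma (Lemma \ref{lem:apx1}) via a rescaling centered at an arbitrary point $(x_0, t_0) \in B_1 \times [-1, 0]$, following the template of \cite{caffarelli2011regularity}. Define the shifted, parabolically rescaled function $v_R(y, s) := v(x_0 + y/R, t_0 + s/R^{2\alpha})$ for a parameter $R \geq 1$ to be fixed. A direct computation shows $v_R$ satisfies the rescaled equation \eqref{a3}, whose coefficient $\tilde b(y,s) = b(x_0 + y/R, t_0 + s/R^{2\alpha})$ still satisfies the two-sided bounds of $b$, and whose source term $R^{-2\alpha} U_0(x_0 + y/R)$ has $L^\infty$-norm bounded by $\|U_0\|_{\LO{\infty}}$ whenever $R \geq 1$. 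Since the constant $\eps_0$ produced by Lemma \ref{lem:apx1} depends only on $N,\alpha,\underline b$ and $\|U_0\|_{\LO{\infty}}$, it applies uniformly to every $v_R$ with $R \geq 1$.

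The goal is to invoke Lemma \ref{lem:apx1} for $v_R$ and read off $v_R(0, 0) \leq \tfrac{1}{2} + \psi(0) = \tfrac{1}{2}$, i.e. $v(x_0, t_0) \leq \tfrac{1}{2}$. The verification of the smallness hypothesis $\int_{-2}^0 \intR (v_R(y,s) - \psi(y))_+^2\, dy\, ds \leq \eps_0$ with $\psi(y) = (|y|^\alpha - 1)_+$ proceeds by splitting $\R^N$ into a core $\{|y| \leq R_0\}$ and a tail $\{|y| > R_0\}$, and exploiting the fact that the hypothesized barrier $v(x) \leq 1 + (|x|^{\alpha/2}-1)_+$ grows only like $|x|^{\alpha/2}$, genuinely slower than the growth $|y|^\alpha$ of the lifting function $\psi$.

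On the tail, for $R \geq 1$ and $|y| \geq 1$, the pointwise bound yields $v_R(y, s) \leq 1 + (1 + |y|/R)^{\alpha/2} \leq 1 + 2^{\alpha/2} |y|^{\alpha/2}$, while $\psi(y) = |y|^\alpha - 1$; setting $z = |y|^{\alpha/2}$, the inequality $v_R \leq \psi$ reduces to a one-variable quadratic $z^2 - 2^{\alpha/2} z - 2 \geq 0$, which holds for $|y| \geq R_0$ with some $R_0$ depending only on $\alpha$. Hence the tail contributes zero, \emph{uniformly in $R \geq 1$}. On the core the integrand is pointwise bounded by $(1 + R_0^{\alpha/2})^2$ and supported in $\{v_R > 0\} \cap (B_{R_0} \times [-2, 0])$; under the change of variables $x = x_0 + y/R$, $t = t_0 + s/R^{2\alpha}$, whose Jacobian is $R^{N+2\alpha}$, this support has $(y,s)$-measure equal to $R^{N+2\alpha}$ times the $(x,t)$-measure of the image set. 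Provided $R$ is large enough that $R_0/R \leq 1$ (so $x_0 + B_{R_0/R} \subset B_2$) and $R^{2\alpha} \geq 2$ (so $[t_0 - 2/R^{2\alpha}, t_0] \subset [-2, 0]$), the image lies inside $B_2 \times [-2, 0]$, so its measure is at most $\delta$.

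Combining the two pieces gives $\int_{-2}^0 \intR (v_R - \psi)_+^2\, dy\, ds \leq (1 + R_0^{\alpha/2})^2 R^{N+2\alpha} \delta$. The main obstacle, as is typical in De Giorgi schemes, is the bookkeeping of parameters and the order of their selection: first $R_0$ is fixed from the quadratic inequality (depending only on $\alpha$); then $R$ is fixed large enough so that the geometric inclusion holds and the rescaled source is controlled by $\|U_0\|_{\LO{\infty}}$; finally $\delta = \delta(N,\alpha,\underline b, \|U_0\|_{\LO{\infty}})$ is chosen so small that $(1 + R_0^{\alpha/2})^2 R^{N+2\alpha} \delta \leq \eps_0$. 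Applying Lemma \ref{lem:apx1} then yields $v(x_0, t_0) \leq \tfrac{1}{2}$, and since $(x_0, t_0) \in B_1 \times [-1, 0]$ was arbitrary, the corollary follows. The delicate point worth emphasizing is that the \emph{uniform-in-$R$} tail estimate is what allows $R_0$ to be chosen before $R$; it rests squarely on the gap between the $\alpha/2$ growth of the assumed barrier and the $\alpha$ growth of the lifting function $\psi$.
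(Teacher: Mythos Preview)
Your proof is correct and follows essentially the same approach as the paper, which defers to \cite[Corollary 3.3]{caffarelli2011regularity} and only remarks on the rescaled equation \eqref{a3}; you have simply spelled out the core/tail splitting and parameter-selection details that the paper leaves implicit. The key observation in both is identical: the rescaled source $R^{-2\alpha}U_0(x_0+y/R)$ stays bounded by $\|U_0\|_{\LO{\infty}}$ for $R\ge 1$, so Lemma \ref{lem:apx1} applies with a uniform $\eps_0$.
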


	\medskip
	\noindent \textbf{Step 3:} The second De Giorgi's lemma. Define the function 
	\begin{equation*} 
		F(x) = \sup\{-1, \inf\{0,|x|^2-9\}\}, x\in \R^N
	\end{equation*}
	and the function, for $\lambda < 1/3$, 
	\begin{equation*} 
		\theta_\lambda(x) = \begin{cases}
			0 & \text{ if } |x|\le \lambda^{-2/\alpha},\\
			((|x|-\lambda^{-2/\alpha})^{\alpha/2}-1)_+ & \text{ if } |x| \ge \lambda^{-2/\alpha}.
		\end{cases}
	\end{equation*}
	Using these, we define the three cut-off functions
	\begin{equation*} 
		\varphi_0 = 1 + \theta_\lambda + F, \quad \varphi_1 = 1 + \theta_\lambda + \lambda F, \quad \varphi_2 = 1 + \theta_\lambda + \lambda^2 F.
	\end{equation*}
	We have the following result.
	\begin{lemma}\label{lem:apx2}
		Let $\delta$ be the constant in Corollary \ref{cor:apx1}. Then there are $\mu>0, \gamma > 0$, and $\lambda \in (0,1)$, depending only on $N, \underline{b}, \alpha$ and $\|U_0\|_{\LO{1}\cap \LO{\infty}}$, such that for any solution $v: [-3,0]\times \R^N \to \R$ of \eqref{h4} satisfying
		\begin{gather*}
			v(x,t) \le 1 + \theta_\lambda(x) \; \text{ on } \R^N\times [-3,0], \quad 
			|\{v < \varphi_0\}\cap (B_1\times (-3,-2))| \ge \mu,
		\end{gather*}
		then we have either
		\begin{equation*} 
			|\{v > \varphi_2\}\cap (\R^N\times(-2,0))| \le \delta, \quad\text{ or } \quad  |\{\varphi_0 < w < \varphi_2 \}\cap (\R^N\times(-3,0))| \ge \gamma.
		\end{equation*}
	\end{lemma}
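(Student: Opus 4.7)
The plan is to prove the dichotomy by contradiction: assume both
$|\{v>\varphi_2\}\cap (\R^N\times(-2,0))|>\delta$ and
$|\{\varphi_0<v<\varphi_2\}\cap(\R^N\times(-3,0))|<\gamma$
and derive a contradiction by adapting the two-level De Giorgi energy argument underlying Lemma \ref{lem:apx1} and Corollary \ref{cor:apx1} to the non-divergence nonlocal equation \eqref{h4}. The rescaling reduction used above allows me to assume $\|U_0\|_{\LO{1}\cap\LO{\infty}}$ as small as needed.

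First, I would derive a local energy estimate for the intermediate truncation $(v-\varphi_1)_+$, in the same spirit as \eqref{a2}. Testing \eqref{h4} against $(v-\varphi_1)_+$, using $\underline{b}\le b(x,t)$ and $\partial_t v\ge 0$ to bound the time term from below (modulo errors generated by differentiating the explicit cut-off $\varphi_1=1+\theta_\lambda+\lambda F$) and splitting the nonlocal bilinear form $B[v,(v-\varphi_1)_+]=B[(v-\varphi_1)_+,(v-\varphi_1)_+]+B[(v-\varphi_1)_-,(v-\varphi_1)_+]$ with the cross-term having a favorable sign, I obtain
$$\frac{d}{dt}\int_{\R^N}(v-\varphi_1)_+^2\,dx + c\|(v-\varphi_1)_+\|^2_{H^\alpha(\R^N)}\le C_\lambda\bra{\int_{\R^N}(v-\varphi_1)_+\,dx + |\{v>\varphi_1\}|} + \text{l.o.t.},$$
where the lower-order terms are absorbed by the smallness of $\|U_0\|_{\LO{1}\cap\LO{\infty}}$. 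By the hypothesis $|\{v<\varphi_0\}\cap(B_1\times(-3,-2))|\ge\mu$ and Fubini, there is a time $t_0\in(-3,-2)$ at which $(v-\varphi_1)_+(\cdot,t_0)$ vanishes on a set of measure $\gtrsim \mu$ inside $B_1$; combined with the global envelope $v\le 1+\theta_\lambda$, this initializes the energy inequality with small $L^2$-data and delivers a uniform control of $\|(v-\varphi_1)_+\|_{L^2((t_0,0);\dot H^\alpha(\R^N))}$.

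Next, I would apply a nonlocal De Giorgi isoperimetric inequality slicewise in time. On $A_t:=\{v(\cdot,t)>\varphi_2\}\cap B_3$ the function $f_t:=(v-\varphi_1)_+(\cdot,t)$ satisfies $f_t\ge\varphi_2-\varphi_1=(\lambda^2-\lambda)F\ge c\lambda$, on $B_t:=\{v(\cdot,t)<\varphi_0\}\cap B_3$ it vanishes, and the intermediate region $\{0<f_t<c\lambda\}\cap B_3$ sits inside $\{\varphi_0<v<\varphi_2\}$. The isoperimetric estimate then gives
$$|A_t|\cdot|B_t|\le C\|f_t\|_{\dot H^\alpha(\R^N)}\,\bigl|\{0<f_t<c\lambda\}\bigr|^{1/2}.$$
Integrating in $t\in(-2,0)$, combining with the $H^\alpha$ bound from the previous step via Cauchy--Schwarz, and invoking a symmetric energy estimate for $(\varphi_0-v)_+$ to propagate the past measure lower bound $|\{v<\varphi_0\}\cap(B_1\times(-3,-2))|\ge\mu$ forward in time, one ends up with $\delta\mu\lesssim \sqrt{\gamma}$, which is a contradiction as soon as $\gamma$ is chosen small enough depending on $\delta,\mu,\lambda$.

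The main obstacle is the forward-in-time propagation of the lower-level set $\{v<\varphi_0\}$. In the homogeneous divergence-form setting of \cite{caffarelli2011regularity} this propagation is automatic, but here one must control three extra defects simultaneously: the non-constant coefficient $b(x,t)$ in front of $\partial_t v$, the inhomogeneity $U_0$, and the nonlocal tail contribution induced by the global growth of $\theta_\lambda$ at infinity. The rescaled smallness of $\|U_0\|_{\LO{1}\cap\LO{\infty}}$ together with the quantitative spacing between $\varphi_0,\varphi_1,\varphi_2$ --- itself pinned down by the choice of $\lambda$ small in terms of $N,\alpha,\underline{b}$ --- are precisely what allows each of these error terms to be absorbed into the main $H^\alpha$ energy, closing the contradiction.
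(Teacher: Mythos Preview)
Your outline follows the same route as the paper --- namely the argument of \cite[Lemma~4.1]{caffarelli2011regularity} --- and most of what you describe (energy inequality for $(v-\varphi_1)_+$, control of the nonlocal cross term, a fractional isoperimetric step) is indeed the skeleton of that proof. The paper's own proof is a one-line citation: once the energy estimate \eqref{a2} is in place, the argument of \cite{caffarelli2011regularity} applies \emph{verbatim}. The point you seem to miss is that all of the ``three extra defects'' you list --- the coefficient $b(x,t)$, the source $U_0$, the tail $\theta_\lambda$ --- are absorbed already at the level of \eqref{a2} (established in Step~1), not inside the second De~Giorgi lemma itself. So there is no additional adaptation needed here; the difficulty is located in the wrong place.

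More concretely, your proposed ``symmetric energy estimate for $(\varphi_0-v)_+$'' to propagate the lower-level set forward in time is both unnecessary and problematic. It is unnecessary because \cite{caffarelli2011regularity} never uses such an estimate: the interaction with $\{v<\varphi_0\}$ is captured entirely through the good-signed cross term $B[(v-\varphi_1)_-,(v-\varphi_1)_+]$ appearing on the left of \eqref{a2}, working only with the \emph{upper} truncation $(v-\varphi_1)_+$. It is problematic because the paper's device for handling the non-divergence coefficient $b(x,t)$ --- namely the pointwise sign $\partial_t v\ge 0$, which yields $\int b\,\partial_t v\,(v-\psi)_+\,dx\ge\frac{\underline{b}}{2}\frac{d}{dt}\int(v-\psi)_+^2\,dx$ --- gives the correct coercivity only for upper truncations. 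For $(\varphi_0-v)_+$ the inequality reverses and you lose the dissipative structure, so that route does not close in this setting. Drop that step and simply invoke \eqref{a2} with $\psi_L$ replaced by $\varphi_1$; the rest is \cite{caffarelli2011regularity}.
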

	\begin{proof}
		The proof of this lemma follows exactly from \cite[Lemma 4.1]{caffarelli2011regularity} because it uses only the energy estimates \eqref{a2}.
	\end{proof}

	\medskip
	\noindent \textbf{Step 4:} Proof of H\"older continuity. By defining the function for any $\eps>0$,
	\begin{equation} \label{theta}
		\theta_{\eps,\lambda}(x) = \begin{cases}
			0 & \text{ if } |x| \le \lambda^{-2/\alpha},\\
			((|x|-\lambda^{-2/\alpha})^\eps - 1)_+ & \text{ if } |x| \ge \lambda^{-2/\alpha},
		\end{cases}
	\end{equation}
	with $\lambda$ in Lemma \ref{lem:apx2}, we have the following oscillation lemma.
	\begin{lemma}\label{lem:apx3}
		There exist $\eta>0$ and $\eps>0$ and $\lambda^*$ such that if $\|U_0\|_{\LO{1}\cap 
			\LO{\infty}} \le \eta$, $u$ solves \eqref{h4} in $\R^N\times [-3,0]$ and satisfies
		$-1 - \theta_{\eps,\lambda^*} \le v \le 1 + \theta_{\eps,\lambda^*}$,
		then we have
		\begin{equation*} 
			\sup_{B_1 \times [-1,0]}v - \inf_{B_1\times[-1,0]}v \le 2-\lambda^*.
		\end{equation*}
	\end{lemma}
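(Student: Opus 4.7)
The plan is to execute the classical De Giorgi diminish-of-oscillation argument in the style of \cite{caffarelli2011regularity}, with modifications to handle the variable coefficient $b(x,t)$ and the inhomogeneous term $U_0$. Depending on whether $v$ is predominantly non-positive or predominantly non-negative on $B_1 \times (-3,-2)$, one of $|\{v \le 0\} \cap (B_1 \times (-3,-2))| \ge \mu$ or $|\{v \ge 0\} \cap (B_1 \times (-3,-2))| \ge \mu$ must hold, where $\mu$ is the constant from Lemma \ref{lem:apx2}. The two cases are treated symmetrically via $v \mapsto -v$, which solves an equation of the same type with forcing $-U_0$ (whose $\LO{1}\cap\LO{\infty}$ norm coincides with that of $U_0$). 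I focus on the first case, aiming to show $\sup_{B_1 \times [-1,0]} v \le 1 - \lambda^*$.

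Granting $|\{v < \varphi_0\} \cap (B_1 \times (-3,-2))| \ge \mu$, I iterate the dichotomy of Lemma \ref{lem:apx2}. At each step $k$, if alternative (a) is realized, then since $\varphi_2 \le 1 - c\lambda^2$ on $B_2$, the vertical rescaling $\tilde v = 2(v_k - (1-c\lambda^2))/(c\lambda^2)$ verifies the hypotheses of Corollary \ref{cor:apx1} (after a matching spatial rescaling that restores the form of $\theta_\lambda$), yielding $v_k \le 1/2$ on $B_1 \times [-1,0]$. Unwinding the $k$ previous affine rescalings produces $v \le 1 - \lambda^*$ on $B_1 \times [-1,0]$ with $\lambda^* = \lambda^*(\lambda, k)$. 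If alternative (b) triggers instead, I define
\begin{equation*}
v_{k+1}(y,s) := A_k \bigl(v_k(\rho_k y, \rho_k^{2\alpha} s) - B_k\bigr),
\end{equation*}
with $A_k, B_k, \rho_k$ chosen so that the intermediate layer $\{\varphi_0 < v_k < \varphi_2\}$ is mapped into $\{v_{k+1} \le 0\}$. The measure of the ``good'' set then grows by at least $\gamma$ at each step, and after $k_0 \sim \lceil |B_3\times(-3,-2)|/\gamma\rceil$ iterations, a measure-saturation contradiction forces alternative (a) to trigger for some $k < k_0$, fixing $\lambda^* = \lambda^*(\lambda, k_0)$.

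The main technical obstacle, absent from the homogeneous setting of \cite{caffarelli2011regularity}, is preserving the hypotheses of Lemmas \ref{lem:apx1}--\ref{lem:apx2} and Corollary \ref{cor:apx1} through this iteration when $U_0 \not\equiv 0$. Each rescaling transforms the equation \eqref{h4} into one of the same form, with $\tilde b$ satisfying the same bounds \eqref{bound_b} and with a new source $A_k \rho_k^{2\alpha} U_0(x_k + \rho_k \,\cdot\,)$. With the vertical factor $A_k \sim \lambda^{-2}$ and the parabolic zoom ratio $\rho_k$ chosen so that $A_k \rho_k^{2\alpha} \le 1$, the $\LO{\infty}$ norm of the iterated source is bounded by $\|U_0\|_{\LO{\infty}}$, and only this $\LO{\infty}$ control enters the local energy estimate \eqref{a2}. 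Meanwhile, the $\LO{1}$ component of the smallness hypothesis $\|U_0\|_{\LO{1}\cap\LO{\infty}} \le \eta$ is used in the initial rescaling that normalized the data, as discussed in the opening paragraph of this section. Finally, the slow outer decay $\theta_{\eps,\lambda^*}$ with $\eps>0$ sufficiently small absorbs the vertical stretching of the tail through the iterations, so that each $v_k$ still satisfies the global bound required by Lemma \ref{lem:apx2}; picking $\eta = \eta(N,\alpha,\underline{b},\lambda,k_0)$ small enough closes the argument and produces the desired $\lambda^*$.
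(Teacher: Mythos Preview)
Your proposal is correct and follows the same approach as the paper: the paper's proof simply invokes \cite[Lemma 5.1]{caffarelli2011regularity}, noting that only the first and second De Giorgi lemmata (here Lemma \ref{lem:apx1}, Corollary \ref{cor:apx1}, Lemma \ref{lem:apx2}) are used, and that the smallness hypothesis on $\|U_0\|_{\LO{1}\cap\LO{\infty}}$ is precisely what allows the iteration to survive the inhomogeneous source. Your sketch fleshes out exactly this argument.

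One minor point: in the iteration under alternative (b), the Caffarelli--Chan--Vasseur scheme uses a \emph{purely vertical} rescaling $v_{k+1} = \lambda^{-2}\bigl(v_k - (1-\lambda^2)\bigr)$, with no spatial zoom (i.e.\ $\rho_k \equiv 1$). The resulting source $\lambda^{-2k}U_0$ grows geometrically, but since the number of iterations is a priori bounded by $k_0 \sim |B_3 \times (-3,0)|/\gamma$, choosing $\eta$ so that $\lambda^{-2k_0}\eta$ remains below the thresholds of the auxiliary lemmata suffices --- which is exactly your closing remark. The spatial zoom $\rho_k$ you introduce would indeed keep the source uniformly bounded, but it also shrinks the spatial domain at each step, so the intermediate layers $\{\varphi_0 < v_k < \varphi_2\}$ no longer live in a single fixed region and the measure-saturation argument becomes less transparent. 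It is simpler (and is what \cite{caffarelli2011regularity} and the paper do) to take $\rho_k = 1$ and absorb the finitely amplified source into the smallness of $\eta$.
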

	\begin{proof}
		This follows again \cite[Proof of Lemma 5.1]{caffarelli2011regularity}, since it only uses the first and second De Giorgi's lemmata, which are obtained previously. Since the problem is not linear anymore, due to the right hand side, we need the smallness assumption $\eta$.
	\end{proof}
	
	Finally, we arrive at the proof of the H\"older continuity of $v$.
	
	\begin{proof}[\textbf{Proof of Lemma \ref{lem:Holder_v}}]
	By the remark before Step 1, we can assume that $\|U_0\|_{\LO{1}\cap \LO{\infty}} \le \eta$ for a sufficiently small constant $\eta>0$.
		For $(t_0,x_0) \in (0,\infty)\times \R^N$, define $K_0 = \min\{1, t_0/4\}^{1/2\alpha}$ and
		$\hat v(x,t) = v(x_0 + K_0x, t_0 + K_0^{2\alpha}t)$.
		We see that $v_0$ solves the equation
		\begin{equation*}
			\hat b(x,t)\partial_t \hat v(x,t) + (-\Delta)^{\alpha}\hat v(x,t) = {K_0^{2\alpha}}\hat U_0(x)
		\end{equation*}
		where $\hat b(x,t) = b(x_0+K_0x, t_0+K_0^{2\alpha}), \hat U_0(x) = U_0(x_0 + K_0x)$. 
		Choose  $L\in (0,1)$ such that
		\begin{equation*}
			\frac{1}{1-\lambda^*/2}\theta_{\eps,\lambda^*}(Lx) \le \theta_{\eps,\lambda^*}, \quad \forall |x| \ge 1/L,
		\end{equation*}
		where $\theta_{\eps,\lambda}$ is defined in \eqref{theta} and $\lambda^*$ is in Lemma \ref{lem:apx3}, and let $k_0$ be a natural number so that
		$\bra{\frac{L^{2\alpha}}{1 - \lambda^*/4}}^k \le \eta^{-1}$ for all $k=1,\ldots, k_0$.
		We define by induction the following functions for $(x,t)\in \R^N\times (-3,0)$,
		\begin{equation*}
			v_1(x,t) = \frac{v_1(x,t)}{\|\hat v\|_{L^{\infty}(\R^N\times (-3,0))}}, \quad b_1(x,t) = \hat b(x, t), \quad  U_1(x) = \frac{\hat U_0(x)}{K_0^{2\alpha}\|\hat v\|_{L^{\infty}(\R^N\times (-3,0))}},
		\end{equation*}
		and for each $k\ge 1$,
		\begin{equation*} 
			v_{k+1}(x,t) = \frac{1}{1-\lambda^*/4}(v_k(Lx,L^{2\alpha}t) - \overline{v}_k), \quad \text{ with } \quad \overline{v}_k  = \frac{1}{|B_1|}\int_{-1}^0\int_{B_1}v_k(x,t)dxdt,
		\end{equation*}
		\begin{equation*} 
			b_{k+1}(x,t) = b_k(Lx, L^{2\alpha}t), \quad U_{k+1}(x) = \frac{L^{2\alpha}}{1-\lambda^*/4}U_k(Lx).
		\end{equation*}
		We can now straightforwardly check that $v_{k}$ solves the equation
		\begin{equation*} 
			b_k(x,t)\partial_t v_k + (-\Delta)^{\alpha}v_k = U_k(x), \quad \text{ for each } k\ge 1.
		\end{equation*}
		Due to the choice of $L$ and $k_0$, we obtain the bound for $U_{k+1}$
		\begin{align*} 
			\|U_{k+1}\|_{L^{\infty}(\R^N\times (-3,0))} &= \frac{L^{2\alpha}}{1-\lambda^*/4}\|U_k\|_{L^{\infty}(\R^N\times (-3,0))}\le\cdots \le \bra{\frac{L^{2\alpha}}{1-\lambda^*/4}}^{k}\|U_1\|_{L^{\infty}(\R^N\times (-3,0))}\\
			&\le \bra{\frac{L^{2\alpha}}{1-\lambda^*/4}}^{k}\frac{\|\hat U_0\|_{\LO{\infty}}}{K_0^{2\alpha}\|\hat v\|_{L^{\infty}(\R^N\times (-3,0))}}\le \frac{1}{K_0^{2\alpha}\|v\|_{L^{\infty}(\R^N\times (t_0/4,t_0))}}
		\end{align*}
		for all $k=1,\ldots, k_0$. The rest now follows exactly as in the proof of Theorem 2.2 in \cite[End of page 865]{caffarelli2011regularity}, which shows that $v$ is $C^{\tilde \gamma}$ with the H\"older exponent given by $\tilde \gamma = \frac{\ln(1-\lambda^*/4)}{\ln(L^{2\alpha})} \in (0,1)$.
		The proof is complete.
	\end{proof}	

	
	\appendix 
	\setcounter{equation}{0}
	\renewcommand{\theequation}{A.\arabic{equation}}
	\section{Appendix}\label{appendix}
	
	
	\subsection{Fractional heat semigroup in the $L^p$ setting}\label{appendix:Lp}
	
	Let $0 \leq \tau < T$ and $1<p,q<\infty$. Denote
		$$
		W_{p,q}^{2\alpha,1}(Q_{\tau,T}):=\{ v \in W^{1,q}((\tau,T);L^p(\R^N)) \text{ such that } (-\Delta)^\alpha v \in L^q((\tau,T);L^p(\R^N)) \}
		$$
		with the norm
		$$ \| v \|_{W_{p,q}^{2\alpha,1}(Q_{\tau,T})}:= \| u \|_{L^q((\tau,T);L^p(\R^N))} + \| \partial_t u \|_{L^q((\tau,T);L^p(\R^N))} + \| (-\Delta)^\alpha u \|_{L^q((\tau,T);L^p(\R^N))}.
		$$
		When $p=q=2$, we simply write $W^{2\alpha,1}_2(Q_{\tau,T})$.
		
		We first recall the Lamberton type estimate \cite[Corrolary 1.1]{lamberton1987}.
		\begin{theorem} \label{thm:LpLq-maximal}
			Let $1<p,q<\infty$, $0 \leq \tau < T$ and $f \in L^q((\tau,T);L^p(\R^N)$. Assume $u \in C([\tau,T];L^p(\R^N))$ is the solution of  the non-homogeneous problem
			\begin{equation} \label{prob:linear-nonhomogeneous}
				\left\{  \begin{aligned}
					\partial_t u + (-\Delta)^\alpha u &= f \quad &&\text{in } Q_{\tau,T}, \\
					u(\cdot,\tau) &= 0\quad &&\text{in } \R^N.  
				\end{aligned} \right.	
			\end{equation}
			Then 
			$ \| u \|_{W_{p,q}^{2\alpha,1}(Q_{\tau,T})} \leq C_{p,q} \| f \|_{L^q((\tau,T);L^p(\R^N))}$.
	\end{theorem}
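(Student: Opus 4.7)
The plan is to reduce the theorem to a direct application of Lamberton's maximal $L^q$-regularity theorem \cite[Corollary 1.1]{lamberton1987}, which asserts maximal regularity for generators of suitable contraction semigroups on $L^p(\R^N)$. Thus my task splits into two parts: verifying that $(-\Delta)^\alpha$ fits Lamberton's hypotheses, and recovering the $\partial_t u$ bound.

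First I would collect the structural properties of the semigroup $S_\alpha(t) = e^{-t(-\Delta)^\alpha}$. From \eqref{def:Kalpha} and \eqref{selfsim}, the kernel $K_\alpha(x,t)$ is strictly positive, and a direct Fourier computation gives $\int_{\R^N} K_\alpha(x,t)\,dx = \widehat{K_\alpha}(0,t) = 1$. Hence $\{S_\alpha(t)\}$ is a symmetric submarkovian semigroup: it is positive, self-adjoint on $L^2(\R^N)$, a $C_0$-semigroup of contractions on every $L^p(\R^N)$ for $1\leq p<\infty$ (and a contraction on $C_0(\R^N)$). Because $-(-\Delta)^\alpha$ is self-adjoint and non-positive on $L^2(\R^N)$, the semigroup is analytic on $L^2$, and Stein's interpolation for symmetric submarkovian semigroups extends analyticity to all $L^p(\R^N)$ with $1<p<\infty$. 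These are exactly the hypotheses under which Lamberton obtains his abstract maximal $L^q$-regularity on $L^p$.

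Second, I would apply Lamberton's corollary: for $f \in L^q((\tau,T);L^p(\R^N))$ with $1<p,q<\infty$, the mild solution $u$ of \eqref{prob:linear-nonhomogeneous} satisfies $(-\Delta)^\alpha u \in L^q((\tau,T);L^p(\R^N))$ with
\[
\|(-\Delta)^\alpha u\|_{L^q((\tau,T);L^p(\R^N))} \leq C_{p,q}\|f\|_{L^q((\tau,T);L^p(\R^N))}.
\]
The bound on $\partial_t u$ is then free: since $u$ is a mild solution with $u(\tau)=0$, the Duhamel formula together with the regularity just obtained shows that $u$ is in fact a strong solution and the equation $\partial_t u = f - (-\Delta)^\alpha u$ holds pointwise in $L^p$; taking $L^q$-norms in $t$ and using the triangle inequality yields $\|\partial_t u\|_{L^q((\tau,T);L^p(\R^N))} \leq (1+C_{p,q})\|f\|_{L^q((\tau,T);L^p(\R^N))}$. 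Combining the two bounds and noting that $\|u\|_{L^q((\tau,T);L^p(\R^N))}$ is controlled via \eqref{est:mild-1b} completes the proof.

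The only real obstacle is matching the abstract hypotheses in \cite{lamberton1987} to the concrete operator $(-\Delta)^\alpha$; this is not delicate since the symmetric submarkovian structure of the fractional heat semigroup is textbook material (see e.g.\ \cite{Kwa2017}). Note also that the result is independent of the diffusion coefficient $\mu$: rescaling $t \mapsto t/\mu$ in \eqref{prob:linear-nonhomogeneous} exchanges $(-\Delta)^\alpha$ with $\mu(-\Delta)^\alpha$ and $f$ with $\mu^{-1}f$, so the same argument covers the $\mu$-dependent version used elsewhere in the paper (with an explicit $\mu^{-1}$ factor, consistent with \eqref{e3}).
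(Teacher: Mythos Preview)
Your proposal is correct and matches the paper's approach: the paper does not prove this theorem at all but simply recalls it as \cite[Corollary 1.1]{lamberton1987}, and you have supplied exactly the verification (symmetric submarkovian, hence analytic contraction semigroup on every $L^p$, $1<p<\infty$) needed to invoke Lamberton, together with the standard recovery of $\partial_t u$ from the equation. One small correction: \eqref{est:mild-1b} is stated only for equal space--time exponents, so for the $\|u\|_{L^q((\tau,T);L^p)}$ term you should instead use \eqref{est:Lp-Lp} combined with H\"older in time, which gives $\|u\|_{L^q((\tau,T);L^p)} \le (T-\tau)\|f\|_{L^q((\tau,T);L^p)}$.
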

	For $s \in (0,2)$ and $1 \leq p \leq \infty$, let $\dot{H}^{s,p}(\R^N)$ and $H^{s,p}(\R^N)$ be defined by
	\begin{align} \label{H^alphadot}
		\dot{H}^{s,p}(\R^N)&:= \{ u \in \CS'(\R^N):  \mathcal{F}^{-1}(|\cdot|^s \mathcal{F}(u)) \in L^p(\R^N)   \}, \\ \label{H^alpha}
		H^{s,p}(\R^N)&:= \{ u \in \CS'(\R^N):  \mathcal{F}^{-1}((1+|\cdot|^2)^{\frac{s}{2}}\mathcal{F}(u)) \in L^p(\R^N)   \},
	\end{align}
	with the corresponding norms
	\begin{align*}
		\| u \|_{\dot{H}^{s,p}(\R^N) }&:=\| \mathcal{F}^{-1}(|\cdot|^s\mathcal{F}(u))  \|_{L^p(\R^N)} = \| (-\Delta)^{\frac{s}{2}}u\|_{L^p(\R^N)},  \\
		\| u \|_{H^{s,p}(\R^N) }&:=\| \mathcal{F}^{-1}((1+|\cdot|^2)^{\frac{s}{2}}\mathcal{F}(u))  \|_{L^p(\R^N)} = \| (I-\Delta)^{\frac{s}{2}}u\|_{L^p(\R^N)}. 
	\end{align*}
	When $p=\infty$,  $\dot{H}^{s,\infty}(\R^N)$ and $H^{s,\infty}(\R^N)$ are respectively the homogeneous H\"older space $\dot{C}^{s}(\R^N)$ and the nonhomogeneous H\"older space $C^{s}(\R^N)$. When $p=2$, we simply write $\dot{H}^{s}(\R^N)$ and $H^{s}(\R^N)$ in place of $\dot{H}^{s,2}(\R^N)$ and $H^{s,2}(\R^N)$ respectively.

	\begin{lemma} \label{lem:interchange}
		Let $0 <\beta < \alpha<1$, $0 \leq \tau <T$, $1<p<\infty$ and $f \in L^\infty((\tau,T);L^p(\R^N))$. Then for $\tau<s<t<T$, $S_\alpha(t-s)f(s,\cdot) \in H^{2\beta,p}(\R^N)$ and 
		\begin{equation} \label{est:fracDS(t-s)f-1} 
			\|S_\alpha(t-s)f(s,\cdot)\|_{H^{2\beta,p}(\R^N)} \leq  C[1+(t-s)^{-\frac{\beta}{\alpha}}]\| f(\cdot,s)\|_{L^p(\R^N)},
		\end{equation}
		where $C$ depends only on $N,\alpha,\beta,p$.	Moreover,
		\begin{equation} \label{interchange-1}
			(-\Delta)^{\beta}\int_\tau^t S_\alpha(t-s)f(x,s)ds = \int_\tau^t (-\Delta)^{\beta}[S_\alpha(t-s)f(x,s)]ds \quad \text{for a.e. } x \in \R^N. 	
		\end{equation}	
	\end{lemma}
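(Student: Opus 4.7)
The plan is to split the Bessel potential norm into its $L^p$ and homogeneous fractional parts, then apply the contraction of the semigroup together with estimate \eqref{est:Lp-DS}. For the $L^p$-realization of the Bessel potential space one has the well-known equivalence
$$ \|u\|_{H^{2\beta,p}(\R^N)} \asymp \|u\|_{L^p(\R^N)} + \|(-\Delta)^\beta u\|_{L^p(\R^N)}, \qquad 1<p<\infty,\;0<\beta<1, $$
which follows from the Mikhlin--H\"ormander multiplier theorem applied to the symbols $(1+|\xi|^2)^\beta/(1+|\xi|^{2\beta})$ and its reciprocal (both of which are Mikhlin multipliers in the given range). For $u=S_\alpha(t-s)f(\cdot,s)$, the $L^p$-contractivity of $S_\alpha$ gives $\|S_\alpha(t-s)f(\cdot,s)\|_{L^p(\R^N)}\leq \|f(\cdot,s)\|_{L^p(\R^N)}$, while estimate \eqref{est:Lp-DS} with $r=p$ yields $\|(-\Delta)^\beta S_\alpha(t-s)f(\cdot,s)\|_{L^p(\R^N)} \leq C(t-s)^{-\beta/\alpha}\|f(\cdot,s)\|_{L^p(\R^N)}$. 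Adding these two bounds and invoking the equivalence above produces \eqref{est:fracDS(t-s)f-1}.

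For the interchange identity \eqref{interchange-1}, I would exploit that $(-\Delta)^\beta$ is a closed operator on $L^p(\R^N)$ with domain $H^{2\beta,p}(\R^N)$. The quantitative engine is that $\beta/\alpha<1$, which makes the singular weight $(t-s)^{-\beta/\alpha}$ integrable in $s$ on $(\tau,t)$; in particular,
$$ \int_\tau^t \|(-\Delta)^\beta S_\alpha(t-s)f(\cdot,s)\|_{L^p(\R^N)}\,ds \lesssim \|f\|_{L^\infty((\tau,T);L^p(\R^N))} \int_\tau^t (t-s)^{-\beta/\alpha}\,ds <\infty. $$
For each $\varepsilon\in (0,t-\tau)$, set $u_\varepsilon(x):=\int_\tau^{t-\varepsilon} S_\alpha(t-s)f(x,s)\,ds$. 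By the norm bound just derived, $s\mapsto S_\alpha(t-s)f(\cdot,s)$ is bounded in $H^{2\beta,p}(\R^N)$ on $[\tau,t-\varepsilon]$ (with bound $C(1+\varepsilon^{-\beta/\alpha})\|f\|_{L^\infty((\tau,T);L^p)}$), hence Bochner integrable in $H^{2\beta,p}(\R^N)$. The bounded operator $(-\Delta)^\beta\colon H^{2\beta,p}(\R^N)\to L^p(\R^N)$ therefore commutes with the Bochner integral:
$$ (-\Delta)^\beta u_\varepsilon(x) = \int_\tau^{t-\varepsilon} (-\Delta)^\beta [S_\alpha(t-s)f(x,s)]\,ds. $$
Sending $\varepsilon\to 0^+$, dominated convergence in $L^p(\R^N)$ (with the integrable majorant $(t-s)^{-\beta/\alpha}\|f(\cdot,s)\|_{L^p}$) shows that $u_\varepsilon\to \int_\tau^t S_\alpha(t-s)f(\cdot,s)\,ds$ and $(-\Delta)^\beta u_\varepsilon \to \int_\tau^t (-\Delta)^\beta [S_\alpha(t-s)f(\cdot,s)]\,ds$ in $L^p(\R^N)$. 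Closedness of $(-\Delta)^\beta$ then delivers \eqref{interchange-1}.

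The only genuinely delicate point is the passage to the limit at $s=t$: although the integrand is smooth away from the endpoint, it blows up as $s\nearrow t$, so one cannot directly pull $(-\Delta)^\beta$ through the integral sign. The truncation-plus-closedness argument above bypasses this by reducing the question to an $L^p$-convergence statement, for which the integrability exponent $-\beta/\alpha>-1$ (equivalently $\beta<\alpha$) is precisely what is needed. All the other steps are standard Bessel-potential and semigroup manipulations.
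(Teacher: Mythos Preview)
Your proof is correct and follows essentially the same approach as the paper: both establish \eqref{est:fracDS(t-s)f-1} by combining the $L^p$-contractivity of $S_\alpha$ with the smoothing estimate \eqref{est:Lp-DS}, and both justify \eqref{interchange-1} via the closedness of $(-\Delta)^\beta$ on $L^p(\R^N)$ together with the integrability of $(t-s)^{-\beta/\alpha}$ (equivalently, $\beta<\alpha$). The only cosmetic difference is that the paper invokes Hille's theorem for Bochner integrals directly---which is precisely the statement that a closed operator commutes with a Bochner integral when the integrand takes values in the domain and the image is Bochner integrable---whereas you reprove this by the truncation-plus-closedness argument; your route is slightly more hands-on but logically equivalent, and the $\varepsilon$-truncation is in fact unnecessary once one knows $s\mapsto S_\alpha(t-s)f(\cdot,s)$ is Bochner integrable in $H^{2\beta,p}(\R^N)$ on the full interval $(\tau,t)$.
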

	\begin{proof}
		First we see that the operator $(-\Delta)^{\beta}: H^{2\beta,p}(\R^N) \to L^p(\R^N)$ is continuous because
		\begin{align*} \| (-\Delta)^{\beta} \varphi \|_{L^p(\R^N)}  = \| \CF^{-1}(|\cdot|^{2\beta} \CF(\varphi)) \|_{L^p(\R^N)} = \| \varphi \|_{\dot{H}^{2\beta,p}(\R^N)} \leq \| \varphi \|_{H^{2\beta,p}(\R^N)} \quad \forall \varphi \in H^{2\beta,p}(\R^N).
		\end{align*}
		Next we observe that for $\tau<s<t<T$, $S_\alpha(t-s)f(\cdot,s) \in H^{\alpha,p}(\R^N)$. Indeed, since $f \in L^\infty((\tau,T);L^p(\R^N))$, for a.e. $s \in (\tau,T)$, $f(s,x) \in L^p(\R^N)$. Therefore, by \eqref{est:Lp-S} and \eqref{est:Lp-DS}, for any $0 \leq \tau<s<t<T$,
		\begin{align*} \nonumber
			\|S_\alpha(t-s)f(s,\cdot)\|_{L^p(\R^N)} &\leq C\| f(\cdot,s)\|_{L^p(\R^N)}, \\
			\|(-\Delta)^{\beta}[S_\alpha(t-s)f(s,\cdot)]\|_{L^p(\R^N)} &\leq C(t-s)^{-\frac{\beta}{\alpha}}\| f(\cdot,s)\|_{L^p(\R^N)},
		\end{align*} 
		which lead to \eqref{est:fracDS(t-s)f-1} . 
		This in turn implies that
		$$
		\| S_\alpha(t-s)f(\cdot,s)\|_{L^1((\tau,T);H^{2\beta,p}(\R^N))} \leq C(t-\tau + (t-\tau)^{\frac{\alpha-\beta}{\alpha}})\| f \|_{L^\infty((\tau,t);L^p(\R^N))}.
		$$
		Thus in view of Hille's theorem for Bochner integrals, we obtain \eqref{interchange-1}.
	\end{proof}
	
	\begin{lemma} \label{lem:interchange-2}
		Assume $0<\beta<\frac{1}{2}$, $0 \leq \tau <T$, $1 \leq p \leq \infty$ and $f \in L^\infty((\tau,T);L^p(\R^N))$. Then for any $\tau<s<t<T$,
		\begin{equation} \label{est:exchange-2}
			(-\Delta)^{\beta} [S_\alpha(t-s)f(\cdot,s)](x) = [((-\Delta)^{\beta} K_\alpha(\cdot,t-s)) * f(\cdot,s)](x) \quad \text{for a. e. } x \in \R^N.	
		\end{equation}	
		Moreover, 
		\begin{equation} \label{est:fracDKalpha-1}
			\| (-\Delta)^{\beta} [S_\alpha(t-s)f(\cdot,s)]\|_{L^\infty(\R^N)} \leq C(t-s)^{-\frac{N}{2\alpha p} - \frac{\beta}{\alpha} } \| f \|_{L^\infty((\tau,T);L^p(\R^N))},
		\end{equation}
		where $C$ depends only on $N,\alpha,\beta,p$.
		
	\end{lemma}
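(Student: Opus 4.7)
The plan is to first establish a pointwise decay estimate for $(-\Delta)^\beta K_\alpha(\cdot,t)$, then justify the commutation identity \eqref{est:exchange-2}, and finally combine these via H\"older's inequality to derive the $L^\infty$-estimate \eqref{est:fracDKalpha-1}.

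For the kernel estimate, I would exploit the self-similar form $K_\alpha(x,t) = t^{-N/(2\alpha)} \tilde K_\alpha(t^{-1/(2\alpha)} x)$. Applying $(-\Delta)^\beta$ and using the scaling $(-\Delta)^\beta [\tilde K_\alpha(t^{-1/(2\alpha)} \cdot)](x) = t^{-\beta/\alpha}[(-\Delta)^\beta \tilde K_\alpha](t^{-1/(2\alpha)} x)$, one gets
\[
(-\Delta)^\beta K_\alpha(x,t) = t^{-(N+2\beta)/(2\alpha)}\,[(-\Delta)^\beta \tilde K_\alpha](t^{-1/(2\alpha)} x).
\]
A Fourier-analytic argument in the spirit of \eqref{est:tilde-funda-1}--\eqref{est:grad-tilde-funda-1} yields the decay $|(-\Delta)^\beta \tilde K_\alpha(y)| \leq C(1+|y|)^{-(N+2\beta)}$, which combined with the display above produces the key pointwise bound
\[
|(-\Delta)^\beta K_\alpha(x,t)| \leq C(t^{1/(2\alpha)} + |x|)^{-(N+2\beta)}.
\]

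For the commutation \eqref{est:exchange-2}, the input is that for each $t>s$, $K_\alpha(\cdot,t-s)$ is smooth with polynomial decay, so $u:=K_\alpha(\cdot,t-s) * f(\cdot,s)$ lies in $C_b^\infty(\R^N)$ for any $f \in L^p$ with $1\leq p\leq \infty$. Since $\beta < 1/2$, the singular-integral representation of $(-\Delta)^\beta$ is absolutely convergent on $u$: the near-origin integrand $|u(x)-u(x-h)|\,|h|^{-N-2\beta}$ behaves like $|h|^{1-N-2\beta}$, integrable precisely because $2\beta<1$. Writing $u(x)-u(x-h)$ as an integral against $f$ and invoking Fubini's theorem (justified by combining the above pointwise bound on $(-\Delta)^\beta K_\alpha$ with $f\in L^p$) then allows transferring $(-\Delta)^\beta$ onto the kernel, yielding \eqref{est:exchange-2}.

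The final estimate follows by H\"older's inequality applied to the convolution representation:
\[
\|(-\Delta)^\beta [S_\alpha(t-s)f(\cdot,s)]\|_{L^\infty(\R^N)} \leq \|(-\Delta)^\beta K_\alpha(\cdot,t-s)\|_{L^{p'}(\R^N)} \|f(\cdot,s)\|_{L^p(\R^N)}.
\]
The kernel $L^{p'}$-norm is computed via the change of variables $z=(t-s)^{-1/(2\alpha)} y$, reducing matters to a dimensionless integral whose convergence requires $(N+2\beta)p' > N$ (automatic since $\beta>0$, $p'\geq 1$); the scaling produces the precise factor $(t-s)^{-N/(2\alpha p) - \beta/\alpha}$. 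The main obstacle is carefully justifying the Fubini interchange across the full range $1\leq p\leq \infty$, in particular the endpoint $p=\infty$ (where $f$ is only bounded, so absolute convergence of the double integral relies crucially on the fast enough kernel decay $(N+2\beta)>N$) and $p=1$; the hypothesis $\beta<1/2$ is exactly what keeps the singular integral absolutely convergent near the origin and hence eliminates any need for a truncated principal value in the argument.
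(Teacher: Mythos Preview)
Your approach is correct and reaches the same conclusion, but the organization differs from the paper's. You first isolate a pointwise bound $|(-\Delta)^\beta K_\alpha(x,t)| \le C(t^{1/(2\alpha)}+|x|)^{-(N+2\beta)}$ (extending the paper's \eqref{est:fracfunda}, which is the case $\beta=\alpha/2$), and then use it both to justify Fubini and to run H\"older on the convolution. The paper instead works directly with the double singular integral: it bounds the kernel difference $|K_\alpha(x-z,t-s)-K_\alpha(y-z,t-s)|$ using only the elementary estimates \eqref{est:tilde-funda-1}--\eqref{est:grad-tilde-funda-1}, splits the $y$-integral into $B(x,\lambda)$ and its complement, and minimizes over $\lambda$ to obtain absolute convergence and the $L^\infty$ bound simultaneously; the commutation \eqref{est:exchange-2} then drops out once the principal value is seen to be unnecessary on both sides. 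Your route is a bit more modular and makes the final H\"older step transparent, at the cost of invoking (or reproving) a sharper kernel estimate; the paper's route is more self-contained, needing nothing beyond the crude size and gradient bounds on $K_\alpha$. One small point: your Fubini justification should be phrased in terms of the \emph{absolute} double integral being finite, which ultimately needs the same near/far splitting on the $h$-integral (Lipschitz near $0$, decay at infinity) that the paper carries out explicitly; the pointwise bound on $(-\Delta)^\beta K_\alpha$ alone is a statement about the signed integral and does not by itself license Fubini.
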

	\begin{proof}
		Since $f \in L^\infty((\tau,T);L^p(\R^N)$, we have $f(\cdot,s) \in L^p(\R^N)$ for a.e. $s \in (\tau,T)$, hence by \eqref{est:Lp-DS}, for any $0 \leq \tau<s<t<T$, $(-\Delta)^{\beta} [S_\alpha(t-s)f(\cdot,s)] \in L^p(\R^N)$. We write
		\begin{equation} \label{est:exchange-3} \begin{split} &(-\Delta)^{\beta} [S_\alpha(t-s)f(\cdot,s)](x) \\
				&= C_{N,\beta} \mathrm{P.V.} \int_{\R^N} \int_{\R^N} \frac{[K_\alpha(x-z,t-s)  - K_\alpha(y-z,t-s)]f(z,s)}{|x-y|^{N+2\beta}}dzdy.
		\end{split} \end{equation}
		By the self similarity property of the heat equation $K_\alpha$ in  \eqref{selfsim}, the pointwise estimate on the gradient of $\tilde K_\alpha$ in \eqref{est:grad-tilde-funda-1}, H\"older's inequality and Minkowski's inequality, we have
		\begin{equation} \label{Ka-4a} \begin{aligned}
				&\int_{\R^N}	|(K_\alpha(x-z,t-s)  - K_\alpha(y-z,t-s))f(z,s)|dz  \\
				&\leq C(t-s)^{-\frac{N}{2\alpha p}-\frac{1}{2\alpha} } |x-y| \| f \|_{L^\infty((\tau,T);L^p(\R^N))}.
			\end{aligned}
		\end{equation}
		On the other hand, by using \eqref{est:tilde-funda-1}, we have
		\begin{equation} \label{Ka-5} \begin{split}
				\int_{\R^N} |(K_\alpha(x-z,t-s)  - K_\alpha(y-z,t-s))f(z,s)|dz  \leq C(t-s)^{-\frac{N}{2\alpha p}} \| f \|_{L^\infty((\tau,T);L^p(\R^N))}.
		\end{split} \end{equation}

		Let $\lambda>0$. Combining \eqref{Ka-4a} and \eqref{Ka-5} leads to
		\begin{equation*} \begin{aligned}
				&\int_{\R^N} \int_{\R^N} \frac{|(K_\alpha(x-z,t-s)  - K_\alpha(y-z,t-s))f(z,s)|}{|x-y|^{N+2\beta}}dzdy  \\
				&\leq C(t-s)^{-\frac{N}{2\alpha p}-\frac{1}{2\alpha} } \| f \|_{L^\infty((\tau,T);L^p(\R^N))} \int_{B(x,\lambda)}|x-y|^{-N-2\beta+1}dy \\
				&+ C(t-s)^{-\frac{N}{2\alpha p}} \| f \|_{L^\infty((\tau,T);L^p(\R^N))} \int_{\R^N \setminus B(x,\lambda)} |x-y|^{-N-2\beta}dy \\
				&\leq C(t-s)^{-\frac{N}{2\alpha p} }[ (t-s)^{-\frac{1}{2\alpha}}\lambda^{1-2\beta}+\lambda^{-2\beta}] \| f \|_{L^\infty((\tau,T);L^p(\R^N))}.
		\end{aligned} \end{equation*}
		Minimizing over $\lambda>0$, we obtain
		\begin{align} \nonumber
			&\int_{\R^N} \int_{\R^N} \frac{|(K_\alpha(x-z,t-s)  - K_\alpha(y-z,t-s))f(z,s)|}{|x-y|^{N+2\beta}}dzdy \\ \label{Ka-7}
			&\leq C(t-s)^{-\frac{N}{2\alpha p} - \frac{\beta}{\alpha} } \| f \|_{L^\infty((\tau,T);L^p(\R^N))}.	
		\end{align}


			Therefore, we can remove the principal value in \eqref{est:exchange-3} and hence
			\begin{equation} \label{removePV-1} \begin{aligned}   &(-\Delta)^{\frac{\alpha}{2}} (S_\alpha(t-s)f(\cdot,s))(x) \\
					&= C_{N,\alpha}  \int_{\R^N} \int_{\R^N} \frac{[K_\alpha(x-z,t-s)  - K_\alpha(y-z,t-s)]f(z,s)}{|x-y|^{N+2\beta}}dzdy.
			\end{aligned} \end{equation}
			Next for any $0 \leq \tau<s<t<T$, $x \in \R^N$, by estimate \eqref{est:tilde-funda-1}, estimate \eqref{est:grad-tilde-funda-1}, we derive
			\begin{equation*}
				\int_{\R^N} \frac{|K_\alpha(x,t-s) - K_\alpha(y,t-s)|}{|x-y|^{N+2\beta}}dy \leq C(t-s)^{-\frac{N}{2\alpha p} - \frac{\beta}{\alpha} }.	
			\end{equation*}
			Hence one can remove the principal value in the definition of $(-\Delta)^{\frac{\alpha}{2}}K_\alpha(x,t-s)$, namely
			\begin{equation} \label{removePV-2}
				(-\Delta)^{\beta}K_\alpha(x,t-s) = C_{N,\alpha} \int_{\R^N} \frac{K_\alpha(x,t-s) - K_\alpha(y,t-s)}{|x-y|^{N+2\beta}}dy. 	
			\end{equation}
			Combining \eqref{removePV-1} and \eqref{removePV-2} yields \eqref{est:exchange-2}. Finally, \eqref{est:fracDKalpha-1} follows from \eqref{Ka-7}. 
		\end{proof}
		
		\begin{remark} \label{est:Holder-u} Assume $p \in [1,+\infty]$. From \eqref{Ka-4a}, \eqref{Ka-5}, we can derive that, for $0<\beta<1$,
			$$ \left\|  S_{i,\alpha}(t-\tau)\varphi \right \|_{C^\beta(\R^N)} \leq C(t-\tau)^{-\frac{N}{2\alpha p}- \frac{\beta}{2\alpha}}\| \varphi \|_{L^p(\R^N)}, \quad \forall t \in (\tau,T),
			$$	
			and, for $0<\beta<\min\{1,2\alpha- \frac{N}{p}\}$, 
			$$ \left\| \int_\tau^t S_{i,\alpha}(t-s)f(s)ds \right \|_{C^\beta(\R^N)} \leq C(t-\tau)^{1-\frac{N}{2\alpha p}- \frac{\beta}{2\alpha}}\| f \|_{L^\infty((\tau,T);L^p(\R^N))}, \quad \forall t \in (\tau,T].
			$$
		\end{remark}
		
		\subsection{Fractional heat semigroup in the H\"older setting}\label{appendix:Holder}

		\begin{lemma} \label{lem:interchange-3}
			Let $0<\beta < \frac{\gamma}{2} < \alpha < 1$, $0<\gamma<1$, $0 \leq \tau < T$ and $f \in L^\infty(Q_{\tau,T})$. 
			
			(i) For $\tau<s<t<T$, there hold 
			\begin{equation} \label{est:f-infty-Linfty}
				\|(-\Delta)^\beta S_\alpha(t-s)f(\cdot,s)\|_{L^\infty(\R^N)}
				\leq C\| f \|_{L^\infty(Q_{\tau,T})} (t-s)^{-\frac{\beta}{\alpha}}, 
			\end{equation}
			where $C$ depends only on $N,\alpha,\beta$, and 
			\begin{equation} \label{est:f-infty-Holder}
				\| (-\Delta)^\beta [S_\alpha(t-s)f(\cdot,s)] \|_{\dot{C}^{\gamma-2\beta}(\R^N)} \leq C\| f \|_{L^\infty(Q_{\tau,T})}(t-s)^{-\frac{\gamma}{2\alpha}},
			\end{equation}
			where $C$ depends only on $N,\alpha,\beta,\gamma$, $\dot{C}^{\gamma-2\beta}$ denotes the homogeneous H\"older space.
			
			(ii) For any $\tau<t<T$, \eqref{interchange-1} holds and
			\begin{equation} \label{est:f-infty-Linfty-Holder}
				\left\| (-\Delta)^\beta \int_\tau^t S_\alpha(t-s)f(\cdot,s)ds \right\|_{L^\infty((\tau,T);C^{\gamma-2\beta}(\R^N))} \leq C\| f \|_{L^\infty(Q_T)}((T-\tau)^{\frac{\alpha-\beta}{\alpha}} + (T-\tau)^{\frac{2\alpha-\gamma}{2\alpha}}),
			\end{equation}
			where $C$ depends only on $N,\alpha,\beta,\gamma$.
		\end{lemma}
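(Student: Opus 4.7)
The overall strategy is to leverage the convolution representation and pointwise kernel estimates that were already established for the fractional heat kernel, combined with the self-similarity of $K_\alpha$.

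\medskip
\noindent \textbf{Part (i).} The hypothesis $\beta<\gamma/2<\alpha<1$ with $\gamma<1$ forces $\beta<1/2$, so Lemma \ref{lem:interchange-2} applies with $p=\infty$. The first estimate \eqref{est:f-infty-Linfty} then follows immediately from \eqref{est:fracDKalpha-1} with $p=\infty$. For the Hölder seminorm estimate \eqref{est:f-infty-Holder}, I would exploit the convolution identity
\[
g(x) := (-\Delta)^\beta[S_\alpha(t-s)f(\cdot,s)](x) = \int_{\R^N}(-\Delta)^\beta K_\alpha(x-z,t-s)\, f(z,s)\,dz
\]
given by \eqref{est:exchange-2}. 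Together with the self-similar form of $K_\alpha$, the fractional Laplacian scales as $(-\Delta)^\beta K_\alpha(\cdot,t-s) = (t-s)^{-\frac{N+2\beta}{2\alpha}}\widetilde K^\beta_\alpha\bigl((t-s)^{-\frac{1}{2\alpha}}\cdot\bigr)$ with a profile $\widetilde K^\beta_\alpha$ that inherits from \eqref{est:tilde-funda-1}--\eqref{est:grad-tilde-funda-1} the decay $|\widetilde K^\beta_\alpha(z)|\lesssim (1+|z|)^{-(N+2\beta)}$ and $|\nabla \widetilde K^\beta_\alpha(z)|\lesssim(1+|z|)^{-(N+2\beta+1)}$. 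After the translation change of variable and setting $w:=(t-s)^{-\frac{1}{2\alpha}}(x-y)$, the task reduces to estimating $\int_{\R^N}|\widetilde K^\beta_\alpha(z)-\widetilde K^\beta_\alpha(z+w)|\,dz$. The mean value theorem gives a bound of order $|w|$ when $|w|\le 1$, while the triangle inequality gives the uniform bound of order $1$ when $|w|\ge 1$; interpolating these yields $\min\{|w|,1\}\lesssim |w|^{\gamma-2\beta}$ (valid since $0<\gamma-2\beta<1$). Undoing the scaling produces exactly the factor $(t-s)^{-\gamma/(2\alpha)}|x-y|^{\gamma-2\beta}$.

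\medskip
\noindent \textbf{Part (ii).} For the interchange identity \eqref{interchange-1}, I would follow the template of Lemma \ref{lem:interchange}: apply Hille's theorem for Bochner integrals to the map $s\mapsto (-\Delta)^\beta[S_\alpha(t-s)f(\cdot,s)]$ valued in $C^{\gamma-2\beta}(\R^N)$, which is closed under $(-\Delta)^\beta$. The required integrability near the singularity $s=t$ is secured by combining \eqref{est:f-infty-Linfty} and \eqref{est:f-infty-Holder}: the $L^\infty$ norm is $O((t-s)^{-\beta/\alpha})$ and the Hölder seminorm is $O((t-s)^{-\gamma/(2\alpha)})$, both of which are integrable in $s$ over $(\tau,t)$ because $\beta/\alpha<\gamma/(2\alpha)<1$ (here I use $\gamma<2\alpha$). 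Integrating these two bounds against $ds$ from $\tau$ to $t$ produces the two time exponents $(T-\tau)^{(\alpha-\beta)/\alpha}$ and $(T-\tau)^{(2\alpha-\gamma)/(2\alpha)}$ appearing in \eqref{est:f-infty-Linfty-Holder}.

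\medskip
\noindent \textbf{Main obstacle.} The pointwise $L^\infty$ bound in part~(i) is essentially routine in view of Lemma \ref{lem:interchange-2}. The delicate point is the Hölder estimate \eqref{est:f-infty-Holder}, and more precisely verifying that the profile $\widetilde K^\beta_\alpha$ still enjoys the integrable gradient decay $(1+|z|)^{-(N+2\beta+1)}$ that justifies the two-regime splitting. Once this is in hand, part (ii) is merely a careful bookkeeping of the time integrability, with the inequalities $\beta<\gamma/2$ and $\gamma<2\alpha$ ensuring that the singular weights in $s$ remain integrable up to $s=t$.
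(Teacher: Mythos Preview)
Your approach is correct but takes a different tack from the paper for the H\"older estimate \eqref{est:f-infty-Holder}. You work directly with the kernel $(-\Delta)^\beta K_\alpha$ and its self-similar profile $\tilde K_\alpha^\beta$, needing an integrable gradient bound on $\tilde K_\alpha^\beta$ that you correctly flag as the main obstacle (it is true, but not among the estimates \eqref{est:tilde-funda-1}--\eqref{est:fracfunda} recorded in the paper). The paper sidesteps this entirely by a factorization: it first shows $\|S_\alpha(t-s)f(\cdot,s)\|_{\dot C^{\gamma}(\R^N)} \le C\|f\|_{L^\infty(Q_{\tau,T})}(t-s)^{-\gamma/(2\alpha)}$ using only the elementary bounds \eqref{est:tilde-funda-1}--\eqref{est:grad-tilde-funda-1} on $\tilde K_\alpha$ itself (via exactly the same two-regime splitting and minimization you describe), and then invokes the standard continuity $(-\Delta)^\beta: \dot C^{\gamma}(\R^N)\to \dot C^{\gamma-2\beta}(\R^N)$ for $2\beta<\gamma$. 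This buys economy---no new kernel estimates beyond those already stated in Section~\ref{preliminaries}---at the cost of one abstract mapping property. Your route has the virtue of being fully self-contained at the kernel level once the gradient estimate on $\tilde K_\alpha^\beta$ is established. Part~(ii) is handled identically in both approaches, via Hille's theorem and integrating the time-singularities.
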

		\begin{proof}
			(i) It is easy to see that \eqref{est:f-infty-Linfty} follows from \eqref{est:Lp-DS}. Next we will prove \eqref{est:f-infty-Holder}. For $\tau<s<t<T$ and $x \neq y$, we have
			\begin{align} \nonumber
				&|S_\alpha(t-s)f(x,s)-S_\alpha(t-s)f(y,s)| \\  \label{est:Holder-00}
				&\leq \| f \|_{L^\infty(Q_{\tau,T})}\int_{\R^N} |K_\alpha(x-z,t-s)-K(y-z,t-s)|dz.
			\end{align}


			Let $\lambda>0$. By a similar argument as above, we can show that
			\begin{align} \nonumber
				\frac{1}{|x-y|^{\gamma}}	\int_{\R^N}	|(K_\alpha(x-z,t-s)  - K_\alpha(y-z,t-s))|dz  \nonumber
				\leq C(t-s)^{-\frac{1}{2\alpha}}\lambda^{1-\gamma} + C\lambda^{-\gamma}.
			\end{align}	
			By minimizing the right hand side over $\lambda>0$, we obtain
			\begin{align*} 
				\frac{1}{|x-y|^{\gamma}}	\int_{\R^N}	|K_\alpha(x-z,t-s)  - K_\alpha(y-z,t-s)|dz 
				\leq C(t-s)^{-\frac{\gamma}{2\alpha}}.
			\end{align*}	
			Plugging the above estimate into \eqref{est:Holder-00}, we derive
			$$\| S_\alpha(t-s)f(\cdot,s) \|_{\dot{C}^{\gamma}(\R^N)} \leq C\| f \|_{L^\infty(Q_{\tau,T})}(t-s)^{-\frac{\gamma}{2\alpha}}.$$ 
			This and the fact that $(-\Delta)^\beta: \dot{C}^{\gamma}(\R^N) \to \dot{C}^{\gamma-2\beta}(\R^N)$ is continuous yield \eqref{est:f-infty-Holder}. \medskip

			(ii) Adding estimates \eqref{est:f-infty-Linfty} and \eqref{est:f-infty-Holder} leads to
			\begin{equation*} 
				\| (-\Delta)^\beta S_\alpha(t-s)f(\cdot,s) \|_{C^{\gamma-2\beta}(\R^N)} \leq C\| f \|_{L^\infty(Q_{\tau,T})}[(t-s)^{-\frac{\gamma}{2\alpha}} + (t-s)^{-\frac{\beta}{\alpha}}].
			\end{equation*}	
			Since $\beta<\alpha$ and $\gamma <2\alpha$, it follows that, for $t \in (\tau,T)$,
			\begin{equation} \label{est:fracDS-Holder-3} 
				\| (-\Delta)^\beta S_\alpha(t-s)f(\cdot,s)\|_{L^1((\tau,t);C^{\gamma-2\beta}(\R^N))} \leq C\| f \|_{L^\infty(Q_{\tau,T})}((t-\tau)^{\frac{2\alpha-\gamma}{2\alpha}} + (t-\tau)^{\frac{\alpha-\beta}{\alpha}}).
			\end{equation}
			Thus in view of Hille's theorem for Bochner integrals, we obtain \eqref{interchange-1}. Estimate \eqref{est:f-infty-Linfty-Holder} follows from \eqref{interchange-1} and \eqref{est:fracDS-Holder-3}. 
		\end{proof}

		\begin{lemma} \label{lem:HolderHolder}
			Let $0 \leq \tau < T$ and $f \in L^\infty((\tau,T);C^{\gamma}(\R^N))$ for some $0<\gamma<1$. 
			
			(i) Assume $0<2\beta<\gamma<1$. Then for any $\tau<s<t<T$,  
			\begin{equation} \label{est:fracDSalpha-Holder}
				\| (-\Delta)^\beta [S_\alpha(t-s)f(\cdot,s)] \|_{C^{\gamma-2\beta}(\R^N)} \leq C\| f \|_{L^\infty((\tau,T);C^{\gamma}(\R^N))},
			\end{equation}
			where $C$ depends only on $N,\alpha,\beta,\gamma$.
			
			(ii) For any $\tau<s<t<T$, \eqref{interchange-1} holds and
			\begin{equation} \label{est:f-Holder-Linfty-Holder}
				\left\| (-\Delta)^\beta \int_\tau^t S_\alpha(t-s)f(\cdot,s)ds \right\|_{L^\infty((\tau,T);C^{\gamma-2\beta}(\R^N))} \leq C_{T-\tau}\| f \|_{L^\infty((\tau,T);C^{\gamma}(\R^N))},
			\end{equation}
			where $C$ depends only on $N,\alpha,\beta,\gamma$.
		\end{lemma}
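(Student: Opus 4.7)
The plan is to decouple the action of the semigroup from the action of the fractional Laplacian. The key observation is that because $K_\alpha(\cdot,t-s)\geq 0$ and $\int_{\R^N}K_\alpha(\cdot,t-s)\,dx=1$, convolution with $K_\alpha(\cdot,t-s)$ is a contraction on $C^\gamma(\R^N)$: for any $g\in C^\gamma(\R^N)$ and any $x,y\in\R^N$,
\[
|S_\alpha(t-s)g(x)-S_\alpha(t-s)g(y)|\leq\int_{\R^N}K_\alpha(z,t-s)|g(x-z)-g(y-z)|\,dz\leq [g]_{C^\gamma}|x-y|^\gamma,
\]
and similarly $\|S_\alpha(t-s)g\|_{L^\infty}\le\|g\|_{L^\infty}$. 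Applied to $g=f(\cdot,s)$, this gives $\|S_\alpha(t-s)f(\cdot,s)\|_{C^\gamma(\R^N)}\leq \|f(\cdot,s)\|_{C^\gamma(\R^N)}$ uniformly in $\tau<s<t<T$, with no singularity in $t-s$ (this is the gain compared to Lemma \ref{lem:interchange-3}, which only assumed $f\in L^\infty$).

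For part (i), it then remains to show that $(-\Delta)^\beta : C^\gamma(\R^N)\to C^{\gamma-2\beta}(\R^N)$ is bounded whenever $0<2\beta<\gamma<1$. I would prove this directly from the singular integral representation. For any $g\in C^\gamma(\R^N)$ and $x\in\R^N$, split
\[
(-\Delta)^\beta g(x)=C_{N,\beta}\int_{|y-x|\leq 1}\frac{g(x)-g(y)}{|x-y|^{N+2\beta}}dy+C_{N,\beta}\int_{|y-x|>1}\frac{g(x)-g(y)}{|x-y|^{N+2\beta}}dy,
\]
where the near part is convergent since $|g(x)-g(y)|\le [g]_{C^\gamma}|x-y|^\gamma$ and $\gamma>2\beta$, and the far part is controlled by $\|g\|_{L^\infty}$. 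This gives the $L^\infty$ bound $\|(-\Delta)^\beta g\|_{L^\infty}\leq C\|g\|_{C^\gamma}$. For the H\"older seminorm, comparing at two points $x,z$ with $r=|x-z|$, decompose both integrals over $|y-x|\leq 2r$ and $|y-x|>2r$; the near part is bounded using $[g]_{C^\gamma}$ and yields the factor $r^{\gamma-2\beta}$, while the far part is treated via a Taylor-type cancellation $|K(x-y)-K(z-y)|\lesssim r|y-x|^{-N-2\beta-1}$ for the kernel $K(w)=|w|^{-N-2\beta}$ combined with either $[g]_{C^\gamma}$ or $\|g\|_{L^\infty}$, again producing $r^{\gamma-2\beta}$. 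Combining with the contractivity established above, \eqref{est:fracDSalpha-Holder} follows.

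For part (ii), estimate \eqref{est:fracDSalpha-Holder} provides a uniform bound
\[
\sup_{s\in(\tau,t)}\|(-\Delta)^\beta[S_\alpha(t-s)f(\cdot,s)]\|_{C^{\gamma-2\beta}(\R^N)}\leq C\|f\|_{L^\infty((\tau,T);C^\gamma(\R^N))},
\]
so the map $s\mapsto (-\Delta)^\beta[S_\alpha(t-s)f(\cdot,s)]$ is Bochner integrable in $C^{\gamma-2\beta}(\R^N)$. The interchange \eqref{interchange-1} then follows from Hille's theorem applied to the closed linear operator $(-\Delta)^\beta$, in exactly the spirit of the analogous step in Lemmata \ref{lem:interchange}--\ref{lem:interchange-3}. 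Taking the $C^{\gamma-2\beta}$ norm, pulling it inside the integral and integrating the uniform bound from $\tau$ to $t$ yields \eqref{est:f-Holder-Linfty-Holder} with $C_{T-\tau}=C(T-\tau)$.

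The only non-routine step is the mapping property of $(-\Delta)^\beta$ between H\"older spaces. The mild subtlety is that one cannot simply differentiate $K_\alpha$ and convolve, since such an approach (as in Lemma \ref{lem:interchange-3}) introduces a factor $(t-s)^{-\beta/\alpha}$ that would be inconsistent with the asserted lack of temporal singularity in \eqref{est:fracDSalpha-Holder}; the point of the present lemma is precisely that the extra regularity $f\in C^\gamma$ is transferred to $S_\alpha(t-s)f(\cdot,s)$ without cost, and then absorbed by $(-\Delta)^\beta$ using $\gamma>2\beta$. Handling the far part of the singular integral carefully so that the Hölder estimate comes out uniform in $t-s$ is the only place where one must be attentive.
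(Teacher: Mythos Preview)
Your proposal is correct and follows essentially the same route as the paper: both arguments rest on (a) the contractivity of $S_\alpha(t-s)$ on $C^\gamma(\R^N)$ via the positivity and unit mass of $K_\alpha$, (b) the boundedness of $(-\Delta)^\beta:C^\gamma\to C^{\gamma-2\beta}$, and (c) Hille's theorem for the interchange in part (ii). The only cosmetic difference is that for the $L^\infty$ bound the paper combines the semigroup and the fractional Laplacian in a single double integral and optimizes the splitting radius to obtain the interpolated constant $\|f\|_{L^\infty}^{(\gamma-2\beta)/\gamma}\|f\|_{\dot C^\gamma}^{2\beta/\gamma}$, whereas you first pass to $g=S_\alpha(t-s)f(\cdot,s)\in C^\gamma$ and then split at a fixed radius; both yield \eqref{est:fracDSalpha-Holder}.
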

		
		\begin{proof} 
		(i)	First we estimate $L^\infty$-norm of $(-\Delta)^\beta [S_\alpha(t-s)f(\cdot,s)]$. We write
			\begin{align} \nonumber 
				&| (-\Delta)^{\beta}[S_\alpha(t-s)f(\cdot,s)](x)| \\ \label{est:fracDeltaSu0-1}
				&= C_{N,\alpha} \left|\mathrm{P.V.} \int_{\R^N}  \int_{\R^N} \frac{K_\alpha(z,t-s)(f(x-z,s)-f(y-z,s))}{|x-y|^{N+2\beta}} dz dy   \right|.
			\end{align}
			By using the assumption $f \in L^\infty((\tau,T);C^{\gamma}(\R^N))$, we have
			\begin{align*} 
				\int_{\R^N} K_\alpha(z,t-s)|f(x-z,s)-f(y-z,s)| dz  \leq C\| f \|_{L^\infty((\tau,T);\dot{C}^{\gamma}(\R^N))}  |x-y|^{\gamma}.
			\end{align*}
			We also obtain
			$\int_{\R^N} K_\alpha(z,t-s)|f(x-z,s)-f(y-z,s)|dz  \leq C\| f \|_{L^\infty(Q_{\tau,T})}$.
			By using a standard minimization argument as before, we derive
			\begin{align*} 
				\int_{\R^N}  \int_{\R^N} \frac{K_\alpha(z,t-s)|f(x-z,s)-f(y-z,s)|}{|x-y|^{N+2\beta}} dz dy \leq 	C \| f \|_{L^\infty(Q_{\tau,T})}^{\frac{\gamma-2\beta}{\gamma}} \| f \|_{L^\infty((\tau,T);\dot{C}^{\gamma}(\R^N))}^{\frac{2\beta}{\gamma}}.
			\end{align*}
			This and \eqref{est:fracDeltaSu0-1} imply
			\begin{equation} \label{est:fracDeltaSu0-2}
				\| (-\Delta)^{\beta}[S_\alpha(t-s)f(s)] \|_{L^\infty(\R^N)}  \leq C \| f \|_{L^\infty(Q_{\tau,T})}^{\frac{\gamma-2\beta}{\gamma}} \| f \|_{L^\infty((\tau,T);\dot{C}^{\gamma}(\R^N))}^{\frac{2\beta}{\gamma}},
			\end{equation}
			where $C$ depends only on $N,\alpha,\beta,\gamma$. Next, for any $\tau<s<t<T$ and $x \neq y$, we have
			\begin{align*} 
				|S_\alpha(t-s)f(x,s)-S_\alpha(t-s)f(y,s)|
				\leq C\| f \|_{L^\infty((\tau,T);\dot{C}^{\gamma}(\R^N))}|x-y|^{\gamma}. 
			\end{align*}
			Consequently, we obtain
			\begin{equation} \label{est:fracDSalpha-Holder-a}
				\| (-\Delta)^\beta [S_\alpha(t-s)f(\cdot,s)] \|_{\dot{C}^{\gamma-2\beta}(\R^N)} \leq C\| f \|_{L^\infty((\tau,T);\dot{C}^{\gamma}(\R^N))}.
			\end{equation}
			Combining \eqref{est:fracDSalpha-Holder-a} and \eqref{est:fracDeltaSu0-2} yields \eqref{est:fracDSalpha-Holder}.
			
			(ii) It follows from \eqref{est:fracDSalpha-Holder} that 
			\begin{equation} \label{est:f-Holder-Linfty-Holder-b}
				\| (-\Delta)^\beta S_\alpha(t-s)f(\cdot,s) \|_{L^1((\tau,t);C^{\gamma-2\beta}(\R^N))} \leq C_{T-\tau} \| f \|_{L^\infty((\tau,T);C^{\gamma}(\R^N))}.
			\end{equation}
			This allows to apply Hille's theorem for Bochner integrals to derive \eqref{interchange-1}. Estimate \eqref{est:f-Holder-Linfty-Holder} follows from \eqref{interchange-1} and \eqref{est:f-Holder-Linfty-Holder-b}.
		\end{proof}

		By using a similar argument as in the proof of Lemma \ref{lem:HolderHolder}, we can establish the following result:
		\begin{lemma} \label{lem:fracDeltaSphi-Holder}
			Assume $0<2\beta<\theta<1$,  $\varphi \in C^{\theta}(\R^N)$ and $0 \leq \tau <T$. Then for any $\tau<t<T$, 
			\begin{equation*} 
				\| (-\Delta)^{\beta}S_\alpha(t-\tau)\varphi \|_{C^{\theta-2\beta}(\R^N)} \leq C\| \varphi \|_{C^{\theta}(\R^N)},	
			\end{equation*}
		where $C$ depends only on $N,\alpha,\beta,\theta$.
		\end{lemma}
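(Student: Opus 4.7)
The plan is to mimic the proof of Lemma \ref{lem:HolderHolder}, with the time-independent datum $\varphi \in C^\theta(\R^N)$ playing the role previously played by $f(\cdot,s) \in L^\infty((\tau,T);C^\gamma(\R^N))$. Because $\varphi$ does not depend on time, every bound below will be uniform in $t-\tau$, as required by the statement.

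First I would establish the $L^\infty$ bound. By the same manipulation used in \eqref{est:fracDeltaSu0-1}, write
\[
(-\Delta)^\beta [S_\alpha(t-\tau)\varphi](x) = C_{N,\beta}\,\mathrm{P.V.}\int_{\R^N}\!\!\int_{\R^N}\frac{K_\alpha(z,t-\tau)(\varphi(x-z)-\varphi(y-z))}{|x-y|^{N+2\beta}}dz\,dy.
\]
The two complementary bounds
\[
\int_{\R^N} K_\alpha(z,t-\tau)|\varphi(x-z)-\varphi(y-z)|\,dz \le \|\varphi\|_{\dot{C}^\theta(\R^N)}|x-y|^\theta,
\]
which uses $\int K_\alpha(\cdot,t-\tau)\,dz=1$, and
\[
\int_{\R^N} K_\alpha(z,t-\tau)|\varphi(x-z)-\varphi(y-z)|\,dz \le 2\|\varphi\|_{L^\infty(\R^N)}
\]
are then inserted into the outer $y$-integral; splitting this integral at $|x-y|=\lambda$ and optimizing over $\lambda>0$ (exactly as in the derivation of \eqref{est:fracDeltaSu0-2}) produces
\[
\|(-\Delta)^\beta S_\alpha(t-\tau)\varphi\|_{L^\infty(\R^N)} \le C\|\varphi\|_{L^\infty(\R^N)}^{(\theta-2\beta)/\theta}\|\varphi\|_{\dot{C}^\theta(\R^N)}^{2\beta/\theta} \le C\|\varphi\|_{C^\theta(\R^N)},
\]
with $C$ depending only on $N,\alpha,\beta,\theta$.

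Second, I would estimate the $\dot{C}^{\theta-2\beta}$ seminorm. Since $\int_{\R^N} K_\alpha(z,t-\tau)\,dz =1$, for all $x\ne y$,
\[
|S_\alpha(t-\tau)\varphi(x) - S_\alpha(t-\tau)\varphi(y)| \le \int_{\R^N} K_\alpha(z,t-\tau)|\varphi(x-z)-\varphi(y-z)|\,dz \le \|\varphi\|_{\dot{C}^\theta(\R^N)}|x-y|^\theta,
\]
so $S_\alpha(t-\tau)\varphi \in \dot{C}^\theta(\R^N)$ with seminorm controlled by $\|\varphi\|_{\dot{C}^\theta(\R^N)}$. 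Combining this with the continuity of $(-\Delta)^\beta: \dot{C}^\theta(\R^N)\to \dot{C}^{\theta-2\beta}(\R^N)$ (which is a standard fact, and can alternatively be proved by repeating the minimization argument above with difference quotients) yields the desired seminorm bound, uniform in $t-\tau$. Adding this to the $L^\infty$ bound completes the proof. No genuine obstacle arises; the main routine task is the two-scale minimization for the $L^\infty$ estimate, which is verbatim the computation already carried out in Lemma \ref{lem:HolderHolder}.
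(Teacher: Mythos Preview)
Your proposal is correct and takes essentially the same approach as the paper: the paper itself provides no independent proof of Lemma~\ref{lem:fracDeltaSphi-Holder}, stating only that it follows by the same argument as Lemma~\ref{lem:HolderHolder}, and what you have written is precisely that argument specialized to the time-independent datum $\varphi$.
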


		\subsection*{Acknowledgements}
		The authors are supported by the OeAD Project No. CZ 03/2023 and the Mobility Project 8J23AT032. The work has been completed during the visits of B. T. to Masaryk University, and P.-T. N. to University of Graz, and the universities' hospitality is greatly acknowledged. B.T. is also supported by the FWF project number I-5213 (Quasi-steady-state approximation for PDE).
		
		\addcontentsline{toc}{section}{References}

	\end{document}